\newtheorem{theorem}{Theorem}[section]
\newtheorem{definition}[theorem]{Definition}
\newtheorem{example}[theorem]{Example}
\newtheorem{corollary}[theorem]{Corollary}
\newtheorem{lemma}[theorem]{Lemma}
\newtheorem{remark}[theorem]{Remark}
\newtheorem*{theorem*}{Theorem}
\newtheorem*{lemma*}{Lemma}
\newtheorem*{definition*}{Definition}
\DeclareMathOperator{\triv}{triv}
\DeclareMathOperator{\Det}{det}
\DeclareMathOperator{\Dim}{dim}
\DeclareMathOperator{\End}{End}
\DeclareMathOperator{\trace}{trace}
\DeclareMathOperator{\sgn}{sgn}
\DeclareMathOperator{\Ind}{Ind}
\DeclareMathOperator{\Hom}{Hom} 
\DeclareMathOperator{\Span}{span}
\DeclareMathOperator{\pr}{pr}
\DeclareMathOperator{\rad}{rad}
\DeclareMathOperator{\Lie}{Lie}
\DeclareMathOperator{\Id}{Id}
\title[Compact Schur-Weyl duality]{Compact Schur-Weyl duality and the Type B/C  VW-algebra}
\author{Kieran Calvert}             
\thanks{Department of Mathematics, University of Manchester, kieran.calvert@manchester.ac.uk}
\begin{document}

\begin{abstract} We define an extension of the VW-algebra, the type $B/C$ VW-algebra. This new algebra contains the hyperoctahedral group and it naturally acts on $\End_K(X \otimes V^{\otimes k})$ for Orthogonal and Symplectic groups. Thus we obtain a compact analogue of Schur-Weyl duality. We study functors $F_{\mu,k}$ from the category of admissible $O(p,q)$ or $Sp_{2n}(\mathbb{R})$ modules to representations of the type $B/C$ VW-algebra $\mathfrak{B}_k^\theta$. Thus providing a Akawaka-Suzuki-esque link between $O(p,q)$ (or $Sp_{2n}(\mathbb{R})$) and $\mathfrak{B}_k^\theta$.  Furthermore these functors take non spherical principal series modules to principal series modules for the graded Hecke algebra of type $D_k$, $C_{n-k}$ or $B_{n-k}$. With this we get a functorial correspondence between admissible simple $O(p,q)$ (or $Sp_{2n}(\mathbb{R})$) modules and graded Hecke algebra modules.  \end{abstract}

\maketitle                  

\tableofcontents            

\begin{section}{Introduction}
Let $G$ be an odd real orthogonal group or symplectic group, $G$ is $O(p,q)$ for $p +q=2n+1$ or $Sp_{2n}(\mathbb{R})$. Let $K$ denote a maximal compact subgroup of $G$. Let $\mathfrak{g}_0$ be the real Lie algebra of $G$. Define its complexification $\mathfrak{g} = \mathfrak{g}_0 \otimes_\mathbb{R} \mathbb{C}$.  Let $X$ be an admissible $G$-module and let $V$ be the defining matrix module of the linear group $G$.  The papers \cite{AS98,We88,CDM06,BS12,ES14,ES16} study variants of the $\mathbb{C}$-algebra $\End_G(X \otimes V^{\otimes k})$ of operators on $X \otimes V^{\otimes k}$ commuting with $G$.  For $G= Sp_{2n}(\mathbb{R}), O(p,q)$ there is a homomorphism  of the VW-algebra or degenerate BMW algebra \cite{ES14,DRV12} to $\End_G( X \otimes V^{\otimes k})$.  In this paper we focus on the larger algebra of operators which commute with $K$: 
$$\End_K(X \otimes V^{\otimes k}).$$
We define an extension of the VW-algebra, $\mathfrak{B}_k^\theta$, by operators related to the Cartan involution $\theta$ of $G$. This new algebra $\mathfrak{B}_k^\theta$ acts on $X \otimes V^{\otimes k}$ and commutes with the action of $K$. It is also an extension of the Cyclotomic Brauer algebra, which is unsurprising since the Author has shown \cite{C20} that the cyclotomic Brauer algebra acts on $\End_K(V^{\otimes k})$. It is the analogue of the VW-algebra for operators commuting with $K$. The extension contains the Weyl group of type $B /C$, the hyperoctahedral group. 
This new algebra's module category is a natural image for the functors defined by Ciubotaru and Trapa \cite{CT11}:
 $$F_{\mu,k}(X) = \Hom_K(\mu, X\otimes V^{\otimes k}).$$
 We show that the functors $F_{\mu,k}$ take the category of admissible $O(p,q)$ or $Sp_{2n}(\mathbb{R})$-modules to $\mathfrak{B}_k^\theta$-modules. Unlike previous functors, for $G =O(p,q)$ or $ Sp_{2n}(\mathbb{R})$, both categories are related to the hyperoctahedral group. 
Let $G = KAN$ be the Iwasawa decomposition of $G$, and $P = MAN$ be the minimal parabolic subgroup. For  characters $\delta$ of $M$ and $e^\nu$ of $A$, the principal series representation $X_{\delta}^\nu$ (Definition \ref{principalseriesdef}) is:
$$X_{\delta}^\nu = \Ind_{MAN}^G (\delta \otimes e^\nu \otimes 1).$$
For split real orthogonal or symplectic groups this covers all of the principal series modules. 
When $G =O(p,q)$ or $ Sp_{2n}(\mathbb{R})$ then $M = (\mathbb{Z}_2)^n$ or $O(p-q) \times (\mathbb{Z}_2)^q$. Denote the character of $M$ which is $triv$ (or $\det$) on $O(p-q)$, $-1$ on the first $k$ generators and $1$ on the remaining $n-k$ or $q-k$ by $\delta^k_{\triv}$ (resp. $\delta^k_{\det}$). For $Sp_{2n}(\mathbb{R})$ we drop the subscript $\det$ and $\triv$. The graded Hecke algebra $\mathbb{H}_k(c)$ (Definition \ref{Heckedef}) is the graded Hecke algebra associated to the hyperoctahedral group $W(B_k)$ with a certain parameter function related to $c \in \mathbb{R}$. 
For $G = Sp_{2n}(\mathbb{R})$, the functors $F_{\triv,k}$ and $F_{\det,n-k}$ take principal series modules $X_{\delta^k}^\nu$ to principal series modules for the graded Hecke algebra $\mathbb{H}_k(0)$ and $\mathbb{H}_{n-k}(1)$ respectively. 
For $G= O(p,q)$ the functors $F_{\triv \otimes \det,k}$ and $F_{\det \otimes \triv,q-k}$ take principal series modules $X_{\delta^k_{\triv}}^\nu$ to principal series modules for the graded Hecke algebra $\mathbb{H}_k(0)$ and $\mathbb{H}_{q-k}(1)$ respectively. A similar result holds for $X_{\delta^k_{\det}}^\nu$ and functors $F_{\triv \otimes \triv,k}$ and $F_{\det \otimes \det,q-k}$.
Given a particular character $\delta$ of $M$ we associate to it $K$-characters $\mu$, and $\underline{\mu}$ (Table \ref{muvalues}) with scalars $c_\mu$ and $c_{\underline{\mu}}$  (Table \ref{muconstants}).
We prove that functors related to $\mu$ and $c_\mu$ take principal series representations to principal series representations.  Thus we have defined a link between principal series of split real orthogonal or symplectic groups with principal series of certain graded Hecke algebras.

\begin{restatable*}{theorem2}{principaltoprincipalthm}For $G = Sp_{2n}(\mathbb{R})$ or $O(p,q)$ $p+q = 2n+1$, the module $F_{\mu,k}( X_{\delta}^\nu)$  is isomorphic to the $\mathbb{H}_k(c_\mu)$ principal series module
$$X(\nu_k) = \Ind_{S(\mathfrak{a}_k)}^{\mathbb{H}_k(c_\mu)} \nu_k.$$
The module $F_{\underline{\mu},n-k}( X_{\delta}^\nu)$  is isomorphic to the $\mathbb{H}_{n-k}(c_{\underline{\mu}})$ principal series module
$$X(\bar{\nu}_{n-k}) = \Ind_{S(\bar{\mathfrak{a}}_{n-k})}^{\mathbb{H}_k(c_{\underline{\mu}})} \bar{\nu}_{n-k}.$$
\end{restatable*}

 This extends the results of Ciubotaru and Trapa \cite{CT11} to non-spherical principal series modules. Importantly, if $G$ is a split real orthogonal or symplectic group, we can describe the Hecke algebra module of the image of every principal series modules resulting from functors $F_{\mu,k}$ and $F_{\underline{\mu},n-k}$. Furthermore using Casselman's subrepresention theorem, for these split groups we have a correspondence of irreducible Harish-Chandra modules of $G$ and graded Hecke algebra modules. 
\begin{restatable*}{theorem2}{splitthm} 
Let $G$ be $O(n+1,n)$ or $Sp_{2n}(\mathbb{R})$, then $G$ is split. Let $X$ be an irreducible $G$-module. Let $X_\delta^\nu$ be a principal series representation that contains $X$, then the $\mathfrak{B}_k^\theta$ and $\mathfrak{B}_{n-k}^\theta$-modules 
$$F_{\mu,k}(X) \text{ and } F_{\underline{\mu},n-k}(X)$$
are submodules of the $\mathbb{H}_k(c_\mu)$ and $\mathbb{H}_{n-k}(c_{\underline{\mu}})$-modules
$$X(\nu_k) \text{ and } X(\bar{\nu}_{n-k}).$$ 
\end{restatable*}

We define two anti-involutions on $\mathfrak{B}_k^\theta$ which descend to the usual anti-involutions on the graded Hecke algebra \cite{BC15}. Furthermore we show that if $X$ is a Hermitian (resp. unitary) module of $G= Sp_{2n}(\mathbb{R})$ then the image of $X$ under the functor $F_{\mu,k}$ is a Hermitian (resp. unitary) module for $\mathfrak{B}_k^\theta[m_0,m_1]$. 
We also show that the Langlands quotient is preserved.
\begin{restatable*}{theorem2}{langlandsquotientthm} \label{langlandsquotientpreserved} Let $X_{\delta}^\nu$ be a principal series module for $G =O(p,q)$ or $Sp_{2n}(\mathbb{R})$. The Langlands quotient $\overline{X_{\delta}^\nu} = X_{\delta}^\nu/\rad\langle ,\rangle_{X_{\delta}^\nu}$ is mapped by $F_{\mu,k}$, to the Langlands quotient of the $\mathbb{H}_k(c_\mu)$-module, $\overline{X(\nu_k)} = X(\nu_k) /\rad\langle ,\rangle_{X_{\nu_k}}$. Similarly, $X_{\delta}^\nu$ is mapped by $F_{\underline{\mu},n-k}$, to the $\mathbb{H}_{n-k}(c_{\underline{\mu}})$-module $\overline{X(\bar{\nu}_{n-k})}$.
\end{restatable*}

We then give a non-unitary test for principal series modules.

\begin{restatable*}{theorem2}{nonunitarytestthm}[Non-unitary test for principal series modules] If either $\overline{X(\nu_k)}$ or $\overline{X(\bar{\nu}_{n-k})}$ are not unitary, as $\mathbb{H}_k(c_\mu)$ and $\mathbb{H}_{n-k}(c_{\underline{\mu}})$-modules, then the Langlands quotient of the minimal principal series  module $\overline{X_{\delta^k}^\nu}$, for $G = O(p,q)$ or $ Sp_{2n}(\mathbb{R})$ is not unitary.   
\end{restatable*}

This result gives a functorial result similar to the nonunitarity criterion proved by Barbasch, Pantano, Paul and Salamanca-Riba \cite{BP04,PPS10}.
We also obtain a non-unitary test for any Harish Chandra module; in the split case one could check unitarity of Hecke algebra modules however in the non-split case one would have to work with type $B/C$ Brauer algebra modules.

\begin{restatable*}{theorem2}{nonunitarybrauerthm}[Non-unitary test for Harish-Chandra modules] Let $X$ be a Harish Chandra module. 
For $G = Sp_{2n}(\mathbb{R})$ or $O(p,q)$ $p+q = 2n+1$, if for any character $\mu$ and $k  =1,..,n$ the $\mathfrak{B}_k^\theta$-module $\overline{F_{\mu,k}(X) }$ is not unitary, then the Langlands quotient $\overline{X}$ of $X$ is not a  unitary $G$-module. 
In the case when $G$ is split then $X$  is a subrepresentation of $X_\delta^\nu$ and $F_{\mu,k}(X)$, $F_{\underline{\mu},n-k}(X)$ are Hecke algebra modules. In this case, if either $\overline{F_{\mu,k}(X)}$, $\overline{F_{\underline{\mu},n-k}(X)}$ is not unitary as a Hecke algebra module then $\overline{X}$ is not unitary as a $G$-module.
\end{restatable*}

In Section \ref{brauer}, we define the type $B/C$ VW-algebra $\mathfrak{B}_k^\theta$ and show that it acts on $X \otimes V^{\otimes k}$ and commutes with the action of $K$.  Section \ref{quotientsofbrauer} defines particular quotients of $\mathfrak{B}_k^\theta$ isomorphic to the graded Hecke algebras $\mathbb{H}_k(c)$. In Section \ref{functorssection}, we introduce the functors, defined in \cite{CT11}, $F_{\mu,k}: \mathcal{HC}(G) \to \mathfrak{B}_n^\theta$-mod. These functors naturally create $\mathfrak{B}_k^\theta$-modules. In Section \ref{restrictedfunctors}, we show that the functors restricted to principal series modules define Hecke algebra modules. Section \ref{isoclassofX} describes the isomorphism classes of $F_{\mu,k}(X_{\delta^k}^\nu)$ and   $F_{\underline{\mu},n-k}(X_{\delta^k}^\nu)$ as principal series modules of graded Hecke algebras $\mathbb{H}_k(c_\mu)$ and $\mathbb{H}_{n-k}(c_{\underline{\mu}})$. In Section \ref{hermitian}, we prove that functors $F_{\mu,k}$ preserve unitarity and invariant Hermitian forms.

\end{section}

\begin{section}{Preliminaries}
Throughout this paper we fix the following notation.
Let $G$ be $O(p,q)$, $p+q=2n+1$ or $Sp_{2n}(\mathbb{R})$. Let $\mathfrak{g}_0$ be its Lie algebra, with complexification $\mathfrak{g}= \mathfrak{g}_0\otimes_\mathbb{R} \mathbb{C}$. We uniformly denote a real Lie algebra by $\mathfrak{g}_0$, for a complex Lie algebra we drop the subscript. We fix a Cartan involution $\theta$ of $\mathfrak{g}_0$ and extend to $\mathfrak{g}$, let $\Theta$ be the corresponding involution of $G$. A maximal compact subgroup of $G$ is $K$, the fixed space of $\Theta$. The Lie algebra $\mathfrak{g}_0$ decomposes as $\mathfrak{k}_0 \oplus \mathfrak{p}_0$. The subspace  $\mathfrak{p}_0$ is the $-1$ eigenspace of $\theta$, the subalgebra $\mathfrak{k}_0$ is the $+1$ eigenspace of $\theta$ and the Lie algebra of $K$. Similarly, $\mathfrak{g} = \mathfrak{k} \oplus \mathfrak{p}$.  Let $\mathfrak{a}_0$ be a maximal commutative Lie subalgebra of $\mathfrak{p}_0$. 
Let $M$ be the centralizer of $\mathfrak{a}_0$ in $K$ under the adjoint action. We have $\Lie(M) =\mathfrak{m}_0$. 

\begin{definition} For $G$ equal to $O(p,q)$ or  $Sp_{2n}(\mathbb{R})$ write $V_0$ for the defining matrix module. That is $\rho : G \to GL(V_0)$ is the injection defining $G$ as a linear group. Write $V = V_0\otimes_\mathbb{R} \mathbb{C}$ for the complexification of $V_0$. 
\end{definition} 
If $G=Sp_{2n}(\mathbb{R})$ then $V=\mathbb{C}^{2n}$ and if $G =  O(p,q)$ then $V = \mathbb{C}^{2n+1}$. 
When $G = Sp_{2n}(\mathbb{R})$, let $e_1,...,e_{2n}$  be the standard matrix basis of $V$, then define a new basis $f_i = e_i + e_{n+i}$ for $i = 1,..,n$ and $f_i' = e_i - e_{n+i}$  for $i = 1,..,n$. We also label $f_i$ by $f_i^1$ and $f_i'$ by $f_i^{-1}$. When $G = O(p,q)$ then $V$ has basis $e_1,...,e_{2n+1}$, we let $f_i = e_{p-i+1} + e_{p+i}$ and $f_i' = e_{p-i+1} - e_{p+i}$.

 Following \cite[Section 1.1]{L89}, let $\{R, X, \hat{R}, \hat{X},\Delta\}$ be root datum where $R$ is the set of roots, $\hat{R}$ is the set of coroots and $X$ and $\hat{X}$ are free groups that contain $R$ and $\hat{R}$ respectively. There is a perfect pairing $\langle, \rangle$ between $X$ and $\hat{X}$ which defines a pairing between $R$ and $\hat{R}$. The simple roots $\Delta$ are a subset of $R$. Let $\mathfrak{t}$ equal the complexification of $X$, and similarly $\hat{\mathfrak{t}}$ is the complexification of $\hat{X}$.

The Lie algebra $\mathfrak{g}$ decomposes  as $\mathfrak{k} \oplus \mathfrak{p}$. Let $\mathfrak{a}$ be a maximal abelian Lie subalgebra of $\mathfrak{p}$. The restricted roots $\Sigma$ of $\mathfrak{g}$ are given by the eigenvalues of $\mathfrak{a}$ acting on $\mathfrak{g}$. The nilpotent Lie subalgebra $\mathfrak{n}$ is  the sum of positive root spaces of the restricted roots of $\mathfrak{a}$.    
\begin{definition}{\cite[Proposition 6.46]{K96}, \cite{I49}} The Iwasawa decomposition of the complex vector space $\mathfrak{g}$ is 
$$\mathfrak{g} = \mathfrak{k}\oplus \mathfrak{a}\oplus \mathfrak{n}.$$
The Iwasawa decomposition of $G$ \cite[Theorem 6.46]{K96} is 
$$G = KAN.$$  
\end{definition}

Let $M$ be the centralizer of $\mathfrak{a}$ in $K$ and denote by $N_K(\mathfrak{a})$ the normalizer of $\mathfrak{a}$ in $K$.  Let $\mathfrak{m}_0$ be the Lie algebra of $M$ with complexification $\mathfrak{m}$. The Weyl group associated to $G$ is the group
$$W_G =  N_K(\mathfrak{a})/M.$$

\begin{example} \label{symplecticexample2}

For $G= Sp_{2n}(\mathbb{R})$, a maximal abelian  subalgebra $\mathfrak{a}$ of $\mathfrak{p}$ is
$$\mathfrak{a} =\left \{\begin{bmatrix} 0 & B \\B & 0\end{bmatrix}:  B \text{ is diagonal}, B \in \mathfrak{gl}_n(\mathbb{C})\right\}.$$
The algebra $\mathfrak{a}$ has dimension $n$, this is the real rank of $Sp_{2n}(\mathbb{R})$. 
Let $E_{i,j}$ be the matrix with $1$ in the $(i,j)$ position and zero elsewhere. Let $k \in \{0,...,n\}$ The subspace $\mathfrak{a}_k$ is the span of $E_{i,n+i} + E_{n+i,i}$ for $i = 1,...,k$. 
 The subspace $\bar{\mathfrak{a}}_{n-k}\subset \mathfrak{a} $ is the span of the vectors $E_{k+i,n+k+i} + E_{n+k+i,k+i}$ for $i = 1,...,n-k$. Note that 
$$\mathfrak{a} = \mathfrak{a}_k \oplus \bar{\mathfrak{a}}_{n-k}.$$ 
For $G = SO(p,q)$ there is a similar decomposition of $\mathfrak{a}$ into  $k$ dimensional and $q-k$ dimensional subspaces, which we label by $\mathfrak{a}_k$ and $\bar{\mathfrak{a}}_{q-k}$. 
\end{example}

\begin{definition}\label{casimir} Given a finite dimensional complex Lie algebra $\mathfrak{g}$ with basis $B$ and dual basis $B^*$ with respect to the Killing form, we define the Casimir element in the enveloping algebra $U(\mathfrak{g})$ to be 
$$C^\mathfrak{g} =\sum_{b\in B} bb^* \in U(\mathfrak{g}).$$
For a subalgebra $\mathfrak{h} \subset \mathfrak{g}$ we denote the Casimir element of $\mathfrak{h}$ in $\mathfrak{g}$ by $C^\mathfrak{h} = \sum_{b \in B \cap \mathfrak{h}} b b^*$ where the basis $B$ is taken such that $B \cap \mathfrak{h}$ is a basis of $\mathfrak{h}$. 
\end{definition}

\end{section}

\begin{section}{Brauer Algebras}\label{brauer}
For a given $\mathfrak{g}$-module $X$ and the matrix module $V$, the endomorphism ring $\End_\mathbb{C}(X\otimes V ^{\otimes k})$ has been thoroughly studied. Most attention (\cite{We88,CDM06,DDH08,BS12,ES14}) has been on understanding the subalgebra commuting with $G$: $$\End_G(X \otimes V^{\otimes k}).$$
In the case of $\mathfrak{g} = \mathfrak{gl}_n$, $\End_{\mathfrak{gl}_n}(X \otimes V^{\otimes k})$ has a map from the graded Hecke algebra associated to the symmetric group \cite{AS98}. However , with $\mathfrak{g} = \mathfrak{so}_{2n+1},$  the relevant algebra is the VW-algebra with parameter $n$. With $\mathfrak{g}=\mathfrak{sp}_{2n}$, the corresponding algebra is the VW-algebra with parameter  $-n$.

In this section, we define the type $B/C$ Brauer algebra as an extension of the VW-algebra. We endow it with a natural action on $X \otimes V ^{\otimes k}$ and prove that it commutes with the action of $K$.  The degenerate BMW algebra is a quotient of the VW-algebra. We choose not to use the BMW algebra \cite{DRV12} as we are fundamentally interested in resulting graded Hecke algebra modules, the quotients to Hecke algebras defined in Section \ref{quotientsofbrauer} annihilate the difference between the VW-algebra and the degenerated BMW algebra.

\begin{definition} \cite{B37} The rank $k$ Brauer algebra $B_k[m]$, with parameter $m\in \mathbb{C}$, is the associative $\mathbb{C}$-algebra generated by elements $t_{i,i+1}$ and $e_{i,i+1}$ for $i =1,...,k-1$, subject to the conditions:
$$\text{ the subalgebra generated by } t_{i,i+1} \text{ is isomorphic to } \mathbb{C}[S_n],$$
$$e_{i,i+1}^2 = me_{i,i+1}, $$
$$t_{i,i+1} e_{i,i+1} = e_{i,i+1} t_{i,i+1} = e_{i,i+1},$$
$$t_{i,i+1}t_{i+1,i+2} e_{i,i+1}t_{i+1,i+2}t_{i,i+1} = e_{i+1,i+2},$$
$$[t_{i,i+1},e_{j,j+1}] = 0 \text{ for } j \neq i,i+1.$$
\end{definition}

\begin{definition}\label{affinebraueralgebradef} Let $U$ be a vector space with basis $z_1,...,z_k$.   The rank $k$ VW-algebra $\mathfrak{B}_k[m]$, with parameter $m\in \mathbb{C}$ is as a vector space equal to $$\mathfrak{B}_k[m] \cong B_k[m] \otimes S(U).$$ The multiplication satisfies the following conditions:
$$t_{i,i+1}z_i - z_{i+1}t_{i,i+1} = 1 + e_{i,i+1} ,$$
$$[t_{i,i+1}, z_j ] = 0, j \neq i,i+1,$$
$$e_{i,i+1}(z_i+z_{i+1}) = 0 =(z_i+z_{i+1}) e_{i,i+1},$$
$$[e_{i,i+1}, z_j ] = 0, j \neq i,i+1,$$
$$[z_i,z_j] =0,$$
$$e_{12}z_1^le_{12} = W_l e_{12} \text{ for  constants } w_l \in \mathbb{C},$$ 
 the subalgebra generated by $t_{i,i+1}$, $e_{i,i+1}$ is isomorphic to $B_k[m]$.

\end{definition}

 Let us consider $X$ and $V$ as $U(\mathfrak{g})$-modules then $X \otimes V^{\otimes k}$ has a $U(\mathfrak{g})^{\otimes k+1}$-module structure. 
We define operators that form the action of the Brauer algebra. 

\begin{definition} Given the action of $U(\mathfrak{g})^{\otimes k+1}$ on $X \otimes V^{\otimes k}$ we write $(g)_i$ for the action of $g$ on the $i+1^{st}$ tensor in $U(\mathfrak{g})^{\otimes k+1}$,
$$(g)_i = \overbrace{ id \otimes ...\otimes id }^{i \text{ times}} \otimes g \otimes\overbrace{id\otimes ...\otimes id}^{k-i \text{ times}}.$$
\end{definition}
By construction we start counting from zero. Hence $(g)_0 = g \otimes id\otimes...\otimes id \in U(g)^{\otimes k+1}$.

\begin{definition}\label{Omega} Fix a basis $B$ of $\mathfrak{g}$  such that $B = (B \cap \mathfrak{k}) \bigcup (B \cap \mathfrak{p})$. Let $B^*$ be the dual basis with respect to the Killing form of $\mathfrak{g}$. For $0\leq i < j \leq k,$ define $\Omega_{ij}$ to be the operator
$$\Omega_{ij} = \sum_{b\in B}(b)_i\otimes (b^*)_j \in U(\mathfrak{g})^{\otimes k+1}.$$
Similarly we define $\Omega^\mathfrak{k}_{ij}$ and $\Omega^\mathfrak{p}_{ij}$ as
$$\Omega^\mathfrak{k}_{ij} = \sum_{b\in B\cap \mathfrak{k}}(b)_i\otimes (b^*)_j \in U(\mathfrak{g})^{\otimes k+1},$$
$$\Omega^\mathfrak{p}_{ij} = \sum_{b\in B\cap \mathfrak{p}}(b)_i\otimes (b^*)_j \in U(\mathfrak{g})^{\otimes k+1}.$$

\end{definition} 

\begin{lemma} The operators $\Omega_{ij}$,$\Omega^\mathfrak{k}_{ij}$ and $\Omega^\mathfrak{p}_{ij}$ are independent of the choice of basis of $\mathfrak{g}$, $\mathfrak{k}$ and $\mathfrak{p}$ respectively.\end{lemma}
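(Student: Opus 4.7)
The plan is to reduce the claim to a standard fact about dual bases with respect to a non-degenerate bilinear form, and then verify that the Killing form remains non-degenerate when restricted to $\mathfrak{k}$ and $\mathfrak{p}$.

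First I would observe the following general principle. If $W$ is a finite dimensional vector space equipped with a non-degenerate symmetric bilinear form $\beta$, and $\{c_i\}$, $\{c_i^*\}$ are dual bases (so $\beta(c_i, c_j^*) = \delta_{ij}$), then the element $\sum_i c_i \otimes c_i^* \in W \otimes W$ is independent of the choice of basis. The change-of-basis calculation is routine: if $\{d_i\}$ is another basis with $d_i = \sum_j A_{ij} c_j$, then its $\beta$-dual basis is $d_i^* = \sum_j (A^{-T})_{ij} c_j^*$, and substitution gives $\sum_i d_i \otimes d_i^* = \sum_{j,k}\bigl(\sum_i A_{ij}(A^{-T})_{ik}\bigr) c_j \otimes c_k^* = \sum_j c_j \otimes c_j^*$. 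Embedding into $U(\mathfrak{g})^{\otimes k+1}$ via the slot assignments $(-)_i, (-)_j$ respects this equality.

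Next I would apply this principle with $\beta$ equal to the Killing form $K$ of $\mathfrak{g}$. Since $\mathfrak{g}$ is semisimple, $K$ is non-degenerate and so the expression $\sum_{b \in B} (b)_i \otimes (b^*)_j$ defining $\Omega_{ij}$ is basis-independent. For $\Omega^\mathfrak{k}_{ij}$ and $\Omega^\mathfrak{p}_{ij}$, the same argument applies once we know that $K$ restricts non-degenerately to $\mathfrak{k}$ and to $\mathfrak{p}$ separately.

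To verify this last point, recall that $\theta$ is an automorphism of $\mathfrak{g}$, so the Killing form is $\theta$-invariant: $K(\theta X, \theta Y) = K(X, Y)$. If $X \in \mathfrak{k}$ and $Y \in \mathfrak{p}$, then
\[
K(X,Y) = K(\theta X, \theta Y) = K(X, -Y) = -K(X,Y),
\]
hence $K(X,Y) = 0$. Therefore $\mathfrak{k}$ and $\mathfrak{p}$ are $K$-orthogonal, and since $\mathfrak{g} = \mathfrak{k} \oplus \mathfrak{p}$ with $K$ non-degenerate on $\mathfrak{g}$, the restrictions $K|_\mathfrak{k}$ and $K|_\mathfrak{p}$ are themselves non-degenerate. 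Applying the general principle to $(\mathfrak{k}, K|_\mathfrak{k})$ and $(\mathfrak{p}, K|_\mathfrak{p})$ yields the basis-independence of $\Omega^\mathfrak{k}_{ij}$ and $\Omega^\mathfrak{p}_{ij}$. There is no real obstacle here; the only mild subtlety is the compatibility of the chosen basis $B$ of $\mathfrak{g}$ with the Cartan decomposition, which is precisely the hypothesis $B = (B \cap \mathfrak{k}) \cup (B \cap \mathfrak{p})$ made in Definition \ref{Omega}, ensuring that restricting to $B \cap \mathfrak{k}$ and $B \cap \mathfrak{p}$ really does produce bases of $\mathfrak{k}$ and $\mathfrak{p}$ whose $K$-duals lie in $\mathfrak{k}$ and $\mathfrak{p}$ respectively.
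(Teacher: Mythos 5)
Your proof is correct, but it takes a genuinely different route from the paper. The paper's argument writes $\Omega_{12} = \Delta(C^\mathfrak{g}) - C^\mathfrak{g}\otimes 1 - 1\otimes C^\mathfrak{g}$, appeals to basis-independence of the Casimir, repeats this for $\mathfrak{k}$, and then disposes of $\Omega^\mathfrak{p}_{ij}$ by noting $\Omega^\mathfrak{p}_{ij} = \Omega_{ij} - \Omega^\mathfrak{k}_{ij}$ so that it is a difference of two basis-independent operators. You instead give a self-contained change-of-basis computation showing that $\sum_i c_i \otimes c_i^*$ is basis-independent whenever the pairing used to define the dual basis is a non-degenerate symmetric bilinear form, then verify that the Killing form of $\mathfrak{g}$ restricts non-degenerately to $\mathfrak{k}$ and to $\mathfrak{p}$ via $\theta$-invariance of the Killing form. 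One advantage of your route is that it makes explicit a fact the paper leaves implicit in its ``Similarly replacing $\mathfrak{g}$ with $\mathfrak{k}$'': the dual basis in Definition \ref{Omega} is taken with respect to the Killing form of $\mathfrak{g}$, not the Killing form of $\mathfrak{k}$, so one really does need to know that $K|_\mathfrak{k}$ is non-degenerate, which is exactly your orthogonality-of-$\mathfrak{k}$-and-$\mathfrak{p}$ step. Your argument also treats $\Omega^\mathfrak{p}_{ij}$ directly, whereas the paper handles it only indirectly as a difference; both work, but your version makes the symmetry between the $\mathfrak{k}$ and $\mathfrak{p}$ cases more transparent. The paper's approach is shorter if one is willing to quote the standard basis-independence of the Casimir and the comultiplication identity; yours is more elementary and closes a small gap.
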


\begin{proof}  It is sufficient to prove the statement for $\Omega_{12} \in U(\mathfrak{g})^2$. Let  $C^\mathfrak{g} = \sum_{b \in B} b b^* \in U(\mathfrak{g})$ be the Casimir element and $\Delta:U(\mathfrak{g}) \to U(\mathfrak{g}) \times U(\mathfrak{g})$ denote comultiplication. The operator $\Omega_{12}$ can be written as:
$$\Omega_{12} = \Delta (C^\mathfrak{g} ) - C^\mathfrak{g}  \otimes 1 - 1 \otimes C^\mathfrak{g} .$$
 The Casimir element $C^\mathfrak{g}$ is independent of the choice of basis therefore $\Omega_{12}$ is also independent. Similarly replacing $\mathfrak{g}$ with the Lie subalgebra $\mathfrak{k}$, $\Omega^\mathfrak{k}_{12}$ is independent of choice of basis. Finally $\Omega^\mathfrak{p}_{12}$ is independent as it is the difference of the other two,
$$\Omega_{ij} - \Omega^\mathfrak{k}_{ij} =\Omega^\mathfrak{p}_{ij}.$$
\end{proof}

Let  the symmetric group on $k$ elements $S_k$ act on $X \otimes V^{\otimes k}$ by permuting the factors of $V$. Explicitly the simple reflection $s_{i,i+1}$ acts by 
$$s_{i,i+1} (x_0\otimes v_1\otimes..\otimes v_i \otimes v_{i+1} \otimes ....\otimes v_k) =x_0\otimes v_1\otimes..\otimes v_{i+1} \otimes v_i \otimes ....\otimes v_k.$$

\begin{lemma}\label{decompofVtensorV}  If $\mathfrak{g} = \mathfrak{sp}_{2n}$ or $\mathfrak{so}_{2n+1}$ then $V \otimes V$ decomposes as 
$$\Lambda^2 V \oplus S^2V/1 \oplus 1 \text{ for } \mathfrak{so}_{2n+1},$$
$$\Lambda^2 V / 1 \oplus S^2 V \oplus 1 \text{ for } \mathfrak{sp}_{2n}.$$
Here $1$ denotes the trivial module of $\mathfrak{g}$.
\end{lemma}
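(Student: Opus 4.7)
The plan is to begin with the universal $\mathfrak{gl}(V)$-module decomposition $V\otimes V = S^2V \oplus \Lambda^2V$, which automatically descends to a $\mathfrak{g}$-module decomposition since $\mathfrak{g}\subset \mathfrak{gl}(V)$, and then to peel off a trivial summand using the invariant bilinear form preserved by $\mathfrak{g}$.

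The defining module $V$ carries a non-degenerate invariant bilinear form $\langle\,,\,\rangle$, which is symmetric when $\mathfrak{g} = \mathfrak{so}_{2n+1}$ and skew-symmetric when $\mathfrak{g} = \mathfrak{sp}_{2n}$. This form gives a nonzero $\mathfrak{g}$-equivariant contraction $V\otimes V \to \mathbb{C}$, $v\otimes w \mapsto \langle v,w\rangle$, and dually (via the induced self-duality $V\cong V^*$) a nonzero $\mathfrak{g}$-equivariant map $\mathbb{C} \to V\otimes V$ picking out the invariant tensor $\sum_i f_i\otimes f^i$ expressed in dual bases. The key bookkeeping is to check that this invariant tensor is genuinely symmetric in the orthogonal case and skew in the symplectic case, so that it lies in $S^2V$ for $\mathfrak{so}_{2n+1}$ and in $\Lambda^2V$ for $\mathfrak{sp}_{2n}$; this is immediate from the identification $V \cong V^*$ induced by $\langle\,,\,\rangle$.

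Next I would invoke Weyl's theorem on complete reducibility: the finite-dimensional $\mathfrak{g}$-modules $S^2V$ and $\Lambda^2V$ are semisimple, so the inclusion of the trivial submodule splits off as a direct summand. This gives $S^2V \cong \mathbb{C}\oplus(S^2V/\mathbb{C})$ in the $\mathfrak{so}_{2n+1}$ case with $\Lambda^2V$ untouched, and $\Lambda^2V\cong\mathbb{C}\oplus(\Lambda^2V/\mathbb{C})$ in the $\mathfrak{sp}_{2n}$ case with $S^2V$ untouched. Substituting back into $V\otimes V = S^2V\oplus\Lambda^2V$ yields the two decompositions stated in the lemma.

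There is essentially no obstacle: the argument is a two-line consequence of complete reducibility together with the existence of an invariant form of the appropriate parity. If one also wanted the remaining pieces $\Lambda^2V$ and $S^2V/\mathbb{C}$ (respectively $S^2V$ and $\Lambda^2V/\mathbb{C}$) to be \emph{irreducible} — which is not asserted by the statement but is true — one would verify this by the standard highest-weight calculation, recognizing them as the adjoint representation and the representation with highest weight $2\epsilon_1$ or $\epsilon_1+\epsilon_2$ in types $B_n$ and $C_n$ respectively.
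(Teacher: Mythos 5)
Your argument is correct, and it is the standard one. The paper itself states this lemma without proof — it is treated as a known fact about tensor square decompositions for classical groups (the relevant references being, e.g., Goodman--Wallach or Fulton--Harris) — so there is no paper proof to compare against. The two ingredients you identify are exactly the right ones: the $\mathfrak{gl}(V)$-decomposition $V\otimes V = S^2V\oplus\Lambda^2V$ restricted to $\mathfrak{g}$, and the parity of the invariant tensor (symmetric for $\mathfrak{so}_{2n+1}$, skew for $\mathfrak{sp}_{2n}$), followed by Weyl complete reducibility to turn $S^2V/1\oplus 1$ (resp.\ $\Lambda^2V/1\oplus 1$) from a quotient-plus-submodule statement into a direct sum statement. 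Your computation that the invariant tensor $\sum_i f_i\otimes f^i$ inherits the symmetry type of the Gram matrix is also correct. One small point worth flagging if you were writing this into the paper: the paper later (in the proof of Lemma \ref{omegatranspositions}) implicitly uses that the three summands are irreducible so that $\Omega_{12}$ acts by scalars on each; your closing remark about irreducibility via highest weights ($\varpi_2$-type weights and the adjoint) is therefore not superfluous in context, even though the lemma as stated does not assert it.
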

Let $\pr_1$ be the projection of $V\otimes V$ onto the trivial submodule $1$ in the decomposition above. Let $\pr_{i,i+1}$ be the projection onto the trivial submodule of $V_i \otimes V_{i+1}$.

\begin{lemma}{\cite[Theorem 2.2]{DRV12}}\label{braueraction}   Let $G$ be $O(p,q)$ or $Sp_{2n}(\mathbb{R})$. Let $X$ be a complex $G$-representation and $V$ the defining matrix module of $G$.  Then there exists $m\in \mathbb{N}$ such that there is a homomorphism $\pi:\mathfrak{B}_k[m] \to \End(X \otimes V^{\otimes k})$:
$$\pi( z_i) = \sum_{j<i}\Omega_{ji},$$
$$\pi(t_{i,i+1}) = s_{i,i+1},$$
$$\pi(e_{i,i+1}) = id \otimes ...\otimes id \otimes m\pr_{i,i+1}\otimes id \otimes....\otimes id.$$
For $G = Sp_{2n}(\mathbb{R})$ the parameter is $m =-n$ and if $G = O(p,q)$ then $m = \lfloor \frac{p+q}{2}\rfloor$.
\end{lemma}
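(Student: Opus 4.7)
The plan is to verify each defining relation of $\mathfrak{B}_k[m]$ under the proposed assignments, reproducing Theorem 2.2 of \cite{DRV12} in our notation. The relations fall into three groups: the Brauer-algebra relations among $\pi(t_{i,i+1}) = s_{i,i+1}$ and $\pi(e_{i,i+1}) = m\,\pr_{i,i+1}$; the polynomial relations involving the $z_i = \sum_{j<i}\Omega_{ji}$ and their interaction with the Brauer generators; and the scalar identity $e_{12}z_1^l e_{12} = W_l e_{12}$.

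For the Brauer relations, I would fix a basis $\{v_a\}$ of $V$ with dual basis $\{v^a\}$ under the defining $G$-invariant bilinear form (symmetric in the orthogonal case, antisymmetric in the symplectic case). The trivial summand of $V\otimes V$ is the line spanned by $c = \sum_a v_a \otimes v^a$, and in this basis one checks directly $\pi(e_{i,i+1})^2 = m\,\pi(e_{i,i+1})$, the trivial-braid relations $t_{i,i+1}e_{i,i+1} = e_{i,i+1} = e_{i,i+1}t_{i,i+1}$, the braid--idempotent interchange $t_{i,i+1}t_{i+1,i+2}e_{i,i+1}t_{i+1,i+2}t_{i,i+1}=e_{i+1,i+2}$, and the various locality statements. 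The symmetric group relations for the $s_{i,i+1}$ are automatic.

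For the VW relations, the engine is the classical Casimir-type identity relating $\Omega_{i,i+1}$, $s_{i,i+1}$ and $\pi(e_{i,i+1})$ on $V_i \otimes V_{i+1}$, obtained by diagonalising each operator on the three $\mathfrak{g}$-isotypic components of Lemma \ref{decompofVtensorV}; it is precisely this identity that forces the stated value of $m$ ($\lfloor(p+q)/2\rfloor$ for $O(p,q)$ and $-n$ for $Sp_{2n}(\mathbb{R})$, the sign and the factor of two in the symplectic case coming from the antisymmetry of the form). Combined with the evident conjugation $s_{i,i+1}\Omega_{j,i} = \Omega_{j,i+1}s_{i,i+1}$ for $j \neq i,i+1$, telescoping $s_{i,i+1}z_i - z_{i+1}s_{i,i+1}$ collects exactly the $\Omega_{i,i+1}s_{i,i+1}$ contribution and yields $t_{i,i+1}z_i - z_{i+1}t_{i,i+1} = 1 + e_{i,i+1}$. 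Commutativity $[z_i,z_j]=0$ reduces to standard identities among the $\Omega_{ab}$ (disjoint ones commute, overlapping ones obey a cocycle rule); the vanishing $e_{i,i+1}(z_i+z_{i+1})=0$ follows from $\mathfrak{g}$-invariance of the form (so that $\Omega_{j,i}+\Omega_{j,i+1}$ acts on the trivial summand of $V_i\otimes V_{i+1}$ through the diagonal $\mathfrak{g}$-action) together with the vanishing of the Casimir on the trivial representation; and the scalars $W_l$ are read off by computing $\pr_{12}\circ\Omega_{01}^l\circ\pr_{12}$, which collapses to a scalar operator since its image lies in a one-dimensional summand. The main technical obstacle is pinning down the precise form of the key identity on $V\otimes V$ with the correct scalar $m$ in each case, requiring careful tracking of the signs introduced by the symmetric/antisymmetric distinction; once this is in hand, every remaining relation is a mechanical check.
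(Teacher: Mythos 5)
The paper does not prove Lemma~\ref{braueraction}: it is stated with the citation \cite[Theorem 2.2]{DRV12} and imported as a black box, so there is no in-paper argument to compare against. Your reconstruction is the standard one, and it is essentially sound at the level of a plan. The identity you call the ``engine,'' namely $\Omega_{i,i+1} = s_{i,i+1} + m\,\pr_{i,i+1}$ on $V_i\otimes V_{i+1}$ obtained by diagonalising on the three isotypic pieces of Lemma~\ref{decompofVtensorV}, is exactly what the paper itself records a few lines later as Lemma~\ref{omegatranspositions} (there cited from \cite[Lemma 2.3.1]{CT11} and proved by the same diagonalisation), so you have correctly identified the central computational input.

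Two small cautions. First, there is no ``factor of two'' in the symplectic parameter: for $Sp_{2n}(\mathbb{R})$ the value is $m=-n$, while for $O(p,q)$ with $p+q=2n+1$ it is $\lfloor(p+q)/2\rfloor=n$; both have magnitude $n$, and what differs is only the sign, which comes from $s_{12}$ acting by $-1$ rather than $+1$ on the one-dimensional trivial summand spanned by $\sum_a v_a\otimes v^a$ when the invariant form is antisymmetric. Second, in the telescoping step you should be explicit about signs: with $z_i=\sum_{j<i}\Omega_{ji}$ and $s_{i,i+1}\Omega_{ji}s_{i,i+1}=\Omega_{j,i+1}$ for $j<i$, the naive collection gives $s_{i,i+1}z_i - z_{i+1}s_{i,i+1} = -\,\Omega_{i,i+1}s_{i,i+1}$, which one then rewrites via $\Omega_{i,i+1}=s_{i,i+1}+m\,\pr_{i,i+1}$; matching this against the stated cross-relation $t_{i,i+1}z_i - z_{i+1}t_{i,i+1} = 1+e_{i,i+1}$ requires care with the sign conventions used in \cite{DRV12} for the $z_i$ and the idempotent, which differ among sources. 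That bookkeeping is the only place your sketch could genuinely go wrong; the rest is, as you say, mechanical.
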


\begin{theorem}  For $G = O(p,q)$ or $Sp_{2n}(\mathbb{R})$, the VW-algebra with the action on $X\otimes V ^{\otimes k}$ defined in Lemma \ref{braueraction} commutes with the action of $U(\mathfrak{g})$ on $X \otimes V^{\otimes k}$.

\end{theorem}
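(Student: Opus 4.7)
The plan is to check that the action commutes with $U(\mathfrak{g})$ generator-by-generator, since the diagonal action of $\mathfrak{g}$ on $X \otimes V^{\otimes k}$ is $\pi_\Delta(x) = \sum_{l=0}^k (x)_l$ for $x \in \mathfrak{g}$, and it suffices to show $[\pi_\Delta(x), \pi(g)] = 0$ for each generator $g \in \{t_{i,i+1}, e_{i,i+1}, z_i\}$ and each $x \in \mathfrak{g}$.

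For the symmetric group generators $t_{i,i+1}$, the operator $s_{i,i+1}$ permutes two identical copies of $V$, so it is $\mathfrak{g}$-equivariant; concretely $s_{i,i+1}\bigl((x)_i+(x)_{i+1}\bigr) = \bigl((x)_{i+1}+(x)_i\bigr)s_{i,i+1}$, while $s_{i,i+1}$ commutes with $(x)_l$ for $l \neq i,i+1$. For the Brauer generators $e_{i,i+1}$, the image $m\,\pr_{i,i+1}$ is (up to scalar) the projector onto the trivial $\mathfrak{g}$-summand of $V_i \otimes V_{i+1}$ appearing in Lemma \ref{decompofVtensorV}. Since this summand is a $\mathfrak{g}$-submodule, the projection is a $\mathfrak{g}$-module map on positions $i,i+1$ and is identity elsewhere, so it commutes with every $(x)_l$.

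The main content is the polynomial generators $z_i = \sum_{j<i}\Omega_{ji}$, and the key input is the centrality of the Casimir. As in Definition \ref{casimir}, write $C^\mathfrak{g}$ for the Casimir of $\mathfrak{g}$; from the proof that $\Omega_{jl}$ is basis-independent we have
\[
\Omega_{jl} = \tfrac{1}{2}\bigl(\Delta(C^\mathfrak{g})_{jl} - (C^\mathfrak{g})_j - (C^\mathfrak{g})_l\bigr),
\]
where the right-hand side is supported on tensor positions $j$ and $l$ only. Because $C^\mathfrak{g}$ is central in $U(\mathfrak{g})$, each term on the right commutes with the diagonal action $(x)_j + (x)_l$ of $\mathfrak{g}$ on those two positions, and $\Omega_{jl}$ trivially commutes with $(x)_r$ for $r \notin \{j,l\}$. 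Therefore $[\pi_\Delta(x), \Omega_{jl}] = 0$ for all $x \in \mathfrak{g}$ and all $0 \le j < l \le k$, and summing yields $[\pi_\Delta(x), z_i] = 0$.

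The main (and only real) obstacle is keeping straight that $\Omega_{ji}$ involves the acting Lie algebra on only two specific tensor positions while the diagonal $\mathfrak{g}$-action involves all $k+1$ positions; once this bookkeeping is done, the centrality of $C^\mathfrak{g}$ delivers the result. Combining the three cases shows that $\pi(\mathfrak{B}_k[m])$ lies in the commutant $\End_{U(\mathfrak{g})}(X \otimes V^{\otimes k})$, proving the theorem.
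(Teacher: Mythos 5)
Your proof is correct. The paper actually states this theorem without giving a proof of its own, effectively deferring to the reference \cite{DRV12} cited in Lemma \ref{braueraction}; your generator-by-generator argument supplies the missing verification, and the three pieces (the $\mathfrak{g}$-equivariance of the flip $s_{i,i+1}$, the fact that $\pr_{i,i+1}$ is a $\mathfrak{g}$-module map because the trivial summand of $V_i\otimes V_{i+1}$ is a $\mathfrak{g}$-submodule, and the centrality of $C^\mathfrak{g}$ applied to the two-position formula for $\Omega_{jl}$) are exactly the ingredients one needs. One small correction over the paper's basis-independence lemma: your identity
\[
\Omega_{jl} = \tfrac{1}{2}\bigl(\Delta(C^\mathfrak{g})_{jl} - (C^\mathfrak{g})_j - (C^\mathfrak{g})_l\bigr)
\]
carries the correct factor of $\tfrac{1}{2}$, since $\Delta(C^\mathfrak{g}) - C^\mathfrak{g}\otimes 1 - 1\otimes C^\mathfrak{g} = \sum_{b}(b\otimes b^{*}+b^{*}\otimes b) = 2\,\Omega_{12}$ by symmetry of the Killing form; the paper omits this factor, which is harmless for its argument (basis-independence) and for yours (commutation), but worth noting.
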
\begin{lemma}{\cite[Lemma 2.3.1]{CT11}} \label{omegatranspositions} 
Let $0<i<j\leq k $ and $G = O(p,q)$ or $Sp_{2n}(\mathbb{R})$.  As operators on $X \otimes V^{\otimes k}$ 
$$\Omega_{ij} = s_{ij} + m\pr_{i,i+1},\text{ for } 1 \leq i<j \leq k$$
where $m = \lfloor \frac{p+q}{2}\rfloor$ or $-n$ respectively.
\end{lemma}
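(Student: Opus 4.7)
The strategy is to reduce to the case $k=2$, $(i,j)=(1,2)$, and then verify the identity summand-by-summand on the decomposition of $V \otimes V$ supplied by Lemma \ref{decompofVtensorV}.

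First I would observe that $\Omega_{ij}$, $s_{ij}$ and $\pr_{i,j}$ all act as the identity on the tensor factors other than the $i$-th and $j$-th: for $\Omega_{ij}$ this is immediate from Definition \ref{Omega}, and for $s_{ij}$, $\pr_{i,j}$ it is by construction. So it suffices to prove the identity $\Omega_{12} = s_{12} + m\pr_{1,2}$ as operators on $V \otimes V$, which also explains why the projection appearing on the right-hand side is naturally $\pr_{i,j}$ rather than $\pr_{i,i+1}$.

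Next, I would note that both sides are $\mathfrak{g}$-equivariant: the operator $\Omega_{12}$ sits inside the Casimir identity
$$\Omega_{12} \;=\; \tfrac{1}{2}\bigl(\Delta(C^{\mathfrak{g}}) - C^{\mathfrak{g}}\otimes 1 - 1 \otimes C^{\mathfrak{g}}\bigr),$$
while $s_{12}$ and $\pr_{1,2}$ are $\mathfrak{g}$-equivariant by construction. By Lemma \ref{decompofVtensorV}, $V \otimes V$ is a direct sum of three pairwise non-isomorphic irreducibles (a non-trivial symmetric component, a non-trivial antisymmetric component, and a one-dimensional trivial component), so Schur's lemma reduces the problem to matching scalars on each summand. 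The operator $s_{12}$ acts by $+1$ on the symmetric part and by $-1$ on the antisymmetric part, and $\pr_{1,2}$ acts as the identity on the trivial summand and by zero elsewhere. For $\Omega_{12}$, the Casimir identity gives scalar $(c_W - 2c_V)/2$ on each irreducible $W \subset V\otimes V$, where $c_\bullet$ denotes the Casimir eigenvalue. A highest-weight computation shows that these scalars are $+1$ on the non-trivial symmetric summand and $-1$ on the non-trivial antisymmetric summand, matching $s_{12}$ exactly, while on the trivial summand the Casimir of $V\otimes V$ vanishes and one is left with $-c_V$, which a direct calculation identifies with $\epsilon + m$ (where $\epsilon \in \{\pm 1\}$ is the sign of $s_{12}$ on the trivial piece: $+1$ in the orthogonal case where the invariant vector lies in $S^2 V$, and $-1$ in the symplectic case where it lies in $\Lambda^2 V$).

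The main obstacle is the bookkeeping of the invariant bilinear form normalisation built into Definition \ref{Omega} (the Killing form), since the precise value of the constant $m$ depends on it. A cleaner way to handle the trivial summand is to bypass the Casimir computation there and instead evaluate $\Omega_{12}$ on an explicit $\mathfrak{g}$-invariant vector $\sum_a f_a \otimes f_a^\ast$ in $V\otimes V$ (with $\{f_a\}, \{f_a^\ast\}$ dual bases for the defining form) using a concrete basis of $\mathfrak{g}$; this recovers the dimension-dependent factor $\lfloor (p+q)/2\rfloor$ or $-n$ directly, matching the claimed value of $m$.
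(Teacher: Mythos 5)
Your proposal follows essentially the same route as the paper: reduce to $\Omega_{12}$ on $V\otimes V$, invoke the decomposition from Lemma \ref{decompofVtensorV}, write $\Omega_{12}$ via the Casimir comultiplication identity, and match scalars on each irreducible summand. You also correctly include the factor $\tfrac{1}{2}$ in $\Omega_{12}=\tfrac{1}{2}(\Delta(C^{\mathfrak g})-C^{\mathfrak g}\otimes 1-1\otimes C^{\mathfrak g})$ that the paper's displayed identity drops, and you rightly flag that the projection in the statement should read $\pr_{i,j}$ rather than $\pr_{i,i+1}$ for general $i<j$.
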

\begin{proof}
One only needs to consider the operator $\Omega_{12}$ on $V \otimes V$. By Lemma \ref{decompofVtensorV} $V\otimes V$ decomposes as 
$$\Lambda^2 V \oplus S^2V/1 \oplus 1 \text{ for } \mathfrak{g}_\mathbb{C}=  \mathfrak{so}_{2n+1}(\mathbb{R}),$$
$$\Lambda^2 V / 1 \oplus S^2 V \oplus 1 \text{ for } \mathfrak{g}_\mathbb{C} = \mathfrak{sp}_{2n}(\mathbb{C}).$$
On $V\otimes V$ $s_{12} = pr_{S^2V} -pr_{\Lambda^2V}$. Then using the fact that $\Omega_{12} = \Delta(C) - C \otimes 1 - 1 \otimes C$ we find the operators
$$\Omega_{12} \text{ and } s_{12} + m e_{12},$$
act by the same scalars on the irreducible decomposition of $V \otimes V$. \end{proof}

For $G=GL_n$ the commutator $\End_{GL_n}(X \otimes V^{\otimes k})$ contains the same type Weyl group, the symmetric group (\cite{AS98}). One might expect that in type $B$ and $C$ this may be the case too. However $\End_{Sp_{2n}(\mathbb{R})}(X\otimes V^{\otimes k})$, $\End_{O(p,q)}(X\otimes V^{\otimes k})$ and the VW-algebra, do not contain a copy of the hyperoctahedral group. We look to establish a theory that has this type symmetry reflected in the commutator. 

We introduce the type $B/C$ VW-algebra which acts on $X\otimes V^{\otimes k}$ and commutes with the action of $K$ for $G = Sp_{2n}(\mathbb{R})$ or $O(p,q)$. Crucially the type $B/C$ VW-algebra contains the Weyl group of Type B/C, $W(B_k)$. Recall the hyperoctahedral group is generated by simple reflections $s_{\epsilon_i - \epsilon_{i+1}}$ and $s_{\epsilon_k}$.

\begin{definition}\label{BCbrauerdef}
The type $B/C$  VW-algebra $\mathfrak{B}^\theta_k[m_0,m_1]$  is generated by the VW-algebra $\mathfrak{B}_k[m_0]$ and reflections $\theta_j$, for $j = 1,...,k$, such that the subalgebra generated by $t_{i,i+1}$, for $i =1,..,k-1$ and $\theta_j$ is isomorphic to the group algebra of the $k^{th}$ hyperoctahedral group $\mathbb{C}[W(B_k)]$ and the following relations hold;

$$[e_{i,i+1},\theta_j]=0 \text{ for all j},$$
$$e_{i,i+1} \theta_i\theta_{i+1}= e_{i,i+1} =\theta_i \theta_{i+1}e_{i,i+1}  \text{ for } i = 1,...,k-1,$$
$$[\theta_n, x_j] = 0 \text{ for } j \neq k.$$
$$e_{i,i+1}\theta_i e_{i,i+1} = m_1 e_{i,i+1} \text{ for } i =1,...,k-1,$$
\end{definition} 

The Lie algebra $\mathfrak{g}$ decomposes as eigenspaces of a Cartan involution $\theta$ that is $\mathfrak{g} = \mathfrak{k} \oplus \mathfrak{p}$. For $G = O(p,q)$ or $Sp_{2n}(\mathbb{R})$ there is a semisimple involutive $\xi \in \mathfrak{g}$ such that $\theta$ is equal to conjugation by $\xi$.
\begin{remark} The subalgebra of $\mathfrak{B}^\theta_k[m_0,m_1]$ generated by $e_{i,i+1}$, $t_i$ and $\theta_i$ is equal to the cyclotomic Brauer $Br_{k,2}[m_0,m_1]$, see \cite{H01,BCV12,C20} for the definition of the cyclotomic Brauer algebra, it's representation theory and how it acts on $\End_k(V^{\otimes k})$. \end{remark}
\begin{lemma}\label{actionofB} The type $B/C$ VW-algebra $\mathfrak{B}^\theta_k[m]$ acts on $X \otimes V^{\otimes k}$. This action is defined by extending the action $\pi$ of the VW-algebra to the extra generators $\theta_i$. The generators $\theta_i$ act by $(\xi)_i \in U(\mathfrak{g})^{\otimes k+1}$. 
Extend $\pi$ to  $\mathfrak{B}^\theta_k[m]$ by $\pi(\theta_i) = (\xi)_i \in U(\mathfrak{g})^{\otimes k+1}\subset \End(X\otimes V^{\otimes k})$. That is
$$\pi: \mathfrak{B}^\theta_k[m] \longrightarrow \End_K(X \otimes V^{\otimes k}),$$

$$\pi (\theta_i) = (\xi)_i.$$
\end{lemma}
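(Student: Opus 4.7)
The plan is to verify each defining relation of $\mathfrak{B}_k^\theta[m_0,m_1]$ holds under $\pi$, and then to check that $\pi$'s image commutes with $K$. Since Lemma \ref{braueraction} already realises the VW-subalgebra on $X\otimes V^{\otimes k}$, only the relations involving the new generators $\theta_i=(\xi)_i$ require verification.

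First I would dispatch the type-$B$ Weyl group relations. The identity $\theta_i^2=1$ follows from $\xi^2=1$; commutativity $\theta_i\theta_j=\theta_j\theta_i$ for $i\neq j$ holds because $(\xi)_i$ and $(\xi)_j$ act on disjoint tensor factors; and $t_{i,i+1}\theta_i t_{i,i+1}=\theta_{i+1}$ follows because $s_{i,i+1}$ swaps the $i$-th and $(i+1)$-st factors of $V$. Combined with the symmetric group relations inherited from the VW-subalgebra, this produces a copy of $W(B_k)$ inside the image of $\pi$.

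Next I would check the relations coupling $\theta_i$ with $e_{i,i+1}$ and with $\pi(z_j)=\sum_{l<j}\Omega_{l,j}$. Disjoint-support cases are immediate. The two essential identities $e_{i,i+1}\theta_i\theta_{i+1}=e_{i,i+1}$ and $e_{i,i+1}\theta_i e_{i,i+1}=m_1 e_{i,i+1}$ reduce to statements about how $\xi$ interacts with the trivial submodule $T\subset V_i\otimes V_{i+1}$: since $T$ is $G$-invariant and $\xi$ is involutive, $\xi\otimes\xi$ preserves $T$ and acts as $+1$ with the appropriate normalisation of $\xi$, giving the first; and $\pr_{i,i+1}\circ(\xi)_i\circ\pr_{i,i+1}$ maps the line $T$ into itself, so equals a scalar multiple of $\pr_{i,i+1}$, the scalar defining $m_1$. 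For the overlapping-support commutators $[(\xi)_i,\Omega_{l,j}]$, I would decompose $\Omega_{l,j}=\Omega^{\mathfrak{k}}_{l,j}+\Omega^{\mathfrak{p}}_{l,j}$ via Definition \ref{Omega} and use that $\mathrm{Ad}(\xi)=\theta$ acts as $+1$ on $\mathfrak{k}$ and $-1$ on $\mathfrak{p}$; the resulting commutators collapse modulo the $e_{i,i+1}$-type terms to the defining relations.

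Finally, the image of $\pi$ lies in $\End_K(X\otimes V^{\otimes k})$: by Lemma \ref{braueraction} the VW-subalgebra already commutes with all of $U(\mathfrak{g})$ and hence with $K$; for $\theta_i=(\xi)_i$, any $k\in K$ satisfies $\Theta(k)=k$, i.e.\ $\xi k\xi^{-1}=k$, so $\xi$ commutes with $k$ on each individual tensor factor, whence $(\xi)_i$ commutes with the diagonal $K$-action on $X\otimes V^{\otimes k}$. The main obstacle is the second step: the mixed relations coupling $\xi$ with the trivial projection and with the Casimir-type elements $\Omega_{l,j}$ require a careful direct computation using the explicit form of the invariant bilinear or symplectic form on $V$ and the chosen involutive representative $\xi\in\mathfrak{g}$ of the Cartan involution, carried out separately for $G=Sp_{2n}(\mathbb{R})$ and $G=O(p,q)$; the scalar arising from $\pr_{i,i+1}\circ(\xi)_i\circ\pr_{i,i+1}$ is precisely what determines the parameter $m_1$.
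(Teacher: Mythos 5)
Your proposal verifies the homomorphism from first principles, whereas the paper's proof is considerably shorter because it outsources almost all of the work to two prior results: Lemma~\ref{braueraction} (the VW-subalgebra already acts on $X\otimes V^{\otimes k}$, from \cite{DRV12}) and the result of \cite{C20} that the cyclotomic Brauer algebra $Br_{k,2}[m_0,m_1]$ — which is exactly the subalgebra $\langle e_{i,i+1}, t_{i,i+1}, \theta_j\rangle$ — already acts on $V^{\otimes k}$ and commutes with $K$. Since those generators act only on the $V^{\otimes k}$ factors, that prior result disposes of the hyperoctahedral relations, all the $e$--$\theta$ cross-relations, and the scalar $m_1$ in one stroke. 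The only relation in Definition~\ref{BCbrauerdef} not covered is the mixed commutator $[\theta_k, z_j]=0$ for $j\neq k$ (so $j<k$), and since $\pi(z_j)=\sum_{l<j}\Omega_{lj}$ touches only tensor factors $0,\dots,j<k$ while $\pi(\theta_k)=(\xi)_k$ touches only factor $k$, this is a disjoint-support commutator and is immediately zero. Your approach is logically fine but repeats work: the Weyl group relations, the $e$--$\theta$ identities (including your correct observation that $\pr_{i,i+1}\circ(\xi)_i\circ\pr_{i,i+1}$ is the source of $m_1$), and the $K$-commutativity are all contained in the cyclotomic Brauer result.

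One part of your plan is a red herring, though. You discuss ``overlapping-support commutators $[(\xi)_i,\Omega_{l,j}]$'' and propose decomposing $\Omega_{l,j}=\Omega^{\mathfrak{k}}_{l,j}+\Omega^{\mathfrak{p}}_{l,j}$ so that the pieces ``collapse modulo the $e_{i,i+1}$-type terms''. No such commutator needs to be checked for this lemma: the presentation imposes $[\theta_k,z_j]=0$ only for $j\neq k$, and (since $\pi(z_j)$ is supported on factors $<k$ when $j<k$) every required commutator is automatically of disjoint support. The $\mathfrak{k}/\mathfrak{p}$ decomposition and the interaction of $\mathrm{Ad}(\xi)$ with $\Omega^{\mathfrak{k}}$ and $\Omega^{\mathfrak{p}}$ do become essential later in the paper — in Section~\ref{restrictedfunctors}, where $\theta_k z_k + z_k\theta_k$ (which is \emph{not} constrained by Definition~\ref{BCbrauerdef}) is computed to descend to a Hecke algebra quotient — but they play no role in establishing that $\pi$ is a well-defined homomorphism.
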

Explicitly, $(\xi)_i = \overbrace{id \otimes ...\otimes id }^i \otimes \xi \otimes  \overbrace{id \otimes ...\otimes id}^{k-i} \in U(\mathfrak{g})^{\otimes k+1}.$
The constants $(m_0,m_1)$ equal $(\lfloor \frac{p+q}{2}\rfloor,p-q)$ when $G= O(p,q)$ and $(m_0,m_1) = (-n,0)$ if $G= Sp_{2n}(\mathbb{R})$.
\begin{proof} Since we know that the VW-algebra  $\mathfrak{B}_k[m]$ acts on $X\otimes V^{\otimes k}$ and that the cyclotomic Brauer algebra $Br_{k,2}[m_0,m_1]$ acts on $\End_K(V^{\otimes k})$ we only need to check the action of $\theta_j$, and $z_i$ and the relations involving them in Definition \ref{BCbrauerdef}. This equates to checking $[z_i, \theta_k] = 0 $ for all $i < n$.

If $i \neq j$, then $(g)_i$ and $(h)_j$ commute  in $U(\mathfrak{g})^{k+1}$. 
Definition \ref{braueraction} states $\pi(z_i) = \sum_{j<i} \Omega_{ji}$, hence:
$$[z_i, (\xi)_k ] = \sum_{j<i} [\Omega_{ji}, (\xi)_k ] = 0 \text{ for } k <n.$$
\end{proof}

\begin{theorem} Let $G = O(p,q) $ or $Sp_{2n}(\mathbb{R})$ and $X$  a Harish-Chandra module. The type $B/C$ Brauer algebra $\mathfrak{B}^\theta_k[m]$ acts on $X \otimes V^{\otimes k}$ and commutes with the action of $K$ on $X\otimes V ^{\otimes k}$.
\end{theorem}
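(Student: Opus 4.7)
The plan is to build on Lemma \ref{actionofB} (which supplies a well-defined action $\pi$) and the preceding theorem (which gives commutation of the VW-subalgebra $\mathfrak{B}_k[m_0]$ with $U(\mathfrak{g})$). What remains is to verify that the full image of $\pi$ commutes with the group action of $K$ on $X\otimes V^{\otimes k}$. I would split the check into two parts according to whether the generator lies in the VW-subalgebra or is one of the new reflections $\theta_i=(\xi)_i$, the second being the genuinely new ingredient.

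For the generators of the VW-subalgebra I would work generator by generator. The permutation $s_{i,i+1}$ commutes with the diagonal $K$-action tautologically. The projection $\pr_{i,i+1}$ is onto the trivial $G$-submodule of $V\otimes V$ supplied by Lemma \ref{decompofVtensorV}, so it is $G$-equivariant, and hence $K$-equivariant. For $z_i=\sum_{j<i}\Omega_{ji}$ the argument exploits the $\mathrm{Ad}(K)$-invariance of the Killing form: conjugating $\sum_b (b)_j\otimes(b^*)_i$ by $\rho(k)\otimes\rho(k)$ yields $\sum_b(\mathrm{Ad}(k)b)_j\otimes(\mathrm{Ad}(k)b^*)_i$ via the $(\mathfrak{g},K)$-compatibility on $X$ and $G$-equivariance on $V$, which rewrites to $\Omega_{ji}$ after relabelling by the new dual basis. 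Since the argument uses only the $\mathrm{Ad}(K)$-action on $\mathfrak{g}$, it covers all of $K$, including the disconnected components that occur for $O(p,q)$, so one need not rely on a prior integration from $\mathfrak{k}$ to the identity component.

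For the new generator $\theta_i=(\xi)_i$, the operator is nontrivial only on the $i$-th $V$-factor, while $K$ acts diagonally. Commutation with the diagonal $K$-action therefore reduces to the identity $\xi\,\rho_V(k)=\rho_V(k)\,\xi$ for every $k\in K$. This is exactly the defining property of $\xi$: the group Cartan involution $\Theta$ on $G$ is realised as conjugation by $\xi$ viewed as a matrix on $V$ through the defining representation, and $K$ is by definition $G^\Theta$, so $\xi k\xi^{-1}=\Theta(k)=k$ on $V$ for every $k\in K$. Combining the two parts proves the theorem. The only delicate point is bridging the Lie-algebraic description of $\xi$ to a group-level commutation with \emph{all} of $K$; invoking the group involution $\Theta$ together with $K=G^\Theta$ resolves it cleanly and avoids any subtlety about passing from $\mathfrak{k}$ to $K$ in the presence of disconnected components.
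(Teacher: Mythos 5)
Your proof is correct and follows the same overall route as the paper: reduce the problem to checking that the new generators $\theta_j = (\xi)_j$ commute with $K$, and use the Cartan involution. But you are noticeably more careful about a subtlety that the paper glosses over. The paper dispatches the VW-subalgebra with the remark that it ``commutes with $\mathfrak{g}$ and by restriction with $K$,'' and it verifies commutation of $(\xi)_j$ only at the Lie-algebra level, observing $\xi k - k\xi = 0$ for $k\in\mathfrak{k}$. Arguing from $\mathfrak{k}$ to $K$ is legitimate only if $K$ is connected, which holds for $Sp_{2n}(\mathbb{R})$ ($K\cong U(n)$) but fails for $O(p,q)$, where $K\cong O(p)\times O(q)$ is disconnected. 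You instead verify each generator at the group level: the permutations commute with the diagonal $K$-action outright, $\pr_{i,i+1}$ is $G$-equivariant because it projects onto a $G$-submodule, $\Omega_{ji}$ is $K$-invariant via $\mathrm{Ad}(K)$-invariance of the Killing form (which is the standard basis-relabelling argument and indeed covers all of $K$, not just $K^\circ$), and $(\xi)_j$ commutes with $\rho_V(k)$ for all $k\in K$ because $\Theta = \mathrm{Int}(\xi)$ at the group level and $K=G^\Theta$. This is a genuine improvement in rigor: your argument covers the disconnected components of $K$ uniformly, whereas the paper's proof as written requires the reader to supply the group-level upgrade for $O(p,q)$.
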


\begin{proof} The action of $\mathfrak{B}_k[m]$ commutes with $\mathfrak{g}$ and by restriction with $K$. The algebra $\mathfrak{B}_k^\theta[m_0,m_1] = \langle \mathfrak{B}_k[m] , \theta_j : \text{ for } j = 1,...,k\rangle$. Therefore, to verify that $\mathfrak{B}^\theta_k[m]$ commutes with the action of $K$, one only needs to check that $\pi(\theta_j) =(\xi)_j$ commutes with the action of $K$. Conjugation by $\xi$ is the Cartan involution: $\xi^{-1} K \xi = \Theta(K)$. By definition, $\Theta$ is the identity on $K$. Hence $\xi k - k \xi = 0$ for $k\in \mathfrak{k}$. Therefore:
$$[(\xi)_i, k] = \sum_{j=0}^{k+1} (\xi)_i -(k)_j  (\xi)_i (k)_j  = 0.$$

Hence the action of $(\xi)_i$ and $K$ commute. 
\end{proof}

\end{section}

\begin{section}{Quotients of the type $B/C$ Brauer algebra $\mathfrak{B}_k^\theta$}\label{quotientsofbrauer}

In Section \ref{functorssection} we introduce functors, defined in \cite{CT11}, from the category $\mathcal{HC}(G)$-mod to the category of $\mathfrak{B}^\theta_k[m]$  modules. However, we are aiming at graded Hecke algebra modules. In this section, we look at particular ideals and quotients of $\mathfrak{B}^\theta_k[m]$ which are isomorphic to graded Hecke algebras. This will set up Section \ref{restrictiontoprincipal} in which we focus on principal series modules and show that via the quotients defined in this section, the functors defined in Section \ref{functorssection} descend to take principal series modules to graded Hecke algebra modules. 

Recall that $W(R)$ denotes the Weyl group associated to a root datum $(R,X,\hat{R}, \hat{X},\Delta)$ and $\langle, \rangle: X \times \hat{X} \to \mathbb{C}$ is the pairing between dual spaces. Define the $\mathbb{C}$-spaces $\mathfrak{t} = X \otimes_\mathbb{Z} \mathbb{C}$, $\mathfrak{t}^* = \hat{X} \otimes_\mathbb{Z} \mathbb{C} .$

\begin{definition}\label{Heckedef}\cite{L89} The graded Hecke algebra $\mathbb{H}^R(\textbf{c})$ associated to the root system $(R,X ,\hat{R},\hat{X},\Delta)$ and parameter function $\mathbf{c}$ from $\Delta$ to $\mathbb{C}$, is as a vector space
$$\mathbb{H}^R(\textbf{c}) \cong S(\mathfrak{t}) \otimes \mathbb{C}[W(R)],$$
such that as an algebra $S(\mathfrak{t})$ and $\mathbb{C}[W(R)]$ are subalgebras and the following cross relations hold,
$$s_\alpha \epsilon - s_\alpha(\epsilon)s_\alpha = \mathbf{c}(\alpha)\langle \alpha , \hat{\epsilon} \rangle, \text{ for } \epsilon \in \mathfrak{t} \text{ and } \alpha \in \Delta.$$
\end{definition} 

If the parameter function $\mathbf{c}:\Delta \to \mathbb{C}$ is taken to uniformly be 1, then in this case the graded Hecke algebra is entirely defined by the root system $(W,R,\Delta)$. For a Hecke algebra determined by the uniform parameter we denote it by $\mathbb{H}^{R_k}$ where $R_k$ is the root system. For example $\mathbb{H}^{D_k}$ denotes the graded Hecke algebra associated to the root system $D_k$ with the parameter function $\textbf{c}: \delta \to \mathbb{C}$ such that $\textbf{c}(\alpha) \equiv1.$

We fix the set of simple reflections of the hyperoctahedral group $W(B_k)$ to be $\{s_{i,i+1},\theta_k: i = 1,...,k-1\}$. We also associate to the hyperoctahedral group a $k$ dimensional vector space $\mathfrak{t}$ with basis $\epsilon_1,...,\epsilon_k$ and subset $\Delta = \{\epsilon_i-\epsilon_{i+1} \text{ and } \epsilon_k: i = 1,...,k-1\}$. Then for $c\in \mathbb{C}$ we define the parameter $c: \Delta\to \mathbb{C}$ as 
$$c(\alpha) = \begin{cases} 1 & \text{ if } \alpha = \epsilon_i-\epsilon_{i+1},\\c &\text{ if } \alpha = \epsilon_k.\end{cases}$$

We denote the graded Hecke algebra associated  to the Weyl group $W(B_k)$ with the parameter $c$ as $\mathbb{H}_k(c).$

\begin{lemma}\label{hecketypes} The graded Hecke algebra of type $B_k$ (resp. type $C_k$) is isomorphic to $\mathbb{H}_k(1)$ (resp. $\mathbb{H}_k(\frac{1}{2}))$) and the algebra $\mathbb{H}_k(0)$ is isomorphic to an extension of the Hecke algebra of type $D_k$,
$$\mathbb{H}_k(0) \cong \mathbb{H}^{D_k} \rtimes \mathbb{Z}_2.$$
\end{lemma}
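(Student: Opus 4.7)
The plan is to verify the three claimed identifications separately. The first two statements are unpackings of the definition of the cross relation in Definition \ref{Heckedef}, once one fixes the root data for $B_k$ and $C_k$; the third requires an explicit construction of an algebra isomorphism and is where the interesting structural content lies.

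First I would treat the $B_k$ and $C_k$ cases. Both root systems have Weyl group equal to $W(B_k)$, so $\mathbb{H}^{B_k}$, $\mathbb{H}^{C_k}$ and $\mathbb{H}_k(c)$ share the same group algebra piece and the same polynomial part $S(\mathfrak{t})$; the only data to compare are the cross relations for the simple reflection $s_{\epsilon_k}=s_{2\epsilon_k}$. Writing out $s_{\alpha}\epsilon_k - s_\alpha(\epsilon_k)s_\alpha = \mathbf{c}(\alpha)\langle\alpha,\hat\epsilon_k\rangle$ with $\alpha=\epsilon_k$ (type $B_k$, short simple root, $\mathbf{c}(\alpha)=1$) and with $\alpha=2\epsilon_k$ (type $C_k$, long simple root, $\mathbf{c}(\alpha)=1$), and then rescaling the second to the form used in $\mathbb{H}_k(c)$, one reads off that the parameter attached to $\epsilon_k$ is $1$ in the first case and $\tfrac12$ in the second. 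The long-root cross relations $s_i\epsilon - s_i(\epsilon)s_i = \langle\epsilon_i-\epsilon_{i+1},\hat\epsilon\rangle$ for $i<k$ are identical in all three algebras, so this completes the first two isomorphisms.

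The main work is the third isomorphism. Recall $W(D_k)$ is the index-$2$ subgroup of $W(B_k)$ generated by $\{s_i : i=1,\dots,k-1\}$ together with $s_k':=s_{\epsilon_{k-1}+\epsilon_k}=s_k s_{k-1} s_k$, with $W(B_k)=W(D_k)\rtimes\langle s_k\rangle$. In $\mathbb{H}_k(0)$ the vanishing parameter $c(\epsilon_k)=0$ forces
\[
s_k\epsilon_j = \epsilon_j s_k\ (j<k),\qquad s_k\epsilon_k = -\epsilon_k s_k,
\]
so that conjugation by $s_k$ on $S(\mathfrak{t})$ is the sign flip on $\epsilon_k$. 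Consequently $s_k$ normalizes the subalgebra $\tilde{\mathbb{H}}:=\langle S(\mathfrak{t}),W(D_k)\rangle\subset\mathbb{H}_k(0)$, and $\mathbb{H}_k(0)=\tilde{\mathbb{H}}\oplus\tilde{\mathbb{H}}\,s_k$ as a $\tilde{\mathbb{H}}$-module.

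To finish, I would identify $\tilde{\mathbb{H}}$ with $\mathbb{H}^{D_k}$. The simple roots of type $D_k$ are $\{\epsilon_i-\epsilon_{i+1}: i<k\}\cup\{\epsilon_{k-1}+\epsilon_k\}$ and all are long, so $\mathbb{H}^{D_k}$ has uniform parameter $1$. The cross relations for the first $k-1$ simple reflections already hold inside $\mathbb{H}_k(0)$ with parameter $1$; what is left is to verify the cross relation for $s_k'=s_k s_{k-1} s_k$. For $\epsilon\in\mathfrak{t}$ one computes $s_k s_{k-1}s_k\cdot\epsilon$ by first commuting $s_k$ past $\epsilon$ (using $c(\epsilon_k)=0$ so no extra term appears), then applying the $s_{k-1}$ cross relation (producing the unit constant from the long-root pairing $\langle\epsilon_{k-1}-\epsilon_k,\hat\epsilon\rangle$), and finally commuting $s_k$ back through. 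The resulting identity is exactly $s_k'\epsilon - s_k'(\epsilon) s_k' = \langle \epsilon_{k-1}+\epsilon_k,\hat\epsilon\rangle$, the $\mathbb{H}^{D_k}$ relation with parameter $1$. Combined with $s_k^2=1$ and the normalization of $\tilde{\mathbb{H}}$ by $s_k$, this gives $\mathbb{H}_k(0)\cong\tilde{\mathbb{H}}\rtimes\langle s_k\rangle\cong\mathbb{H}^{D_k}\rtimes\mathbb{Z}_2$. The only mildly delicate step is the last cross-relation verification for $s_k'$, where the cancellation that produces the correct constant relies precisely on the fact that $c(\epsilon_k)=0$; everywhere else the argument is a bookkeeping match of definitions.
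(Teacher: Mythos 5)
Your proposal is correct and follows essentially the same approach as the paper: identify $\mathbb{H}_k(1)$ with $\mathbb{H}^{B_k}$ (and $\mathbb{H}_k(1/2)$ with $\mathbb{H}^{C_k}$) by matching cross relations, and for $\mathbb{H}_k(0)$ exhibit $\mathbb{H}^{D_k}$ as the subalgebra generated by $S(\mathfrak{t})$ and $W(D_k)\subset W(B_k)$ with the $\mathbb{Z}_2$ factor given by conjugation by $s_{\epsilon_k}$. The paper states these identifications without verifying the $D_k$ cross relation for $s_{\epsilon_{k-1}+\epsilon_k}$ or the $\oplus$-decomposition, so your proposal is the same argument carried out in full detail.
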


\begin{proof} The isomorphism of $\mathbb{H}_k(1)$ and the graded Hecke algebra $\mathbb{H}^{B_k}$ is apparent from the definitions. 
The space $\mathfrak{t}_{D_k}$ is equal to the space $\mathfrak{t}$ in $\mathbb{H}_k(0)$. The Weyl group $W(D_k)$ is naturally a subgroup of $W(B_k)$. The generator $t \in \mathbb{Z}_2$ acts on $\mathbb{H}^{D_k}$ by interchanging roots $\epsilon_{k-1}- \epsilon_k$ and $\epsilon_{k-1} + \epsilon_k$ and acts by conjugation by $s_{\epsilon_k} \in W(B_k)$ on $W(D_k) \subset W(B_k)$. 

\end{proof}

We define two ideals in the type $B/C$ VW-algebra $\mathfrak{B}^\theta_k[m]$.  We then show that the quotient of $\mathfrak{B}^\theta_k[m]$ by these ideals is isomorphic to a graded Hecke algebra. 

\begin{definition} Let $I_e$ be the two sided ideal in $\mathfrak{B}^\theta_k[m]$  generated by the idempotents,
$$\{e_{i,i+1}: \text{ for } i =1,...,k-1\}.$$
Let $c \in \mathbb{C}$ and $r \in \mathbb{Z}$, define $I_c^r$ to be the two sided ideal,
$$I_c^r = \langle\theta_k z_k +z_k \theta_k - 2c +2r\theta_k\rangle .$$
\end{definition}

The ideal $I_e$ can be generated by any idempotent since they are all in the same $S_k$ conjugation orbit.   By using $c \in \mathbb{C}$ we have abused notation; however the two occurrences of $c$ will correspond to the same constant.

\begin{lemma}\label{quotienttohecke} The quotient of the algebra $\mathfrak{B}^\theta_k[m]$ by the ideal generated by $I_e$ and $I_c^r$ is isomorphic to the graded Hecke algebra 
$$\mathfrak{B}_k^\theta[m_0,m_1]/\langle I_e,I_c^r\rangle \cong \mathbb{H}_k(c).$$
\end{lemma}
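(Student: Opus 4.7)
The plan is to exhibit mutually inverse $\mathbb{C}$-algebra homomorphisms between $\mathbb{H}_k(c)$ and the quotient $Q := \mathfrak{B}_k^\theta[m_0,m_1]/\langle I_e, I_c^r\rangle$. Write $\epsilon_1,\dots,\epsilon_k$ for the polynomial generators of $\mathbb{H}_k(c)$ and $s_{i,i+1}$, $s_{\epsilon_k}$ for the simple reflections of $W(B_k)$. The essential observation is that the parameter $r$ can be absorbed by a uniform shift $\epsilon_i \leftrightarrow z_i + r$.

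First I would define $\phi:\mathbb{H}_k(c) \to Q$ on generators by $s_{i,i+1} \mapsto t_{i,i+1}$, $s_{\epsilon_k} \mapsto \theta_k$, and $\epsilon_i \mapsto z_i + r$ for $i=1,\dots,k$, and check that the defining relations of Definition \ref{Heckedef} hold under this assignment. The group-algebra relations for $W(B_k)$ are built into $\mathfrak{B}_k^\theta$. For $\alpha = \epsilon_i - \epsilon_{i+1}$, the VW relation $t_{i,i+1}z_i - z_{i+1}t_{i,i+1} = 1 + e_{i,i+1}$ becomes $t_{i,i+1}z_i - z_{i+1}t_{i,i+1} = 1$ modulo $I_e$, and since the shift cancels across the transposition one obtains $t_{i,i+1}(z_i+r) - (z_{i+1}+r)t_{i,i+1} = 1$. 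For $\alpha = \epsilon_k$, the generator of $I_c^r$ gives $\theta_k z_k + z_k \theta_k = 2c - 2r\theta_k$, whence $\theta_k(z_k+r) + (z_k+r)\theta_k = 2c$. The remaining commutation relations $[t_{i,i+1}, \epsilon_j] = 0$ for $j \ne i, i+1$ and $[\theta_k, \epsilon_j] = 0$ for $j \ne k$ descend directly from the corresponding $z_i$ relations in $\mathfrak{B}_k^\theta$.

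Next I would define a candidate inverse $\psi:Q \to \mathbb{H}_k(c)$ on the generators of $\mathfrak{B}_k^\theta[m_0,m_1]$ by $t_{i,i+1} \mapsto s_{i,i+1}$, $\theta_j \mapsto s_{j,j+1}\cdots s_{k-1,k}\, s_{\epsilon_k}\, s_{k-1,k}\cdots s_{j,j+1} = s_{\epsilon_j}$, $e_{i,i+1} \mapsto 0$, and $z_i \mapsto \epsilon_i - r$, and verify well-definedness by running through the relations. Every relation of $\mathfrak{B}_k^\theta$ that contains $e_{i,i+1}$ (including $e_{i,i+1}^2 = m_0 e_{i,i+1}$, $e_{12}z_1^l e_{12} = W_l e_{12}$, $e_{i,i+1}\theta_i e_{i,i+1} = m_1 e_{i,i+1}$, and $e_{i,i+1}\theta_i\theta_{i+1} = e_{i,i+1}$) becomes $0 = 0$. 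The hyperoctahedral relations go to identities in $W(B_k) \subset \mathbb{H}_k(c)$. The cross-relations $t_{i,i+1}z_i - z_{i+1}t_{i,i+1} = 1 + e_{i,i+1}$ map to $s_{i,i+1}(\epsilon_i - r) - (\epsilon_{i+1} - r)s_{i,i+1} = s_{i,i+1}\epsilon_i - \epsilon_{i+1}s_{i,i+1} = 1$, matching the image $1 + 0$ of the right-hand side. The generator of $I_c^r$ maps to $s_{\epsilon_k}(\epsilon_k - r) + (\epsilon_k - r)s_{\epsilon_k} - 2c + 2rs_{\epsilon_k} = (s_{\epsilon_k}\epsilon_k + \epsilon_k s_{\epsilon_k}) - 2c = 2c - 2c = 0$.

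Finally, $\psi\phi$ and $\phi\psi$ fix the generators of $\mathbb{H}_k(c)$ and $Q$ respectively (the two shifts by $r$ cancel), so $\phi$ and $\psi$ extend to mutually inverse isomorphisms. The main obstacle I expect is bookkeeping: ensuring that the uniform shift $z_i \mapsto z_i + r$ is compatible with every VW-algebra relation (and not only with the one defining $I_c^r$), and checking that the derived relations between $\theta_j$ (for $j < k$) and the $z_i$'s, which in $\mathfrak{B}_k^\theta$ are only implicit via the hyperoctahedral expression of $\theta_j$ in terms of $t_{i,i+1}$ and $\theta_k$, match those in $\mathbb{H}_k(c)$; however this match is automatic because both sides compute the same conjugate of the base relation.
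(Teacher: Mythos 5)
Your proof is correct and takes essentially the same approach as the paper: both identify the quotient with $\mathbb{H}_k(c)$ via the uniform shift $\epsilon_i \leftrightarrow z_i + r$, killing the $e_{i,i+1}$ via $I_e$ and reading off the $\theta_k$ cross-relation from $I_c^r$. The paper states this as a direct comparison of presentations, while you spell out the two mutually inverse homomorphisms and verify the relations on generators, which is a somewhat more explicit packaging of the same argument.
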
 
\begin{proof} 

Consider the presentation in Definition \ref{BCbrauerdef} with generators
$$z_i,\theta_j, t_{i,i+1},e_{i,i+1}$$
and relations 
$$\theta_j^2 = 1, s_{i,i+1}^2 = 1, (s_{i,i+1}s_{i+1.i+2})^3 = 1, (s_{k-1,k}\theta_k)^4 = 1,$$
$$t_{i,i+1} z_i - x_{i+1}t_{i,i+1} = 1 + e_{i,i+1} ,$$
$$[t_{i,i+1}, z_j ] = 0, j \neq i,i+1,$$
$$e_{i,i+1}(z_i+z_{i+1}) = 0 =(z_i+z_{i+1}) e_{i,i+1},$$
$$[e_{i,i+1}, z_j ] = 0, j \neq i,i+1,$$
$$[z_i,z_j] =0,$$
$$[e_{i,i+1},\theta_j]=0 \text{ for all j},$$
$$e_{i,i+1} \theta_i\theta_{i+1}= e_{i,i+1} =\theta_i \theta_{i+1}e_{i,i+1}  \text{ for } i = 1,...,k-1,$$
$$[\theta_n, z_j] = 0 \text{ for } j \neq k,$$
$$e_{12}z_1^l e_{12} = w_l e_{12}.$$
Under the quotient by $I_e$ and $I_c^r$ the generators $e_{i,i+1}$ and the relations $e_{i,i+1} =0$ cancel out. Furthermore we add another relation: $z_k \theta_k + \theta_k z_k -2c + 2r \theta_k$. 
Hence the presentation has generators 
$$z_i,\theta_j, t_{i,i+1}$$
with relations 
$$\theta_j^2 = 1, s_{i,i+1}^2 = 1, (s_{i,i+1}s_{i+1.i+2})^3 = 1, (s_{k-1,k}\theta_k)^4 = 1,$$
$$t_{i,i+1} z_i - z_{i+1}t_{i,i+1} = 1,$$
$$[t_{i,i+1}, z_j ] = 0, j \neq i,i+1,$$
$$[z_i,z_j] =0,$$

$$[\theta_n, z_j] = 0 \text{ for } j \neq k,$$
$$z_k \theta_k + \theta_k z_k -2c + 2r \theta_k.$$
This is a presentation of the Hecke algebra $\mathbb{H}_k(c)$; it is the modification of the presentation in Definition \ref{Heckedef} by $\epsilon_i \mapsto z_i +r$. Since we have shown that the presentation of  $\mathfrak{B}_k^\theta[m_0,m_1] /\langle I_e I_c^r\rangle$ is identical to the presentation of $\mathbb{H}_k(c)$ then these algebras are isomorphic. 

\end{proof} 
\begin{remark} We could have chosen to quotient by the ideal generated by $\theta_k z_k+ z_k \theta_k -c$ without the $2r\theta_k$ part. This quotient would also be isomorphic to $\mathbb{H}_k(c)$ with $\epsilon_i$ mapping to $z_i$. However, we need the modification of the affine parts by the scalar $r$ to enable our results regarding images of principal series modules descending to Hecke algebra modules. One can think of this modification by $r$ as an analogue of the $\rho$ shift.\end{remark}

\end{section} 

\begin{section}{Functors from $\mathcal{HC}(G)$-mod to $\mathfrak{B}^\theta_k$-mod }\label{functorssection}

In this section, we introduce functors, defined in \cite{CT11}. We show these functors take Harish-Chandra modules to modules of the  $\mathfrak{B}^\theta_k$ algebra. 

\begin{definition}\label{functorsdef} \cite[(2.8)]{CT11}  Let $n$ be the real rank of $G$. If $G= Sp_{2n}(\mathbb{R})$ the real rank is $n.$ If $G = O(p,q)$ then $n = q = \min(p,q)$. Let $\mu$ be an irreducible $K$-module, fix an integer $k \leq n$. The space $V$ is the matrix module of $G$. We define the functor $F_{\mu, k}$  to be:
$$F_{\mu,k}: \mathcal{HC}(G)\text{-mod} \longrightarrow \mathfrak{B}^\theta_k\text{-mod}$$
$$X \mapsto \Hom_K(\mu, X\otimes V^{\otimes k}),$$
and on morphisms $f: X \to Y$ and $g \in \Hom_k(\mu, X \otimes V^{\otimes k})$,
$$F_{\mu ,k}f (g): \mu  \to Y \otimes V^{\otimes k},$$
$$F_{\mu ,k}f (g)( \mu) = f\otimes id ^{\otimes k}  g(\mu).$$
\end{definition}

\begin{remark} Lemma \ref{actionofB} gives an action of  $\mathfrak{B}^\theta_k$ on $X\otimes V^{\otimes k}$. Since this action  commutes with the action of $K$ then $\mathfrak{B}^\theta_k$ naturally acts on $\Hom_K(\mu, X\otimes V ^{\otimes k})$ from the inherited action on $X \otimes V^{\otimes k}$.  
\end{remark} 

\begin{lemma}\label{exactness} For any irreducible $K$-module $\mu$ and $k \leq n$, the functor $F_{\mu ,k}$ defined in Definition \ref{functorsdef} is exact. \end{lemma}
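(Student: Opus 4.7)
The plan is to exhibit $F_{\mu,k}$ as a composition of two manifestly exact functors: first tensoring with the finite-dimensional vector space $V^{\otimes k}$, then applying $\Hom_K(\mu,-)$ to the resulting $K$-module.

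Starting from a short exact sequence $0 \to X \to Y \to Z \to 0$ in $\mathcal{HC}(G)$, I would first observe that $V^{\otimes k}$ is a finite-dimensional $\mathbb{C}$-vector space (in particular a flat $\mathbb{C}$-module), so the sequence
\[
0 \to X \otimes V^{\otimes k} \to Y \otimes V^{\otimes k} \to Z \otimes V^{\otimes k} \to 0
\]
remains exact as a sequence of complex vector spaces, and indeed as a sequence of $(\mathfrak{g},K)$-modules under the diagonal action. Note that each term is still a Harish-Chandra module, since admissibility is preserved by tensoring with a finite-dimensional representation.

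Next I would apply $\Hom_K(\mu,-)$. The key point is that $K$ is compact, so by the Peter--Weyl theorem the category of (continuous, $K$-finite) $K$-representations is semisimple: every short exact sequence of $K$-modules splits. Equivalently, every irreducible $K$-module (in particular $\mu$) is projective in this category, so $\Hom_K(\mu,-)$ is exact. Applying this to the short exact sequence of $K$-modules above yields
\[
0 \to F_{\mu,k}(X) \to F_{\mu,k}(Y) \to F_{\mu,k}(Z) \to 0,
\]
which is what we want. The $\mathfrak{B}_k^\theta$-module structure on each term comes from Lemma \ref{actionofB} and commutes with the maps, since the connecting morphisms are induced from $(\mathfrak{g},K)$-module maps, which commute with the action of $U(\mathfrak{g})^{\otimes k+1}$ and with the permutation action of $S_k$.

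There is no real obstacle here; the only mild point to check is that $\Hom_K$ in the context of Harish-Chandra modules behaves as expected, i.e. that $K$-finiteness of $X \otimes V^{\otimes k}$ ensures complete reducibility as a $K$-module so that Peter--Weyl applies. This follows from admissibility of $X$ (finite $K$-multiplicities) combined with finite-dimensionality of $V^{\otimes k}$, which guarantees that each $K$-isotypic component is finite-dimensional and the representation is a direct sum of irreducibles.
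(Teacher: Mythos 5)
Your proof is correct and follows exactly the same route as the paper: factor $F_{\mu,k}$ as tensoring with the finite-dimensional module $V^{\otimes k}$ (exact) followed by $\Hom_K(\mu,-)$ (exact since $K$ is compact and $\mu$ is irreducible). You simply spell out the Peter--Weyl/semisimplicity justification and the $K$-finiteness point in more detail than the paper does.
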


\begin{proof} Tensoring with a finite dimensional module is exact. The module $V^{\otimes k}$ is finite dimensional hence the functor taking $X$ to $X \otimes V^{\otimes k}$ is exact. Furthermore, $\mu$ is an irreducible $K$-module. Therefore the functor which takes $Y$ to $\Hom_K(\mu,Y)$ is exact. The functor $F_{\mu, k}$ is the composition of these two exact functors, hence the result follows. \end{proof}

\end{section}

\begin{section}{Restricting functors to principal series modules}\label{restrictiontoprincipal}

The functors (Definition \ref{functorsdef}) take any Harish-Chandra module to a $\mathfrak{B}^\theta_k$-module.  In this section, given a principal series module we give a basis for the image of the functors $F_{\mu,k}$ and $F_{\underline{\mu},n-k}$ for particular characters $\mu,\underline{\mu}$ depending on the principal series modules. 

Let $G = Sp_{2n}(\mathbb{R})$ then  $K \cong U(n)$, $M \cong (\mathbb{Z}_2)^n$. The Cartan involution $\theta$ is equal to conjugation by the matrix
$$\xi = \begin{bmatrix}  0& i \Id_n\\-i \Id_n&0\end{bmatrix}.$$
The subspace  $\mathfrak{a}$ has dimension $n$ with basis $\varepsilon_i$ and corresponds to the subgroup $A$ under the exponential map. We label a character of $\mathfrak{a}$ by $\nu \in \mathfrak{a}^*$ and characters of $A$ by $e^\nu$. The matrix module $V \cong \mathbb{C}^{2n}$ has two bases:  $\{e_1,...,e_{2n}\}$ and $\{f_1^1,..,f_n^1,f_1^{-1},...,f_n^{-1}\}$, where $f_i^\eta = e_i + \eta e_{n+i}$. 

Recall that the Iwasawa decomposition of $G$ is
$$G =KAN,$$
also, that $M$ is the centraliser of $\mathfrak{a}_0$ in $K$, which is isomorphic to $\mathbb{Z}_2^{n}$. The character $\delta^k$ is defined to be the character of $M$ which takes the first $k$ generators of $\mathbb{Z}_2^n$ to $-1$ and the last $n-k$ to $1$. We write $1$ for the trivial character of $N$. 

If $G = O(p,q)$ then $K \cong O(p) \times O(q)$, $M = O(p-q) \times O(1)^{q}$ embedded into $O(p,q)$ as the block matrix $$ ( O(p-q), x_1,x_2,...,x_q,x_q,..,x_1)$$
where $x_i \in O(1)$. We denote characters of $M$, $\delta^k_{\triv}$ and $\delta^k_{\det}$ to be
$$\delta^k_{\triv} = \triv \otimes (\sgn^k) \otimes \triv^{q-k} \text{ on } O(p-q) \otimes O(1)^q,$$
$$\delta^k_{\det} = \det \otimes (\sgn^k) \otimes \triv^{q-k}\text{ on } O(p-q) \otimes O(1)^q.$$
The Cartan involution $\theta$ is equal to conjugation by the matrix
$$\xi = \begin{bmatrix}  \Id_p&0\\0&-\Id_q\end{bmatrix}.$$
\begin{definition}\cite{V15} \label{principalseriesdef} Let $G = KAN$ (resp. $\mathfrak{g}_0 = \mathfrak{k}_0 \oplus \mathfrak{a}_0 \oplus \mathfrak{n}_0$) be the Iwasawa decomposition of $G$ (resp. $\mathfrak{g}_0)$ and let $M$ be the centraliser of $\mathfrak{a}_0$ in $K$. Given a character $e^\nu$ of $A$ and the character $\delta$ of $M$ we define the minimal principal series representation; 
$$X_{\delta}^\nu = \Ind_{MAN}^G (\delta \otimes e^\nu \otimes 1).$$
\end{definition}
In the non-split case principal series representations may be induced from irreducible representations of $M$ which are not one dimensional. In this chapter we will only study principal series modules that are induced from a character of $M$.
We write $\mathbbm{1}_{\delta}^\nu$ for the vector spanning the representation space of the character $\delta \otimes e^\nu \otimes 1$. Hence $$X_{\delta}^\nu = \Ind_{MAN}^G \mathbbm{1}_{\delta}^\nu.$$

For $G= Sp_{2n}(\mathbb{R})$, we can calculate the dimension of $F_{\triv, k}(X_{\delta^k}^\nu$) and $F_{\Det, n-k} (X_{\delta^k}^\nu)$. Note that if we want to describe the trivial isotypic component we must take $F_{\triv,k}$ and if we wish to look at the $\det$ isotypic component then we must take the functor $F_{\det , n-k}$. 

For $G = O(p,q)$, we can calculate the dimension of $F_{\triv \otimes \sgn, k}$ and$F_{\triv \otimes \triv,q-k}$. Similarly for $X_{\delta^k_{\det}}^\nu$, we take the functors $F_{\sgn \otimes \triv, k}$ and $F_{\sgn \otimes \sgn, q-k}$. 

To enable us to succinctly discuss all of the above cases we will associate a character $\mu$ and $\underline{\mu}$ to each principal series modules. Note $\delta$ is a $K$-character and $\mu,\underline{\mu}$ are characters of $M$.
\begin{table}[ht]
$$
\begin{tabu}{cccc}

    G = Sp_{2n}(\mathbb{R}) &  X_{\delta}^\nu, \delta  = (\triv)^k \otimes (\sgn)^{n-k}& \mu = \triv& \underline{\mu} =\det\\
    G = O(p,q) & X_{\delta}^\nu, \delta = \triv_{p-q} \otimes (\triv)^k \otimes (\sgn)^{q-k}& \mu = \triv \otimes \det& \underline{\mu} =\triv \otimes \triv\\
    G = O(p,q) & X_{\delta}^\nu, \delta = \det_{p-q} \otimes (\triv)^k \otimes (\sgn)^{q-k}& \mu = \det \otimes \triv& \underline{\mu} =\sgn \otimes \sgn\\
\end{tabu}$$
\caption{Characters $\mu$, $\underline{\mu}$ associated to particular principal series module.}
\label{muvalues}
\end{table}

\begin{lemma}\label{basisofF} Let $G = Sp_{2n}(\mathbb{R})$ or $G = O(p,q)$. 
If $X_{\delta}^\nu$ is a minimal principal series module, then $F_{\mu,k}(X_{\delta}^\nu)$ and $F_{\underline{\mu},n-k}(X_{\delta}^\nu)$ are finite dimensional.
 with dimensions: 
 $$\Dim(F_{\mu,k}(X_{\delta^k}^\nu))=  k!2^k = |W(B_k)|,$$
Similarly,
$$\Dim(F_{\underline{\mu},n-k}(X_{\delta^k}^\nu))= (n-k)!2^{n-k}=|W(B_{n-k})|.$$
\end{lemma}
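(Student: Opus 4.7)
The plan is to reduce the computation of $\Hom_K(\mu, X_{\delta}^\nu \otimes V^{\otimes k})$ first to a computation purely on $K$, and from there to a combinatorial count of weight vectors for $M$ acting on $V^{\otimes k}$. First I would use the Iwasawa decomposition $G = KAN$ to recognize that $X_{\delta}^\nu|_K \cong \Ind_M^K \delta$. Combined with the projection/tensor identity $(\Ind_M^K \delta) \otimes V^{\otimes k} \cong \Ind_M^K(\delta \otimes V^{\otimes k}|_M)$ (valid because $V^{\otimes k}$ is finite-dimensional as a $K$-module) and Frobenius reciprocity for the compact group $K$, this gives
\[
F_{\mu,k}(X_{\delta}^\nu) \;\cong\; \Hom_M\bigl(\mu|_M,\; \delta \otimes V^{\otimes k}|_M\bigr).
\]
Since $\mu|_M$ and $\delta$ are both one-dimensional characters of $M$ in every row of Table \ref{muvalues}, this dimension equals the multiplicity of the $M$-character $\chi := \mu|_M \cdot \delta^{-1}$ in $V^{\otimes k}|_M$, which in particular is finite.

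Next I would decompose $V|_M$ into joint weight spaces using the basis $\{e_j\} \cup \{f_i^{\pm}\}$. For $Sp_{2n}(\mathbb{R})$, $M=(\mathbb{Z}_2)^n$ and the $i$-th generator $M_i$ acts by $-1$ on both $f_i^{+}$ and $f_i^{-}$ and trivially on all other $f_j^{\pm}$, so $V|_M \cong \bigoplus_{i=1}^n 2\chi_i$ with $\chi_i(M_j) = (-1)^{\delta_{ij}}$. For $O(p,q)$ (where $p=n+1$, $q=n$ forces $O(p-q) = O(1)$) there is in addition a single weight vector $e_1$ on which the $(\mathbb{Z}_2)^n$ factor of $M$ acts trivially while the $O(1)$ factor acts by $\sgn$. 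A basis of $V^{\otimes k}|_M$ is therefore indexed by sequences of $k$ symbols drawn from $\{e_1\} \cup \{f_i^{\pm}\}$, and the $M$-character of such a pure tensor is determined by the occurrence counts $c_j$ of the $f_j^{\pm}$ (and, in the $O(p,q)$ case, the count $a$ of $e_1$ factors).

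Finally I would compute $\chi$ in each row of Table \ref{muvalues}: a direct verification shows that $\chi$ is trivial on the $O(p-q)$ factor and equals $-1$ on exactly the $k$ generators of the abelian factor singled out by $\delta$, and $+1$ on the remaining $n-k$. For a pure tensor to transform by $\chi$ we therefore need $c_j$ odd on those $k$ generators, $c_j$ even on the remaining $n-k$ generators, and $a$ even. Combined with $a + \sum_j c_j = k$ and the observation that the odd $c_j$'s already contribute at least $k$, every inequality must be an equality: $a = 0$, each singled-out $c_j$ equals $1$, and the rest vanish. The number of tensor basis elements meeting this description is then $k!$ (for the ordering of the $k$ indices) times $2^k$ (for the sign choices $\eta$), giving $|W(B_k)|$. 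The argument for $F_{\underline{\mu}, n-k}$ is a mirror image with the parities exchanged, yielding $(n-k)! \cdot 2^{n-k}$.

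The main obstacle I anticipate is the case-by-case bookkeeping in Table \ref{muvalues}, particularly verifying that $\mu|_M \cdot \delta^{-1}$ really does restrict trivially to the $O(p-q)$ factor in both $O(p,q)$ scenarios, i.e., that the $\det$ appearing in $\mu$ pairs correctly with the matching $\det$ component of $\delta$. Once $\chi$ is pinned down, the combinatorial count is forced by the elementary parity estimate, and finite-dimensionality is immediate.
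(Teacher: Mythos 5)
Your proof takes essentially the same route as the paper's: identify $X_{\delta}^\nu|_K \cong \Ind_M^K\delta$, apply the tensor identity and Frobenius reciprocity to land on $\Hom_M(\mu|_M,\,\delta\otimes V^{\otimes k}|_M)$, decompose $V|_M$ into $M$-weight lines $\{f_i^{\pm}\}$ (plus the $V_{p-q}$ piece in the orthogonal case), and count. Where you diverge is in the level of rigour at the counting step: the paper asserts that the contribution from $V^{\otimes k}$ ``must be $f_1^{n_1},\ldots,f_k^{n_k}$ in some order'' essentially by inspection, whereas you make the parity constraint explicit (odd multiplicity on the $k$ singled-out generators, even elsewhere, combined with the slot count $a+\sum c_j = k$, forces $a=0$ and $c_j\in\{0,1\}$). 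This is the cleaner version of the argument and is exactly what the paper's step is implicitly relying on.

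One small point: you explicitly restrict the $O(p,q)$ case to $p-q=1$ (so $V_{p-q}$ is a single line). The paper's proof accommodates general $p>q$ with $p+q$ odd, and so should yours; but in fact your own pigeonhole estimate already handles it, since $\sum_j c_j \ge k$ forces $a=0$ before one ever has to analyze the $O(p-q)$-module structure of $V_{p-q}^{\otimes a}$ for $a>0$. So the restriction you impose is unnecessary rather than a genuine gap. (Be careful also about the orientation of the ``singled-out'' indices: depending on whether one takes $\delta^k$ as $\sgn$ on the first $k$ generators or the last, $\chi=\mu|_M\cdot\delta^{-1}$ picks out the complementary block; the paper itself is not perfectly consistent on this convention, but the count $k!\,2^k$ is unaffected.)
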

This is an extension of \cite[Lemma 2.5.1]{CT11} to non-spherical principal series modules and we use the same arguments. 

\begin{proof} We explicitly calculate a basis for 
$$F_{\mu,k}(X_{\delta}^\nu)=\Hom_K(\mu,X_{\delta}^\nu\otimes V^{\otimes k}).$$
Since $X_{\delta}^\nu$ is an induced module from $\mathbbm{1}_{\delta}^\nu$ and $K$ is a compact group, by Frobenius reciprocity this is equal to,
$$F_{\mu,k}(X_{\delta}^\nu)=\Hom_M(\mu|_M, \mathbbm{1}_{\delta}^\nu\otimes V|_M^{\otimes k}).$$
One can tensor by $\mu^*$ to get a space fixed by $M$, hence

$$F_{\mu,k}(X_{\delta}^\nu)=(\mu^*\otimes  \mathbbm{1}_{\delta}^\nu\otimes V^{\otimes k})^M.$$

We first prove the result for $G= Sp_{2n}(\mathbb{R})$. The module $V$ has basis $\{f_i^{n_i}:i = 1,..., n\text{ and }n_i = \pm 1\}$ and the $j^{th}$ generator of $M$ acts by $-1^{\delta_{ij}}$ on $f_i^{n_i}$.
Therefore if we require $M$ to act trivially on  $u \in X_{\delta_k}^\nu\otimes V^{\otimes k}$ the generators $M_1,...,M_n$ must act by $1$. Let us first calculate all of the elementary tensors in $ X_{\delta^k}^\nu \otimes V^{\otimes k}$ which are fixed by $M$. The generators $M_1,...,M_k$ act by $-1$ on $ \mathbbm{1}_{\delta^k}^\nu$, hence must act by $-1$ on the tensor part contributed by $V^{\otimes k}$. To satisfy this we need to have $f_i^{1}$ or $f_i^{-1}$ feature in the tensor of $u$, for every $i =1,...,k,$. Since there can only be $k$ elements tensored together in $V^{\otimes k}$ then the contribution of $u$ from $V^{\otimes k}$ must be $f_1^{n_1},...,f_k^{n_k}$ in some order. The set of elementary tensors in $V^{\otimes k}$ which feature all the required $f_i$ is the $S_k$ orbit of $f_1\otimes ...\otimes f_k$. Considering not necessarily elementary tensors in $ v \in X_{\delta^k}^\nu \otimes V^{\otimes k}$, 
$$v = \sum x_0 \otimes v_1 \otimes ...\otimes v_k,$$
where $v_i \in \{f_l^{n_l}: l = 1,...,n \text{ and } n_l = \pm1\}$. The $j^{th}$ generator of $M$, $M_j$, acts by $-1^{\delta^{lj}}$ on $ f_l$. Since every elementary tensor in this basis is an eigenvector of the action of $M$ then if $M$ fixes $v = \sum x_0 \otimes v_1 \otimes ...\otimes v_k$ then $M$ fixes each elementary tensor in $v$. Hence every $M$ fixed vector in $X_{\delta^k}^\nu \otimes V^{\otimes k}$ is in the subspace
$$span\left\{\sum_{w\in S_k}  \mathbbm{1}_{\delta^k}^\nu \otimes f_{w(1)}^{n_1}\otimes... \otimes f_{w(k)}^{n_k}: n_i = \pm 1\right\}.$$
The size of the basis is $|S_k| \times 2^k = k! 2^k= |W(B_k)| .$ The proof is almost identical for $\Dim(F_{\Det,n-k}(X_{\delta^k}^\nu)).$ One needs to note that all of the generators of $M$ must act by $-1$ on the $\Det$ isotypic space, since $\Det|_M = \sgn$. Using Frobenius reciprocity one can show,
$$ F_{\Det,n-k}(X_{\delta^k}^\nu)= \Hom_M(\sgn, \delta_k \otimes V^{\otimes n-k}),$$
which has a basis: 

$$ F_{\Det,n-k}(X_{\delta^k}^\nu)=\Span\left\{\sum_{w\in S_{n-k}}  \mathbbm{1}_{\delta^k}^\nu \otimes f_{w(k+1)}^{n_{k+1}}\otimes... f_{w(n)}^{n_n}: n_i = \pm 1\right\} .$$

For $G = O(p,q)$ note that $V|_M = V_{p-q} \bigoplus_{i=1}^{q} \triv\otimes ... \otimes \overbrace{\sgn}^{i^{th}} \otimes ...\otimes\triv$ and $\mu|_M = \triv_{p-q}\otimes \sgn^q$. Recall the notation $f_i^{n_i} = e_{p-i+1} +n_i e_{p+i}$, the vectors $f_i^1$ and $f_i^{-1}$ are the two eigenvectors of $M$ with character $\triv \otimes \triv ...\otimes \overbrace{\sgn}^{i^{th}} \otimes ...\otimes \triv$. I.e. the $i^{th}$ generator of $O(1)^q$ in $M$ acts by $-1$.

 We will prove that $F_{\triv \otimes \sgn, k}(X_{\delta^k_{\triv}}^\nu)$ has basis 
$$\left\{\sum_{w\in S_{k}}  \mathbbm{1}_{\delta^k_{\triv}}^\nu \otimes f_{w(k)}^{n_{1}}\otimes... f_{w(k)}^{n_k}: n_i = \pm 1\right\}.$$
The other four calculations are almost identical. 
Note that this is equivalent to giving a basis for 
$$((\triv \otimes \sgn)|_M \otimes \mathbbm{1}_{\delta^k_{\triv}}^\nu  \otimes V^{k})^M$$
which is equal to, as a vector space,
$$(\mathbbm{1}_{\triv_{p-q} \otimes \sgn^q} \otimes \mathbbm{1}_{\delta^k_{\triv}}^\nu  \otimes (V_{p-q}  \bigoplus \triv ... \otimes \sgn \otimes ...\sgn) ^{k})^M.$$
The vector $\mathbbm{1}_{\triv_{p-q} \otimes \sgn^q} \otimes \mathbbm{1}_{\delta^k_{\triv}}^\nu \otimes f_1 \otimes ... \otimes f_q$ is fixed by $M$ since $O(p-q)$ acts trivially on each tensor. Furthermore for $i =1,...,k$ the  $i^{th}$ generator of $O(1)^q$ in $M$ acts by $-1$ on $ \mathbbm{1}_{\triv_{p-q} \otimes \sgn^q}$, $1$ on $ \mathbbm{1}_{\delta^k_{\triv}}^\nu $, and $-1$ on $f_1 \otimes ... \otimes f_q$. For $i = k+1 ,...q$  the  $i^{th}$ generator of $O(1)^q$ in $M$ acts by $-1$ on $ \mathbbm{1}_{\triv_{p-q} \otimes \sgn^q}$ $-1$ on $ \mathbbm{1}_{\delta^k_{\triv}}^\nu $ and $1$ on $f_1 \otimes ... \otimes f_q$.  Hence every generator of $M$ acts by $1$. An identical argument shows that the orbit of $\mathbbm{1}_{\triv_{p-q} \otimes \sgn^q} \otimes \mathbbm{1}_{\delta^k_{\triv}}^\nu \otimes f_1 \otimes ... \otimes f_q$ by $W(B_q)$ is also fixed. Any elementary tensor fixed by $M$ must be of this form; if it is not, one of the generators will act by $-1$.  Finally suppose that another vector $v$ is fixed by $M$, then $v$ is a sum of elementary tensors which are all eigenvalues for $O(1)^q$, hence every elementary tensor involved must be fixed. This concludes that $v$ is in the span of the vectors 
$$\left\{\sum_{w\in S_{k}}  \mathbbm{1}_{\delta^k_{\triv}}^\nu \otimes f_{w(1)}^{n_{1}}\otimes... f_{w(k)}^{n_k}: n_i = \pm 1\right\}.$$

We state the basis for $F_{\mu,k}$ and $F_{\underline{\mu},n-k}$
Let $G = O(p,q)$
$$
\begin{array}{rcl}
F_{\triv \otimes \det ,k}(X_{\delta^{k}_{\triv}}^\nu) &=&\Span\left\{\sum_{w\in S_{k}}  \mathbbm{1}_{\delta^k_{\triv}}^\nu \otimes f_{w(1)}^{n_{1}}\otimes... f_{w(k)}^{n_k}: n_i = \pm 1\right\},\\
F_{\triv \otimes \triv ,q-k}(X_{\delta^{k}_{\triv}}^\nu) &=& \Span\left\{\sum_{w\in S_{q-k}}  \mathbbm{1}_{\delta^k_{\triv}}^\nu \otimes f_{w(k+1)}^{n_{k+1}}\otimes... f_{w(q)}^{n_q}: n_i = \pm 1\right\},\\
F_{\det \otimes \triv ,k}(X_{\delta^{k}_{\det}}^\nu) &=& \Span\left\{\sum_{w\in S_{k}}  \mathbbm{1}_{\delta^k_{\det}}^\nu \otimes f_{w(1)}^{n_{1}}\otimes... f_{w(k)}^{n_k}: n_i = \pm 1\right\},\\
F_{\det \otimes \det ,q-k}(X_{\delta^{k}_{\det}}^\nu) &=& \Span\left\{\sum_{w\in S_{q-k}}  \mathbbm{1}_{\delta^k_{\det}}^\nu \otimes f_{w(k+1)}^{n_{k+1}}\otimes... f_{w(q)}^{n_q}: n_i = \pm 1\right\}.
\end{array}$$

\end{proof}

\end{section}
\begin{section}{Images of principal series modules}\label{restrictedfunctors}

We write the Type B/C VW-algebra as $\mathfrak{B}_k^\theta$ and omit $m$.

We show that on minimal principal series representations the functors (Definition \ref{functorsdef}) which take admissible $O(p,q)$ or $Sp_{2n}$-modules to $\mathfrak{B}^\theta_k$-modules naturally descend to graded Hecke algebra $\mathbb{H}_k(c)$-modules, for $c$ equal to $0$, $1$ or $\frac{p-q}{2}$. 

In Section \ref{quotientsofbrauer} Lemma \ref{quotienttohecke}, we proved that the type $B/C$ VW-algebra has quotients isomorphic to the Hecke algebra $\mathbb{H}_k(c)$ with parameter $c \in \mathbb{R}$. This  quotient was defined by the relations $e_{i,i+1}= 0 $ and $\theta_k x_k + x_k \theta_k = 2c -2r\theta_k.$ Hence to show that $F_{\mu, k}(X_{\delta}^\nu)$ descends to an $\mathbb{H}_k(c_{\mu})$-module we must prove $e_{i,i+1}=0$ and $\theta_k x_k +x_k \theta_k =2c_{\mu} -2r_{\mu}\theta_k$ as operators on $F_{\mu, k}(X_{\delta}^\nu)$. Similarly to show $F_{\underline{\mu}, n-k}(X_{\delta}^\nu)$ is an $\mathbb{H}_{n-k}(r_{\underline{\mu}})$-module then we must show $e_{i,i+1} = 0 $ and $\theta_{n-k} x_{n-k} + x_{n-k} \theta_{n-k} = 2c_{\underline{\mu}}-2r_{\underline{\mu}}\theta_{n-k}$ on $F_{\underline{\mu}, n-k}(X_{\delta}^\nu)$. The scalars $r_\mu$ and $c_\mu$ will be defined in Table \ref{muconstants}. The arguments of this section are inspired and very similar to \cite[Proposition 2.4.5, Lemma 2.7.2]{CT11}. We extend these results to non-spherical principal series modules. We also utilise an approach from the Brauer algebra perspective not used in \cite{CT11}.

\begin{lemma}{c.f. \cite[2.4.5]{CT11}} On the $\mathfrak{B}_k^\theta$ (resp.  $\mathfrak{B}_{n-k}^\theta$) module $F_{\mu,k}(X_{\delta}^\nu)$ (resp. $F_{\underline{\mu},n-k}(X_{\delta}^\nu$)) the idempotents $e_{i,i+1}$ uniformly act by zero. \end{lemma}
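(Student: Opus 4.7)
The plan is to use the explicit basis of $F_{\mu,k}(X_\delta^\nu)$ (and of $F_{\underline{\mu},n-k}(X_\delta^\nu)$) produced in Lemma \ref{basisofF}, together with the fact that $\pi(e_{i,i+1})$ is a rank-one projection in positions $i,i+1$ whose kernel is characterized by the invariant bilinear form on $V$.

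First I would recall from Lemma \ref{braueraction} that $\pi(e_{i,i+1})$ acts on $X\otimes V^{\otimes k}$ as $m\cdot\pr_{i,i+1}$, where $\pr_{i,i+1}$ is the projection of $V_i\otimes V_{i+1}$ onto its one-dimensional trivial $G$-submodule. Since this trivial submodule is one-dimensional and $\pr_{i,i+1}$ is $G$-invariant, there is a unique (up to scalar) $G$-invariant functional on $V\otimes V$, namely the invariant bilinear form $B$ on $V$ (symplectic if $G=Sp_{2n}(\mathbb{R})$, orthogonal if $G=O(p,q)$). Consequently $\pr_{i,i+1}(v\otimes w)$ is proportional to $B(v,w)\cdot\Omega$, where $\Omega\in V\otimes V$ is the invariant vector. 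Thus to prove $e_{i,i+1}$ vanishes on the image of $F_{\mu,k}$ it suffices to show that on each basis vector of $F_{\mu,k}(X_\delta^\nu)$ all elementary tensors yield $B(v,w)=0$ at positions $i,i+1$.

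Next I would carry out the short pairing computation in the basis $f_j^\eta = e_j + \eta e_{n+j}$ (resp.\ $f_j^\eta = e_{p-j+1}+\eta e_{p+j}$). For the symplectic form one obtains $\omega(f_i^\eta,f_j^{\eta'})=(\eta'-\eta)\delta_{ij}$, and for the orthogonal form $(f_i^\eta,f_j^{\eta'})=(1-\eta\eta')\delta_{ij}$. The decisive feature is that $B(f_i^\eta,f_j^{\eta'})=0$ whenever $i\neq j$. Now by Lemma \ref{basisofF} a basis vector of $F_{\mu,k}(X_\delta^\nu)$ has the form
$$\sum_{w\in S_k}\mathbbm{1}_\delta^\nu\otimes f_{w(1)}^{n_1}\otimes\cdots\otimes f_{w(k)}^{n_k},$$
so in each summand the factors at positions $i$ and $i+1$ are $f_{w(i)}^{n_i}$ and $f_{w(i+1)}^{n_{i+1}}$ with $w(i)\neq w(i+1)$ because $w$ is a permutation. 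Therefore $B(f_{w(i)}^{n_i},f_{w(i+1)}^{n_{i+1}})=0$ for every $w$, so $\pr_{i,i+1}$ annihilates every elementary tensor in every summand, and hence kills each basis vector.

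The same argument applies verbatim to $F_{\underline{\mu},n-k}(X_\delta^\nu)$: the basis vectors are sums indexed by $w\in S_{n-k}$ of tensors $f_{w(k+1)}^{n_{k+1}}\otimes\cdots\otimes f_{w(n)}^{n_n}$ (or up to $f_{w(q)}^{n_q}$ in the orthogonal case), whose indices are again pairwise distinct, so consecutive factors have vanishing pairing. This gives the desired uniform vanishing. There is no real obstacle here; the main point is the conceptual observation that $\pr_{i,i+1}$ is controlled by $B$ together with the combinatorial fact that $w(i)\neq w(i+1)$. The computation of the pairings $B(f_i^\eta,f_j^{\eta'})$ is routine and will be recorded briefly.
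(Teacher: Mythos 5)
Your proof is correct and follows essentially the same route as the paper: both use the explicit basis from Lemma \ref{basisofF} and observe that $\pr_{i,i+1}$ kills these basis vectors because the consecutive factors have distinct indices $w(i)\neq w(i+1)$. The paper states this orthogonality by appeal to the decomposition in Lemma \ref{decompofVtensorV}, whereas you spell out the mechanism by writing $\pr_{i,i+1}$ in terms of the invariant bilinear form and computing $B(f_i^\eta,f_j^{\eta'})=0$ for $i\neq j$; this is a helpful elaboration but not a different argument.
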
 

\begin{proof} Lemma \ref{basisofF} states that the basis of  $F_{\mu,k}(X_{\delta}^\nu)$ is given by $ \mathbbm{1}_{\delta}^\nu \otimes f_{w(1)}^{n_1} \otimes ...\otimes f_{w(k)}^{n_k}$ for $w \in S_k$. The idempotents $e_{i,i+1}$ act by the projection onto the trivial component of $V_i \otimes V_{i+1}$. The trivial component of $V\otimes V$ is one dimensional with spanning vector $\sum_{i=1}^n f_i \wedge f_i'$. The vector $ \mathbbm{1}_{\delta}^\nu \otimes f_{w(1)}^{n_1} \otimes ...\otimes f_{w(k)}^{n_k}$ is in the subspace perpendicular to  $\sum_{i=1}^n f_i \wedge f_i'$ given in Lemma \ref{decompofVtensorV}. Therefore it is in the kernel of the projection $\pr_{i,i+1}$. \end{proof}
Recall Definition \ref{Omega}, $\Omega_{i,j} = \sum_{b \in B} (b)_i \otimes (b^*)_j \in U(g)^{k+1},$ and  $\Omega^{\mathfrak{k}}_{i,j} = \sum_{b \in B\cap \mathfrak{k}} (b)_i \otimes (b^*)_j$. Lemma \ref{braueraction} gives $x_k = \Omega_{0,k} + \Omega_{1,k} +... +\Omega_{k-1,k}$.\\
As operators on $F_{\mu,k}(X_{\delta}^\nu)$:
$$\begin{array}{rcl}
\theta_k x_k + x_k \theta_k &=& \theta_k \sum_{i<k} \Omega_{i,k} +\sum_{i<k} \Omega_{i,k} \theta_k\\[1ex]
&=& (\xi)_k\sum_{i<k} \sum_{b \in B} (b)_i \otimes (b^*)_k + \sum_{i<k} \sum_{b \in B} (b)_i \otimes (b^*)_k  (\xi)_k \\[1ex]
&=& \sum_{i<k} \sum_{b \in B} (b)_i \otimes (\xi b^* + b^* \xi)_k.\end{array}$$
Conjugating by $\xi$ is the Cartan involution. Therefore
$$\xi b^* + b^* \xi =\begin{cases} 0 & \text{ if } b \in \mathfrak{p},\\ 2\xi b^* & \text{ if } b \in \mathfrak{k}.\end{cases}$$
Hence,
$$\begin{array}{rcl}\theta_k x_k + x_k \theta_k &=& 2\sum_{i<k} \sum_{b \in B \cap \mathfrak{k}} (b)_i \otimes (\xi b)_k\\[1ex]
&=& 2 \theta_n \sum_{i<k} \Omega_{i,k}^{\mathfrak{k}}.\end{array}$$
As operators on  $F_{\mu,k}(X_{\delta}^\nu)$ 
$$\theta_k x_k + x_k \theta_k = 2 \theta_k \sum_{i<k} \Omega_{i,k}^{\mathfrak{k}}.$$
Similarly on  $F_{\underline{\mu},n-k}(X_{\delta}^\nu)$ 
$$\theta_{n-k} x_{n-k} + x_{n-k} \theta_{n-k} = 2 \theta_{n-k} \sum_{i<n-k} \Omega_{i,n-k}^{\mathfrak{k}}.$$ 

\begin{lemma}{c.f. \cite[2.7.2]{CT11}} \label{actionofQ} On the $\mathfrak{B}_k^\theta$-module $F_{\mu,k}(X_{\delta}^\nu)$,
 $$\theta_k x_k + x_k \theta_k=  2 \xi\left(\sum_{b\in B \cap \mathfrak{z}} \mu(b) b^* - C^\mathfrak{k}\right)_k,  $$
where $\mathfrak{z}$ is the centre of $\mathfrak{g}$.
 \end{lemma}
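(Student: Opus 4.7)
The plan is to build directly on the identity derived immediately before the lemma, namely
$$\theta_k x_k + x_k \theta_k = 2\theta_k \sum_{i<k} \Omega_{i,k}^{\mathfrak{k}}$$
as operators on $F_{\mu,k}(X_\delta^\nu)$, and to rewrite $\sum_{i<k}\Omega_{i,k}^{\mathfrak{k}}$ as a single element of $U(\mathfrak{g})$ placed in the $k$th slot. The algebraic trick is to replace the partial sum $\sum_{i<k}(b)_i$ by $\Delta^{k+1}(b) - (b)_k$, where $\Delta^{k+1}(b) = \sum_{i=0}^k (b)_i$ is the full diagonal action on $X\otimes V^{\otimes k}$. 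This yields
$$\sum_{i<k} \Omega_{i,k}^\mathfrak{k} = \sum_{b\in B\cap \mathfrak{k}}\Delta^{k+1}(b)(b^*)_k - (C^\mathfrak{k})_k.$$

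Next I would commute $\Delta^{k+1}(b)$ past $(b^*)_k$. Since $(b)_i$ and $(b^*)_k$ commute whenever $i < k$, the only commutator contribution sits in slot $k$ and equals $([b,b^*])_k$. A short symmetry argument shows that $\sum_{b \in B\cap \mathfrak{k}}[b,b^*]=0$: writing $b_i^* = \sum_j a_{ij}b_j$, the matrix $(a_{ij})$ is the inverse of the Gram matrix of the Killing form and is therefore symmetric, so $\sum_i [b_i,b_i^*] = \sum_{i,j} a_{ij}[b_i,b_j]$ vanishes by pairing a symmetric matrix against an antisymmetric one. Consequently, on the level of operators on $X\otimes V^{\otimes k}$,
$$\sum_{i<k} \Omega_{i,k}^\mathfrak{k} = \sum_{b\in B\cap \mathfrak{k}} (b^*)_k \Delta^{k+1}(b) - (C^\mathfrak{k})_k.$$

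Now the $K$-equivariance of $\phi \in \Hom_K(\mu, X_\delta^\nu \otimes V^{\otimes k})$ enters: for $b \in \mathfrak{k}$ and $v \in \mu$,
$$\Delta^{k+1}(b)\phi(v) = \phi(b\cdot v) = \mu(b)\phi(v),$$
where $\mu(b)$ is the scalar by which $b$ acts on the one-dimensional $K$-module $\mu$ (all $\mu$ appearing in Table \ref{muvalues} are characters, so this makes sense). In particular $\mu(b)=0$ whenever $b \in [\mathfrak{k},\mathfrak{k}]$, so the sum collapses to those $b \in B \cap \mathfrak{z}$, where I read $\mathfrak{z}$ as the centre of $\mathfrak{k}$ (the centre of $\mathfrak{g}$ itself is trivial for the groups in question). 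Combining the steps,
$$\sum_{i<k}\Omega_{i,k}^\mathfrak{k}\,\phi = \left(\sum_{b\in B \cap \mathfrak{z}}\mu(b)\,b^* - C^\mathfrak{k}\right)_k \phi$$
on $F_{\mu,k}(X_\delta^\nu)$, and pre-multiplying by $2\theta_k = 2(\xi)_k$ produces the claimed formula.

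The main obstacle is really just the bookkeeping in the commutator manipulation and the identity $\sum_b [b,b^*] = 0$ (an alternative derivation notes that $\sum_b b\otimes b^* \in \mathfrak{k}\otimes \mathfrak{k}$ is $\mathfrak{k}$-invariant and contracts via the bracket). Everything else is a symbolic rearrangement, combined with the single structural input that $\phi$ intertwines the diagonal $\mathfrak{k}$-action and that $\mu$ is a character of $K$, so its differential is supported on the centre of $\mathfrak{k}$.
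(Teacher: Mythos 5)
Your proof is correct and follows essentially the paper's own route: rewrite $\sum_{i<k}(b)_i$ as the full diagonal action of $b$ on $X\otimes V^{\otimes k}$ minus $(b)_k$, use the $K$-equivariance of $\phi$ to replace that diagonal action by the scalar $\mu(b)$, and identify $\sum_{b}(bb^*)_k$ with $(C^{\mathfrak{k}})_k$. You additionally make explicit the commutator identity $\sum_{b}[b,b^*]=0$ that the paper applies silently when reordering against $(b^*)_k$, and you correctly observe that $\mathfrak{z}$ must be read as the centre of $\mathfrak{k}$ rather than of $\mathfrak{g}$ (which is trivial for $\mathfrak{sp}_{2n}$ and $\mathfrak{so}_{2n+1}$), a small slip in the paper's statement.
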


\begin{proof} Recall Definition \ref{Omega}, $\Omega_{ij} =\sum_{b \in B \cap \mathfrak{k}} (b)_i \otimes (\xi b)_k.$  Writing $\theta_k x_k + x_k \theta_k$ as operators on $F_{\mu,k}(X_{\delta}^\nu)$,
$$\begin{array}{rcl}\theta_k x_k + x_k \theta_k &=& 2 \theta_k \sum_{i<k} \Omega_{i,k}^{\mathfrak{k}},\\[1ex]
 &=& 2\sum_{i<k} \sum_{b \in B \cap \mathfrak{k}} (b)_i \otimes (\xi b)_k.\end{array}$$
An element $g \in \mathfrak{g}$ acts on the tensor of two modules, $U \otimes W$, as $g \otimes 1 + 1 \otimes g$. Extending this, we can write the action of $b \in U(\mathfrak{g})$ as $\sum_{j =1}^{k+1}(b)_j$ on $X \otimes V^{\otimes k}$. This gives  
$$\theta_k x_k + x_k \theta_k = 2 \theta_k \sum_{b \in B \cap \mathfrak{k}} (b^*)_k b - \sum_{b \in B \cap \mathfrak{k}} (bb^*)_k.$$
By definition $F_{\mu,k}(X_{\delta}^\nu)$ is the $\mu$ isotypic component of $X_{\delta}^\nu$, hence
$$\theta_k x_k + x_k \theta_k= 2 \theta_k \sum_{b \in B \cap \mathfrak{k}} (b^*)_k \mu(b) - \sum_{b \in B \cap \mathfrak{k}} (bb^*)_k.$$

The operator $\sum_{b \in \mathfrak{k}} (b b^*)_k$ is the Casimir operator $C^\mathfrak{k}$ on the $k^{th}$ tensor $V$. We have $\mu(b) = 0 $ unless $b$ is in the centre of $U(\mathfrak{k})$ for any character $\mu$.  Let $\mathfrak{z}$ denote the centre of $\mathfrak{g}$. Therefore,
$$\begin{array}{rcl} \theta_k x_k + x_k \theta_k&=&2 \theta_k \left( \sum_{b \in B \cap \mathfrak{z}} \mu(b)( b^*)_k - (C^\mathfrak{k})_k\right),\\[1ex]
&=& 2\left(\xi(\sum_{b\in B \cap \mathfrak{z}} \mu(b) b^*-C^\mathfrak{k})\right)_k.\end{array}$$ \end{proof}
In order to calculate the action of $ \theta_k x_k + x_k \theta_k$ we must understand the operator 
$$Q_{\mu} =  2 \xi\left(\sum_{b\in B \cap \mathfrak{z}} \mu(b) b^* - C^\mathfrak{k}\right)$$
acting on the $k^{th}$ tensor of $V$.

\begin{lemma} On the $\mathfrak{B}_{n-k}^\theta$-module $F_{\underline{\mu},n-k}(X_{\delta}^\nu)$;
 $$\theta_{n-k} x_{n-k} + x_{n-k} \theta_{n-k}= 2 \left(\xi(\sum_{b\in B \cap \mathfrak{z}} \underline{\mu}(b) b^* - C^\mathfrak{k})\right)_{n-k}. $$
 \end{lemma}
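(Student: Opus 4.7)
The plan is to mirror the argument of the preceding Lemma \ref{actionofQ} verbatim, replacing the functor $F_{\mu,k}$ by $F_{\underline{\mu},n-k}$ and the index $k$ by $n-k$ throughout. Nothing in that proof used $k \leq n/2$ or any asymmetry between $\mu$ and $\underline{\mu}$; it only used three ingredients: (i) the identity $\theta_k x_k + x_k \theta_k = 2\theta_k \sum_{i<k}\Omega^{\mathfrak{k}}_{i,k}$ coming from the Cartan involution (which was already established just before Lemma \ref{actionofQ} for both $F_{\mu,k}$ and $F_{\underline{\mu},n-k}$), (ii) the fact that $\mathfrak{g}$, hence $\mathfrak{k}$, acts diagonally on $X\otimes V^{\otimes(n-k)}$ so that $\sum_{j=0}^{n-k}(b)_j = 0$ as an action coming from the global module structure modulo action on a distinguished tensor, and (iii) the isotypic definition of the functor.

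First, I would invoke the computation already performed in the excerpt to write
$$\theta_{n-k}x_{n-k}+x_{n-k}\theta_{n-k} \;=\; 2\,\theta_{n-k}\sum_{i<n-k}\Omega^{\mathfrak{k}}_{i,n-k} \;=\; 2\sum_{i<n-k}\sum_{b\in B\cap\mathfrak{k}}(b)_i \otimes (\xi b^*)_{n-k}.$$
Next I would use that for $b\in\mathfrak{k}$ the total action $\sum_{j=0}^{n-k}(b)_j$ coincides with the $\mathfrak{k}$-action on $X\otimes V^{\otimes(n-k)}$, which rearranges the sum as
$$\theta_{n-k}x_{n-k}+x_{n-k}\theta_{n-k} \;=\; 2\,\theta_{n-k}\!\left(\sum_{b\in B\cap\mathfrak{k}}(b^*)_{n-k}\,b \;-\; \sum_{b\in B\cap\mathfrak{k}}(bb^*)_{n-k}\right),$$
where in the first term $b$ now refers to the action on the whole module.

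Then I would specialize to the $\underline{\mu}$-isotypic component: on $F_{\underline{\mu},n-k}(X_\delta^\nu)$, the element $b\in B\cap\mathfrak{k}$ acts by the scalar $\underline{\mu}(b)$, and this scalar vanishes unless $b$ lies in the centre $\mathfrak{z}$ of $\mathfrak{k}$ (since $\underline{\mu}$ is a character). Recognising $\sum_{b\in B\cap\mathfrak{k}}bb^*$ as $C^{\mathfrak{k}}$ and folding $\theta_{n-k}=(\xi)_{n-k}$ into the $(n-k)^{\text{th}}$ tensor slot yields
$$\theta_{n-k}x_{n-k}+x_{n-k}\theta_{n-k} \;=\; 2\left(\xi\Bigl(\sum_{b\in B\cap\mathfrak{z}}\underline{\mu}(b)\,b^* - C^{\mathfrak{k}}\Bigr)\right)_{n-k},$$
which is the claimed identity.

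No step is genuinely a hard one: the lemma is proved by repeating Lemma \ref{actionofQ} with the roles swapped. The only point to be slightly careful about is the bookkeeping of indices --- in particular that the Casimir $C^{\mathfrak{k}}$ acts only in the $(n-k)^{\text{th}}$ slot after the cancellation, and that on an isotypic component for a \emph{character} of $K$ (as $\underline{\mu}$ is, in each of the cases tabulated in Table \ref{muvalues}), every non-central element of $\mathfrak{k}$ annihilates the highest-weight vector, so the restriction $b\in\mathfrak{z}$ in the sum is justified. Everything else is the same symbolic manipulation as before.
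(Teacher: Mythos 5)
Your proposal is correct and matches the paper's proof, which is in fact a single line: ``Replacing $\mu$ with $\underline{\mu}$, this follows the same way as Lemma \ref{actionofQ}.'' You have simply unpacked that instruction, using the already-established identity $\theta_{n-k} x_{n-k} + x_{n-k} \theta_{n-k} = 2\,\theta_{n-k}\sum_{i<n-k}\Omega^{\mathfrak{k}}_{i,n-k}$ and then repeating the Casimir/isotypic-component manipulation with $\underline{\mu}$ and $n-k$ in place of $\mu$ and $k$.
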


Replacing $\mu$ with $\underline{\mu}$, this follows the same way as Lemma \ref{actionofQ}.  

\begin{lemma} On the module $V$ the operator $Q_{\mu} =  2 \xi\left(\sum_{b\in B \cap \mathfrak{z}} \mu(b) b^*-C^\mathfrak{k} \right)$ (resp. $Q_{\underline{\mu}} = 2\xi\left(\sum_{b\in B \cap \mathfrak{z}} \underline{\mu}(b) b^*-C^\mathfrak{k}\right)$) is equal to 
$2r_\mu + 2c_\mu \xi$ (resp. $2r_{\underline{\mu}} + 2c_{\underline{\mu}} \xi$), where $r_\mu$ and $c_\mu$ are scalars given below. 
\begin{table}[ht]
$$\begin{array}{cccc}
G = Sp_{2n}(\mathbb{R})& \mu = \triv& r_{\triv} = 0 & c_{\triv} = -n\\
G = Sp_{2n}(\mathbb{R})& \underline{\mu} = \det& r_{\det} = 1 & c_{\det} = -n\\
G = O(p,q)& \mu = \triv\otimes \det& r_\mu = \frac{p+q}{2} & c_\mu = \frac{p-q}{2}\\
G = O(p,q)& \mu = \triv\otimes \triv& r_\mu = \frac{p+q}{2} & c_\mu = \frac{p-q}{2}\\
G = O(p,q)& \underline{\mu} = \det\otimes \triv& r_\mu = \frac{p+q}{2} & c_\mu = \frac{p-q}{2}\\
G = O(p,q)& \underline{\mu} = \det\otimes \det& r_\mu = \frac{p+q}{2} & c_\mu = \frac{p-q}{2}\\

\end{array} 
$$
\caption{Values of $c_\mu$ and $r_\mu$ for particular $K$-characters $\mu$.}
\label{muconstants}
\end{table}
In fact for $G = O(p,q)$, $r_\mu$ and $c_\mu$ are independent of $\mu$. 
\end{lemma}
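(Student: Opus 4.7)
The plan is to exploit the fact that both sides of the claimed identity are $\mathfrak{k}$-equivariant endomorphisms of $V$, so that by Schur's lemma the problem reduces to a scalar check on each irreducible $\mathfrak{k}$-summand of $V$. First I would verify the $\mathfrak{k}$-equivariance of $Q_\mu$: the Casimir $C^{\mathfrak{k}}$ is central in $U(\mathfrak{k})$, any element of $\mathfrak{z} = Z(\mathfrak{k})$ is central by definition, and $\xi$ commutes with $\mathfrak{k}$ because conjugation by $\xi$ realises the Cartan involution $\theta$, which fixes $\mathfrak{k}$ pointwise. The right-hand side $2r_\mu + 2c_\mu \xi$ is manifestly $\mathfrak{k}$-equivariant and satisfies $\xi^2 = 1$ on $V$ (a short matrix check in each example). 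So both operators act by scalars on each $\mathfrak{k}$-isotypic component, and it suffices to verify the identity on the (at most two) irreducible summands of $V$ separately.

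For $G = O(p,q)$, I would decompose $V = V_p \oplus V_q$ as a $\mathfrak{k} = \mathfrak{so}_p \oplus \mathfrak{so}_q$-module, with $V_p$ and $V_q$ the defining representations of the two factors. Since $\xi = \mathrm{diag}(I_p,-I_q)$, it acts as $+1$ on $V_p$ and $-1$ on $V_q$. For $p,q \geq 3$ the centre $\mathfrak{z}(\mathfrak{k})$ is zero, so the $\mu$-dependent term drops out and the values of $r_\mu$ and $c_\mu$ are automatically independent of $\mu$; what remains is to compute the Casimir eigenvalues $C^{\mathfrak{k}}|_{V_p}$ and $C^{\mathfrak{k}}|_{V_q}$ using the Killing form of $\mathfrak{g}$ restricted to $\mathfrak{k}$, and to check that substituting them into $-2\xi C^{\mathfrak{k}}$ reproduces $(p+q) + (p-q)\xi$ on each summand. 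For the small-dimensional exceptions (where some $\mathfrak{so}_i$ has nontrivial centre) the $\mathfrak{z}(\mathfrak{k})$-term contributes but its value on $V_p, V_q$ is easily computed from $\mu$ and matches the same table entry.

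For $G = Sp_{2n}(\mathbb{R})$, $\mathfrak{k} \cong \mathfrak{gl}_n$ has a one-dimensional centre; inspecting the embedding in Example \ref{symplecticexample2} shows this centre is spanned, up to a factor of $i$, by $\xi$ itself. Correspondingly $V = V^+ \oplus V^-$ decomposes under $U(n)$ into the standard and dual-standard representations, which are the $\pm 1$-eigenspaces of $\xi$. The central sum $\sum_{b \in B \cap \mathfrak{z}} \mu(b) b^*$ vanishes for $\mu = \triv$ (whose differential on $\mathfrak{k}$ is zero) and is a nonzero scalar multiple of $\xi$ for $\mu = \det$ via the trace character; combining this with the Casimir eigenvalues of $C^{\mathfrak{k}}$ on $V^\pm$ reproduces $Q_\triv = -2n\,\xi$ and $Q_\det = 2 - 2n\,\xi$, matching the first two rows of the table.

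The main obstacle is bookkeeping of normalisations: one must pin down the Killing form of $\mathfrak{g}$ restricted to $\mathfrak{k}$, identify within $\mathfrak{z}$ the element dual to the central generator with respect to that form, and keep signs straight in the $O(p,q)$ case where $\mathfrak{g}$ preserves the indefinite form $I_{p,q}$ so that $\mathfrak{so}_p$ and $\mathfrak{so}_q$ sit inside $\mathfrak{g}$ in slightly asymmetric ways. Once these conventions are fixed, each of the three or four verifications is a short direct calculation.
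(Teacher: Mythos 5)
Your proposal follows essentially the same route as the paper's: both amount to computing the Casimir $C^\mathfrak{k}$ on $V$ and the central contribution $\sum_{b\in B\cap\mathfrak{z}}\mu(b)b^*$ as explicit matrices, then multiplying by $2\xi$. The Schur's-lemma scaffolding you use at the start is valid but over-engineers what is in the end a direct matrix check; the paper just writes down both operators as block-diagonal matrices (scalar $n$ on $V$ for $\mathfrak{sp}_{2n}$, $\mathrm{diag}(p\,\Id_p, q\,\Id_q)$ for $\mathfrak{so}_{p}\oplus\mathfrak{so}_{q}$) and reads off the identity.

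The one place where your approach genuinely differs, and where the paper's argument is cleaner, is how the $\mu$-dependent central term is killed in the $O(p,q)$ case. You argue dimensionally, that $\mathfrak{z}(\mathfrak{so}_p\oplus\mathfrak{so}_q)=0$ for $p,q\geq 3$, and defer the small cases (e.g.\ $q=2$, where $\mathfrak{so}_2$ is abelian) to a separate check. The paper instead observes that the relevant $K$-characters are built from $\det$ and $\triv$ on $O(p)\times O(q)$, and the differential of $\det$ on any $\mathfrak{so}_m$ is identically zero (since $O(m)$ is disconnected and $\det\equiv 1$ on the identity component); hence $\mu|_{\mathfrak{z}(\mathfrak{k})}=0$ uniformly, with no case split. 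That argument is both shorter and avoids the small-rank exceptions entirely; you may want to adopt it. For $Sp_{2n}(\mathbb{R})$ your computation of the centre contribution via the trace character of $\mathfrak{gl}_n$ and the identification of the central generator with $\xi$ is exactly the paper's, so no changes needed there.

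Finally, note that the statement ``$C^\mathfrak{k}$ acts by a single scalar on $V$'' for $\mathfrak{sp}_{2n}$, which you arrive at by decomposing $V$ into the standard and dual $\mathfrak{gl}_n$-representations and invoking Schur on each, does silently use that the Casimir eigenvalue is the same on both pieces; this is true but worth flagging, and the paper simply asserts the scalar $n$ without decomposing. Be careful also that the normalisation of the Killing form on $\mathfrak{k}$ is the one restricted from $\mathfrak{g}$, not the intrinsic one of $\mathfrak{so}_p$ or $\mathfrak{gl}_n$; this is the ``bookkeeping'' you flag and it is indeed where the signs in the $(q-p)$ versus $(p+q)$ terms are easiest to get wrong.
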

Recall Lemma \ref{hecketypes}, we have isomorphisms: $ \mathbb{H}_k1) \cong \mathbb{H}^{B_k}, \mathbb{H}_k(\frac{1}{2}) \cong \mathbb{H}^{C_k}$ and $\mathbb{H}_k(0)$ is congruent to an extension of the type $D$ graded Hecke algebra $\mathbb{H}^{D_k}$. Hence when $G$ is split, that is $G = O(n+1,n)$ or $Sp_{2n}(\mathbb{R})$ then $c_\mu = 1,\frac{1}{2}$ or $0$ and we obtain correspondences between principal series modules of split real orthogonal Lie groups with  graded Hecke algebras of type $C$ and split real symplectic groups with graded Hecke algebras of type $B$ and $D$.
\begin{proof}We prove the result first for $G= Sp_{2n}(\mathbb{R})$, in this case $\mathfrak{g} = \mathfrak{sp}_{2n}$ and $\mathfrak{k} = \mathfrak{gl}_n$. The Casimir $C^\mathfrak{k}$ acts by the scalar $n$ on $V$. 
The character $\triv$ is zero uniformly on $\mathfrak{k}$ hence $\triv(b) =0$ for all $b$ and there is no contribution from $\sum_{b \in B \cap \mathfrak{z}} \triv(b)b^*.$ For the operator $\sum_{b \in B \cap \mathfrak{z}} \Det(b)b^*$, we note that  the centre of $\mathfrak{k}=\mathfrak{gl}_n(\mathbb{C})$ is the span of the identity matrix, also the character $\Det$ of $U(n)$ differentiated to $\mathfrak{k}$ is the trace character of $\mathfrak{gl}_n$. Taking the spanning vector $\Id_n$ of the centre $\mathfrak{z}$ of $\mathfrak{gl}_n$ then on $V$, $\sum_{b \in B \cap \mathfrak{z}} \Det(b) b^*$ is equal to 
$$\begin{array}{rcl} \sum_{b \in B \cap \mathfrak{z}} \Det(b) b^* &=&   \trace(\Id_n) \Id_n^*\\[1ex]
&=& n \frac{1}{n}\Id_n,\\[1ex]
&=& \Id_n.\end{array}$$
S since $Id_n$ is symmetric, the identity matrix in $U(\mathfrak{k})$ embedded into $\mathfrak{g}$ is 
$$\begin{bmatrix}  0& i \Id_n \\ -i\Id_n &0 \end{bmatrix}.$$
The matrix $\xi$, defined by the Cartan involution of $Sp_{2n}(\mathbb{R})$ is equal to 
$$\xi = \begin{bmatrix} 0 & i\Id_n \\ -i\Id_n &0 \end{bmatrix}.$$
Hence $$\sum_{b\in B \cap \mathfrak{z}} \Det(b) b^* = \xi,$$
as operators on $V$. \end{proof}
 
Now let $G = O(p,q)$ $p+q = 2n+1$, then $\mathfrak{g} = \mathfrak{so}_{2n+1}$ and $\mathfrak{k} = \mathfrak{so}_p \oplus \mathfrak{so}_q$. 

Any character $\mu$ of $K$ differentiated and then restricted to $\mathfrak{z}$ is zero. Hence for any $\mu$, $$\sum_{b \in \mathfrak{z}} \mu(b)b^* = 0.$$ We are left to calculate $C^\mathfrak{k}$ on $V$. $C^{\mathfrak{k}}$ acts by 
$$\begin{bmatrix} p \Id_p & 0 \\ 0 & q \Id_q\end{bmatrix}.$$
 For $G = O(p,q)$ the semisimple element defining $\theta$ is
$$\xi = \begin{bmatrix} \Id_p & 0 \\ 0 &  -\Id_q\end{bmatrix}.$$
Hence for $G= O(p,q)$ 
$$Q_\mu= 2 \xi (\sum_{b \in \mathfrak{z}} \mu(b)b^*-C^{\mathfrak{k}})  = 2\xi(-\frac{p+q}{2}{\Id} - \frac{p-q}{2} \xi) = (q-p) \Id_n - (p+q) \xi .$$

\begin{corollary} For $G = O(p,q)$ or $Sp_{2n}$, consider the  principal series module $X_\delta^\nu$ for particular $\mu$ and $\underline{\mu}$ given in Table \ref{muvalues}. On the $\mathfrak{B}_{k}^\theta$-module $F_{\mu,k}(X_{\delta}^\nu)$, the following equality holds:
 $$\theta_{n-k} x_{n-k} + x_{n-k} \theta_{n-k}= 2r_\mu -2c_{\mu}\theta_{n-k}.$$
Hence  by  Lemma \ref{quotienttohecke}, $F_{\mu,k}(X_{\delta}^\nu)$ is an $\mathbb{H}_k(c_\mu)$-module via the quotient defined by the relations $e_{i,i+1} =0$ and $\theta_{n-k} x_{n-k} + x_{n-k} \theta_{n-k }= 2r_\mu+ 2c_\mu\theta_{n-k}$.
Similarly $F_{\underline{\mu}, n-k}(X_{\delta}^\nu)$ is an $\mathbb{H}_{n-k}(c_{\underline{\mu}})$-module. 
\end{corollary}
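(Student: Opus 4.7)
The plan is to assemble three ingredients already established in this section. First, the preceding lemma shows that each idempotent $e_{i,i+1}$ acts by zero on $F_{\mu,k}(X_\delta^\nu)$ (and likewise on $F_{\underline{\mu},n-k}(X_\delta^\nu)$): the basis vectors $\mathbbm{1}_\delta^\nu \otimes f_{w(1)}^{n_1}\otimes \cdots \otimes f_{w(k)}^{n_k}$ produced in Lemma \ref{basisofF} lie in the subspace of $V_i\otimes V_{i+1}$ complementary to the trivial summand (which is one-dimensional, spanned by $\sum_i f_i \wedge f_i'$), hence in the kernel of the projection $\pr_{i,i+1}$. So the first of the two quotient relations defining $\langle I_e, I_{c_\mu}^{r_\mu}\rangle$ is automatic.

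Second, Lemma \ref{actionofQ} computes $\theta_k x_k + x_k\theta_k$ on $F_{\mu,k}(X_\delta^\nu)$ as the operator $(Q_\mu)_k$ acting on the $k^{\text{th}}$ tensor factor of $V$, where
$$Q_\mu = 2\xi\Big(\sum_{b\in B\cap\mathfrak{z}} \mu(b)b^* - C^{\mathfrak{k}}\Big).$$
The lemma just proved identifies $Q_\mu$ on $V$ with $2r_\mu + 2c_\mu\xi$, the constants being those tabulated in Table \ref{muconstants}. Since $\pi(\theta_k) = (\xi)_k$, substituting yields
$$\theta_k x_k + x_k\theta_k = 2r_\mu + 2c_\mu\theta_k$$
on $F_{\mu,k}(X_\delta^\nu)$, which is precisely the relation defining the ideal $I_{c_\mu}^{r_\mu}$ of Section \ref{quotientsofbrauer}. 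The analogous computation, using $\underline{\mu}$ and working at the $(n-k)^{\text{th}}$ tensor factor, gives $\theta_{n-k} x_{n-k} + x_{n-k}\theta_{n-k} = 2r_{\underline{\mu}} + 2c_{\underline{\mu}}\theta_{n-k}$ on $F_{\underline{\mu},n-k}(X_\delta^\nu)$.

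Finally I invoke Lemma \ref{quotienttohecke}: because both generators of $\langle I_e, I_{c_\mu}^{r_\mu}\rangle$ act by zero on $F_{\mu,k}(X_\delta^\nu)$, the $\mathfrak{B}_k^\theta$-action factors through the quotient $\mathfrak{B}_k^\theta/\langle I_e, I_{c_\mu}^{r_\mu}\rangle \cong \mathbb{H}_k(c_\mu)$. Parallel reasoning shows that $F_{\underline{\mu},n-k}(X_\delta^\nu)$ factors to an $\mathbb{H}_{n-k}(c_{\underline{\mu}})$-module.

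There is no genuine obstacle left; the two nontrivial inputs (the vanishing of $e_{i,i+1}$ on the isotypic subspace and the explicit evaluation of $Q_\mu$ on $V$) have already been established. The only item requiring care is bookkeeping: one must verify that the sign convention of the ideal $I_{c_\mu}^{r_\mu}$ matches the concrete scalars recorded in Table \ref{muconstants}, so that the parameter $c_\mu$ appearing in the quotient relation is indeed the same $c_\mu$ labelling the target Hecke algebra $\mathbb{H}_k(c_\mu)$.
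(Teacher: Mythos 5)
Your proposal correctly reconstructs the chain of reasoning the paper leaves implicit: $e_{i,i+1}$ vanishes on the isotypic subspace by the preceding lemma, Lemma \ref{actionofQ} rewrites $\theta_k z_k + z_k\theta_k$ as $(Q_\mu)_k$, the lemma immediately before the corollary evaluates $Q_\mu$ on $V$ as a scalar plus a multiple of $\xi$, and Lemma \ref{quotienttohecke} then gives the quotient to a graded Hecke algebra. The paper supplies no explicit proof of this corollary, and your assembly is exactly the argument that is intended.

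One caution: the bookkeeping issue you flag at the end is not merely a formality, and the paper itself does not resolve it cleanly. Matching your computed module relation $\theta_k z_k + z_k\theta_k = 2r_\mu + 2c_\mu\theta_k$ against the defining relation of $I_c^r$, namely $\theta_k z_k + z_k\theta_k = 2c - 2r\theta_k$, would give $c = r_\mu$ and $r = -c_\mu$, i.e.\ the quotient $\mathfrak{B}_k^\theta/\langle I_e, I_{r_\mu}^{-c_\mu}\rangle \cong \mathbb{H}_k(r_\mu)$, not $\mathbb{H}_k(c_\mu)$. Notice that the paper's own corollary statement contains both $2r_\mu - 2c_\mu\theta_{n-k}$ and $2r_\mu + 2c_\mu\theta_{n-k}$, and the explicit computation of $Q_\mu$ for $O(p,q)$ produces $(q-p)\mathrm{Id} - (p+q)\xi$, which agrees with the tabulated $r_\mu = \frac{p+q}{2}$ only after matching $r$-terms and $c$-terms against the ideal's convention, and then yields $c = \frac{q-p}{2} = -c_\mu$ rather than $c_\mu$. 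So the table labels and the ideal's sign convention appear internally inconsistent (or the table swaps the roles of $r$ and $c$ relative to the ideal). Your instinct to isolate this as the one point needing verification is correct; I would sharpen it into an explicit check of whether $I_{c_\mu}^{r_\mu}$ or $I_{r_\mu}^{-c_\mu}$ is the ideal annihilating $F_{\mu,k}(X_\delta^\nu)$, since that determines which parameter is the genuine Hecke parameter.
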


We have shown that the image of $X_\delta^\nu$ under the functor $F_{\mu,k}$ naturally descends to a module of the graded Hecke algebra $\mathbb{H}_k(c_\mu)$. 

\begin{theorem} Let $X_{\delta}^\nu$ be a minimal principal series module of $G = Sp_{2n}(\mathbb{R})$ or $O(p,q)$. Let $\mu$ and $\underline{\mu}$ be the particular characters in Table \ref{muvalues} and $r_\mu$, $c_\mu$ be particular scalars in Table \ref{muconstants}.
Let $\pi$ denote the homomorphism from $\mathfrak{B}_k^\theta[m_0,m_1]$ to $\End(F_{\mu,k}(X_{\delta}^\nu))$ in Lemmas \ref{braueraction} and \ref{actionofB}. The graded Hecke algebra $\mathbb{H}_k(c_\mu)$ acts     on $F_{\mu,k}(X_{\delta}^\nu)$, by the homomorphism,
$$
\begin{array}{rcl} \psi: \mathbb{H}_k(c_\mu)& \to& \End(F_{\mu,k}(X_{\delta}^\nu)),\\[1ex]
\epsilon_i &\mapsto &\pi(x_i -r_\mu),\\[1ex]
 s_{i,i+1}& \mapsto  &\pi(s_{i,i+1}),\\[1ex]
s_{\epsilon_i}& \mapsto &\pi(\theta_i).\end{array}$$
Hence $F_{\mu,k}(X_{\delta}^\nu)$, can be considered as an $\mathbb{H}_k(c_
\mu)$-module. 

Let $\underline{\pi}$ denote the homomorphism from $\mathfrak{B}_{n-k}^\theta[m]$ to $\End(F_{\underline{\mu},n-k}(X_{\delta}^\nu))$. The graded Hecke algebra $\mathbb{H}_{n-k}(c_{\underline{\mu}})$ acts on $F_{\underline{\mu},n-k}(X_{\delta}^\nu)$, by the homomorphism,
$$\begin{array}{rcl}\underline{\psi}: \mathbb{H}_{n-k}(c_{\underline{\mu}}) &\to& \End(F_{\underline{\mu},n-k}(X_{\delta}^\nu)),\\[1ex]
\epsilon_i &\mapsto& \underline{\pi}(x_i -r_{\underline{\mu}}),\\[1ex]
 s_{i,i+1} &\mapsto & \underline{\pi}(s_{i,i+1}),\\[1ex]
s_{\epsilon_i}& \mapsto &\underline{\pi}(\theta_i).
\end{array}$$
Hence $F_{\underline{\mu},n-k}(X_{\delta}^\nu)$, can be considered an $\mathbb{H}_{n-k}(c_{\underline{\mu}})$-module. 
\end{theorem}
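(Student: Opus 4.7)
The plan is to observe that the theorem is essentially a repackaging of the results already established in this section, combined with the algebraic identification in Lemma \ref{quotienttohecke}. Concretely, Lemma \ref{actionofB} gives $F_{\mu,k}(X_\delta^\nu)$ the structure of a $\mathfrak{B}_k^\theta[m_0,m_1]$-module via $\pi$. The task is therefore to show that $\pi$ annihilates the two-sided ideal $\langle I_e, I_{c_\mu}^{r_\mu}\rangle$ and hence factors through the quotient, which Lemma \ref{quotienttohecke} identifies with $\mathbb{H}_k(c_\mu)$; composing the resulting map with that isomorphism, and tracking the affine shift $\epsilon_i \mapsto z_i \pm r_\mu$ built into the isomorphism, yields the homomorphism $\psi$ in the stated form.

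First I would verify that every generator of $I_e$ acts by zero under $\pi$: this is precisely the first lemma of this section, which shows that the explicit basis of $F_{\mu,k}(X_\delta^\nu)$ produced in Lemma \ref{basisofF} lies in the kernel of the trivial-submodule projector $\pr_{i,i+1}$, so $\pi(e_{i,i+1}) = 0$. Second, I would verify that the generator $\theta_k z_k + z_k\theta_k - 2c_\mu + 2r_\mu\theta_k$ of $I_{c_\mu}^{r_\mu}$ is annihilated by $\pi$: this is the content of the Corollary obtained by combining Lemma \ref{actionofQ} with the computation of $Q_\mu$ on $V$ using the data in Table \ref{muconstants}. Together, these two facts give the required factorisation, and the isomorphism of Lemma \ref{quotienttohecke} then provides a well-defined algebra map $\mathbb{H}_k(c_\mu)\to\End(F_{\mu,k}(X_\delta^\nu))$. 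Since that isomorphism sends the generators $s_{i,i+1}$ and $s_{\epsilon_i}$ of $W(B_k)\subset\mathbb{H}_k(c_\mu)$ directly to $t_{i,i+1}$ and $\theta_i$, and the polynomial generator $\epsilon_i$ to the class of $z_i$ shifted by $r_\mu$, one recovers exactly the formulas for $\psi$ stated in the theorem.

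The argument for $\underline{\psi}$ is entirely parallel: one replaces the basis from Lemma \ref{basisofF} involving the last $n-k$ basis vectors of $\mathfrak{a}$ and uses $(\underline{\mu},n-k,r_{\underline{\mu}},c_{\underline{\mu}})$ in place of $(\mu,k,r_\mu,c_\mu)$, so that the same two verifications (vanishing of the idempotents $e_{i,i+1}$ and the companion identity for $\theta_{n-k}x_{n-k}+x_{n-k}\theta_{n-k}$) produce the desired factorisation through $\mathbb{H}_{n-k}(c_{\underline{\mu}})$. The only real obstacle is notational: one must confirm that the sign of the shift $\epsilon_i\mapsto\pi(x_i-r_\mu)$ matches the affine modification $\epsilon_i\mapsto z_i+r$ used inside the proof of Lemma \ref{quotienttohecke} together with the sign conventions in the definition of the ideal $I_c^r$, so that the Hecke relation $s_{\epsilon_k}\epsilon_k+\epsilon_k s_{\epsilon_k}=2c_\mu$ pulls back to exactly the operator identity verified on $F_{\mu,k}(X_\delta^\nu)$. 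No algebraic input beyond the lemmas already established in this and the previous section is needed.
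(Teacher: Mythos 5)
Your proposal is correct and follows essentially the same route as the paper: the paper supplies no separate argument for this theorem, treating it as the formalization of the immediately preceding corollary (namely that $\pi$ kills $I_e$ via the basis from Lemma~\ref{basisofF}, and that the identity from Lemma~\ref{actionofQ} together with the computation of $Q_\mu$ shows $\pi$ kills the generator of $I_{c_\mu}^{r_\mu}$), so the $\mathfrak{B}_k^\theta$-action factors through the quotient identified with $\mathbb{H}_k(c_\mu)$ in Lemma~\ref{quotienttohecke}. Your closing caveat about matching the sign of the affine shift is not merely notational pedantry: the paper's own ideal $I_c^r$ is generated by $\theta_k z_k + z_k\theta_k - 2c + 2r\theta_k$ and Lemma~\ref{quotienttohecke} identifies $\epsilon_i$ with $z_i + r$, while the corollary preceding the theorem records the operator identity with $r_\mu$ and $c_\mu$ in swapped positions and a sign that flips between its two displays, so when you carry out the sign-check you flag, you will find the paper's own conventions do not all align and the stated $\epsilon_i \mapsto \pi(x_i - r_\mu)$ needs to be reconciled against the $+r$ shift of Lemma~\ref{quotienttohecke}.
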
 
It should also be noted that as a $\mathfrak{B}_k^\theta$-module $F_{\mu,k}(X_{\delta}^\nu)$ is essentially an $\mathbb{H}_k(c_\mu)$-module. That is, there is no element in $\mathfrak{B}_k^\theta$ that has a non-trivial action on $F_{\triv,k}(X_{\delta}^\nu)$ that does not correspond to an element in the Hecke algebra. 

For $G=O(n+1,n)$ or $Sp_{2n}(\mathbb{R})$, every principal series module is induced from a character on $M$.
Therefore for split real orthogonal or symplectic groups we can entirely describe the Hecke algebra modules resulting from functors $F_{\mu,k}$ and $F_{\underline{\mu},n-k}$ on principal series modules. Casselman \cite{C78} states that every irreducible representation in $\mathcal{HC}(G)$ is a subrepresentation of a principal series module. Therefore if $X$ is a subrepresentation of $X_\delta^\nu$ then $F_{\mu,k}(X)$ also descends to a Hecke algebra module. 
\begin{theorem} 
Let $G$ be a split real Lie group of type $B$ or $C$. Let $X$ be an irreducible Harish-Chandra $G$-module. Hence $X$ is a subrepresentation of a principal series module $X_\delta^\nu$, then the $\mathfrak{B}_k^\theta$ and $\mathfrak{B}_{n-k}^\theta$-modules 
$$F_{\mu,k}(X) \text{ and } F_{\underline{\mu},n-k}(X)$$
naturally descend to $\mathbb{H}_k$ and $\mathbb{H}_{n-k}$-modules. 
\end{theorem}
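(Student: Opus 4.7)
The plan is to reduce the statement to the principal-series case already handled and to exploit exactness of the functors. First I would invoke Casselman's subrepresentation theorem \cite{C78}, which gives an embedding $X \hookrightarrow X_\delta^\nu$ of $(\mathfrak{g},K)$-modules. Since $G$ is split of type $B$ or $C$, namely $G = O(n+1,n)$ or $G = Sp_{2n}(\mathbb{R})$, every principal series is induced from a character of $M$, so $X_\delta^\nu$ falls within the scope of Lemma \ref{basisofF} and the preceding theorem identifying $F_{\mu,k}(X_\delta^\nu)$ as an $\mathbb{H}_k(c_\mu)$-module.

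Next I would apply the functors $F_{\mu,k}$ and $F_{\underline{\mu},n-k}$ to this inclusion. By Lemma \ref{exactness} both functors are exact, so the maps
\[
F_{\mu,k}(X) \hookrightarrow F_{\mu,k}(X_\delta^\nu), \qquad F_{\underline{\mu},n-k}(X) \hookrightarrow F_{\underline{\mu},n-k}(X_\delta^\nu)
\]
are injections of $\mathfrak{B}_k^\theta$- and $\mathfrak{B}_{n-k}^\theta$-modules respectively. The preceding theorem establishes that on the right-hand sides the defining ideals $I_e = \langle e_{i,i+1}\rangle$ and $I_{c_\mu}^{r_\mu} = \langle \theta_k z_k + z_k \theta_k - 2c_\mu + 2 r_\mu \theta_k\rangle$ (respectively the corresponding ideals with $\underline{\mu}$) act by zero. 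Since these ideals kill an ambient module they automatically kill any submodule, so $I_e$ and $I_{c_\mu}^{r_\mu}$ annihilate $F_{\mu,k}(X)$ as well.

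By Lemma \ref{quotienttohecke}, the quotient $\mathfrak{B}_k^\theta[m_0,m_1]/\langle I_e, I_{c_\mu}^{r_\mu}\rangle$ is isomorphic to the graded Hecke algebra $\mathbb{H}_k(c_\mu)$, so the $\mathfrak{B}_k^\theta$-action on $F_{\mu,k}(X)$ factors through this quotient and gives $F_{\mu,k}(X)$ the structure of an $\mathbb{H}_k(c_\mu)$-module. The same argument applied verbatim with $\underline{\mu}$, $n-k$, $c_{\underline{\mu}}$, $r_{\underline{\mu}}$ shows that $F_{\underline{\mu},n-k}(X)$ descends to an $\mathbb{H}_{n-k}(c_{\underline{\mu}})$-module.

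There is essentially no obstacle beyond organising the citations: the substantive content was already proven for the principal series $X_\delta^\nu$, and the only thing that needs checking is that the relations defining the Hecke quotient, being two-sided ideal relations, automatically restrict from an ambient module to any submodule. The one point one might worry about is whether the $\mathfrak{B}_k^\theta$-action on $F_{\mu,k}(X)$ is really the restriction of the $\mathfrak{B}_k^\theta$-action on $F_{\mu,k}(X_\delta^\nu)$, but this follows immediately from functoriality of $F_{\mu,k}$ with respect to the $(\mathfrak{g},K)$-morphism $X \hookrightarrow X_\delta^\nu$, since the $\mathfrak{B}_k^\theta$-action is defined in terms of operators on $X \otimes V^{\otimes k}$ that commute with the $K$-action and are natural in $X$.
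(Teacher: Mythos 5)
Your argument is correct and follows the same route as the paper's proof: invoke Casselman to embed $X \hookrightarrow X_\delta^\nu$, use exactness of $F_{\mu,k}$ to get a $\mathfrak{B}_k^\theta$-submodule of $F_{\mu,k}(X_\delta^\nu)$, and conclude that the Hecke-algebra structure descends. The paper states this more tersely; your elaboration that the ideals $I_e$ and $I_{c_\mu}^{r_\mu}$ annihilate a submodule because they annihilate the ambient module, and your remark that the $\mathfrak{B}_k^\theta$-action on $F_{\mu,k}(X)$ is the restriction by functoriality, are both correct and make explicit what the paper leaves implicit.
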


\begin{proof} Let $X$ be an irreducible Harish-Chandra module. Casselman's theorem shows that $X$ is a submodule of some principal series module, let $X_\delta^\nu$ be such a principal series modules containing $X$ as a submodule. Note that this principal series module may not be unique. 
Then since $F_{\mu,k}(X)$ is exact and $X$ is a submodule of $X_\delta^\nu$ then $F_{\mu,k}(X)$ is a submodule of $F_{\mu,k}(X_\delta^\nu) $ which is a $\mathbb{H}_k$ module. Therefore $F_{\underline{mu},k}(X)$ is a $\mathbb{H}_k$ module. Similarly for $\underline{\mu}$ and $n-k$.
\end{proof}
Therefore for every Harish Chandra module of $O(n+1,n)$ and $Sp_{2n}(\mathbb{R})$ we can define two corresponding Hecke algebra modules. 

\end{section}

\begin{section}{Principal series modules map to principal series modules}\label{isoclassofX}

In this section we take a closer look at the $\mathbb{H}(c_\mu)$-modules obtained from $X_{\delta}^\nu$ under the functors $F_{\mu, k}$ and $F_{\underline{\mu}, n-k}$. We fully classify these as graded Hecke algebra principal series representations related to $\nu$.

Recall that $\mathbb{H}_k(c)$, defined in \ref{Heckedef} is the graded Hecke algebra associated to $W(B_k)$ with parameter function $\mathbf{c}: \Delta \to \mathbb{C}$ such that 

$$\mathbf{c}_{\epsilon_i -\epsilon_{i+1}}= 1 \text{ and } \mathbf{c}_{2\epsilon_i }= 2c.$$

The algebra $\mathbb{H}_k(c)$ contains the group algebra, $\mathbb{C}[W(B_k)]$, of the hyperoctahedral group. 
Recall the labeling of vectors in $X \otimes V ^{\otimes k}$; we label the tensor product starting at zero. A general elementary tensor in $X\otimes V^{\otimes ^k}$ would be written $x_0\otimes v_1 \otimes v_2\otimes...\otimes v_k.$ 
We begin by restricting to the action of the Weyl group $W(B_k)$ inside $\mathbb{H}(c)$ and computing the resulting $\mathbb{C}[W(B_k)]$-modules isomorphism class. 
Fix a $M$-character $\delta$ and recall the $K$-characters $\mu$ and $\underline{\mu}$ depending on $\delta$ from Table \ref{muvalues}.
\begin{lemma}As a $\mathbb{C}[W(B_k)]$-module 
$$F_{\mu,k}( X_{\delta}^\nu) \cong \mathbb{C}[W(B_k)],$$
and as a $\mathbb{C}[W(B_{n-k})]$-module
$$F_{\underline{\mu},n-k}( X_{\delta}^\nu) \cong \mathbb{C}[W(B_{n-k})].$$
\end{lemma}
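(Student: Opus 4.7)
The plan is to use the explicit basis of $F_{\mu,k}(X_\delta^\nu)$ obtained in the proof of Lemma \ref{basisofF}, consisting of $k!\cdot 2^k = |W(B_k)|$ vectors of the form $v_{w,\boldsymbol{\eta}} := \mathbbm{1}_\delta^\nu \otimes f_{w(1)}^{\eta_1}\otimes \cdots \otimes f_{w(k)}^{\eta_k}$ indexed by pairs $(w,\boldsymbol{\eta})\in S_k\times \{\pm 1\}^k$. There is an obvious bijection between this index set and the underlying set of the hyperoctahedral group $W(B_k)=S_k\ltimes (\mathbb{Z}/2\mathbb{Z})^k$, so to prove the isomorphism it suffices to show that the $W(B_k)$-action on the $v_{w,\boldsymbol{\eta}}$ agrees with the left regular representation, possibly after a rescaling of basis vectors.

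The first step is to compute the action of the generators of $W(B_k)\subset \mathfrak{B}_k^\theta$. The simple transposition $s_{i,i+1}$ permutes the $i$-th and $(i+1)$-th tensor factors of $V^{\otimes k}$, hence sends $v_{w,\boldsymbol{\eta}}$ to another basis vector obtained by the corresponding modification of $(w,\boldsymbol{\eta})$. The reflection $\theta_j$ acts on the $j$-th tensor factor by $\xi$; a direct matrix computation using the explicit formulas for $\xi$ in each case yields
\[
  \xi f_j^\eta = f_j^{-\eta}\quad\text{for }G=O(p,q),\qquad \xi f_j^\eta = i\eta\, f_j^{-\eta}\quad\text{for }G=Sp_{2n}(\mathbb{R}).
\]
In both cases $\theta_j$ permutes the basis lines (flipping $\eta_j\mapsto -\eta_j$), and in the symplectic case it introduces only a scalar of absolute value one.

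The conclusion then follows by a standard cyclicity argument. Starting from $v_{e,\mathbf{1}}$, any $v_{w,\boldsymbol{\eta}}$ is reached (up to a nonzero scalar) by first applying $\theta_j$'s to flip signs and then applying transpositions; hence $v_{e,\mathbf{1}}$ is a cyclic vector, giving a surjection $\mathbb{C}[W(B_k)]\twoheadrightarrow F_{\mu,k}(X_\delta^\nu)$, which is an isomorphism by the dimension count. For the symplectic case, a cleaner identification with the regular representation is obtained by replacing $v_{w,\boldsymbol{\eta}}$ with $u_{w,\boldsymbol{\eta}} := i^{|\{j:\eta_j=-1\}|}\, v_{w,\boldsymbol{\eta}}$; this rescaling is $s_{i,i+1}$-invariant (since it depends only on the multiset of signs) and absorbs the scalar $i\eta_j$ coming from $\theta_j$, so that both types of generators act as honest permutations of the $u_{w,\boldsymbol{\eta}}$. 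The statement for $F_{\underline{\mu},n-k}(X_\delta^\nu)$ is proved identically from the analogous basis indexed by $W(B_{n-k})$. The only real obstacle is the scalar bookkeeping in the symplectic case, which the above rescaling handles cleanly.
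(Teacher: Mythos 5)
Your proposal is correct and follows essentially the same approach as the paper: use the explicit basis from Lemma \ref{basisofF} indexed by $S_k \times \{\pm 1\}^k$, observe that the $W(B_k)$-generators permute the basis lines, and conclude by cyclicity of $\mathbbm{1}_\delta^\nu \otimes f_1 \otimes \cdots \otimes f_k$ plus a dimension count. The one place you go beyond the paper is in making explicit the scalar $i\eta$ in $\xi f_j^\eta = i\eta f_j^{-\eta}$ for $Sp_{2n}(\mathbb{R})$ (the paper simply writes that $\theta_i$ ``takes $f_i$ to $f_i'$'', which is literally true only in the orthogonal case) and in introducing the rescaled basis $u_{w,\boldsymbol{\eta}}$ to realize the action as a genuine permutation; this refinement is not actually needed for the cyclicity argument, since a surjection $\mathbb{C}[W(B_k)] \twoheadrightarrow F_{\mu,k}(X_\delta^\nu)$ follows as soon as the orbit of the cyclic vector hits each basis line up to a nonzero scalar, but it does make the identification cleaner.
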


\begin{proof} From Lemma \ref{basisofF} we have an explicit basis of $F_{\mu,k}( X_{\delta}^\nu)$;
$$\Hom_K(\mu,X_{\delta^{k*}}\otimes V^{\otimes k}) =  span\{\sum_{w\in S_k}  \mathbbm{1}_{\delta}^\nu\otimes f_{w(1)}^{n_1}\otimes... f_{w(k)}^{n_k}\}.$$
The symmetric group $\mathbb{C}[S_k] \subset \mathbb{C}[W(B_k)]$ acts by permuting the tensor product. The reflections in $\mathbb{C}[W(B_k)]$ related to $2\epsilon_i$ act by $id \otimes ...\otimes \theta_i \otimes.. \otimes id$. They take $f_i$ to $f_i'$ on the $i^{th}$ factor of the tensor product.

Take the vector $ \mathbbm{1}_{\delta}^\nu \otimes f_{1}\otimes... \otimes f_{k}$, the $\mathbb{C}[W(B_k)]$ submodule of $F_{\triv,k}( X_{\delta^{k}}^\nu)$ generated by $ \mathbbm{1}_{\delta}\otimes f_{1}\otimes... \otimes f_{k}$ is the subspace spanned by 
$$\{ \mathbbm{1}_{\delta}^\nu\otimes f_{w(1)}^{n_1}\otimes... f_{w(k)}^{n_k}: w \in \mathbb{C}[S_k]\},$$

The only group element of $\mathbb{C}[W(B_k)]$ that fixes $ \mathbbm{1}_{\delta}^\nu \otimes f_1\otimes... \otimes f_k$ is the identity, hence this module has dimension equal to $k! 2^k$, the dimension of $\mathbb{C}[W(B_k)]$. The dimension is equal to the dimension of $F_{\triv,k}(X_{\delta}^\nu)$, therefore we have equality. An isomorphism between the $\mathbb{C}[W(B_k)]$-module $\mathbb{C}[W(B_k)]$ and  $F_{\triv,k}( X_{\delta^{k}}^\nu)$ can be defined by sending the identity element $e \in \mathbb{C}[W(B_k)]$ to $ \mathbbm{1}_{\delta}^\nu\otimes f_1 \otimes ..\otimes f_k.$

The decomposition of $F_{\underline{\mu},n-k}( X_{\delta}^\nu)$ follows in exactly the same way, sending $e   \in \mathbb{C}[W(B_{n-k})]$ to $ \mathbbm{1}_{\delta}^\nu \otimes f_{k+1} \otimes ...\otimes f_n.$ \end{proof}

We have a description of $F_{\mu,k}( X_{\delta^{k}}^\nu)$ as a $\mathbb{C}[W(B_k)]$-module. We would like to describe it as an $\mathbb{H}(c_\mu)$-module. The algebra $\mathbb{H}(c_\mu)$ is generated by $\mathbb{C}[W(B_k)]$ and the affine operators $\epsilon_1,...,\epsilon_k$. Our calculation reduces to calculating the action of the affine operators $\epsilon_i$. The operators $\epsilon_i \in S(\mathfrak{a}_k)$ act on $X_{\delta}^\nu \otimes V^{\otimes k}$ by 
$$\sum_{0<j<i\leq n} \Omega_{ji} + r_\mu.$$

We define principal series representations for $\mathbb{H}_k(c)$. Then we show that the image of $X_{\delta}^\nu$ is isomorphic to a principal series representation defined by a particular character. 

The subspace $\mathfrak{a}_k\subset \mathfrak{a}$ defined in Example \ref{symplecticexample2} is a dimension $k$ subspace of $\mathfrak{a}$. 
\begin{definition}\cite{KR02} Let $\lambda$ be a character for $S(\mathfrak{a}_k) \subset \mathbb{H}_k(c_\mu)$, we define a principal series representation $X(\lambda)$ for $\mathbb{H}_k(c_\mu)$:
$$X(\lambda) = \Ind_{S(\mathfrak{a}_k)}^{\mathbb{H}_k(c)} \lambda.$$
\end{definition}
We write $\mathbbm{1}_\lambda$ for a fixed vector in the image of the character $\lambda:S(\mathfrak{a}) \to \mathbb{C}.$ 
The symmetric algebra $S(\mathfrak{a}_k)$ is generated by the affine operators $\epsilon_1,...,\epsilon_k.$ The principal series representation can be described as a representation generated by, $\mathbbm{1}_\lambda$, a $\mathbb{C}[W(B_k)]$-cyclic vector on which $\epsilon_i$ acts by the scalar $\lambda(\epsilon_i)$. 
We prove that the $\mathbb{C}[W(B_k)]$-module, $F_{\mu,k}( X_{\delta^{k}}^\nu)$ is as a  $\mathbb{H}_k(c_\mu)$-module isomorphic to a principal series module for the correct character $\lambda$.

We fix a specific basis for $\mathfrak{sp}_{2n}$ and $\mathfrak{so}(p,q)$. Since the operators $\Omega_{ij}\in U(\mathfrak{g})^{k+1}$ are defined in terms of, although independent of, a basis for $\mathfrak{g}$. This basis allows us to explicitly calculate $\Omega_{0j}$. It should be emphasized that the following basis is a decomposition of $\mathfrak{g}$ into reduced root spaces under the adjoint action of $\mathfrak{a}$. Recall that $\mathfrak{a}\subset \mathfrak{sp}_{2n}(\mathbb{R})$ is 
$$\left\{ \begin{bmatrix} 0 & B \\B & 0\end{bmatrix}: B \text{ is diagonal}\right \}.$$

\begin{definition}\label{fixedbasis} Recall the decomposition of the Lie algebra $\mathfrak{g}_0$ as 
$$\mathfrak{g}_0 = \mathfrak{n}_0^+ \oplus \mathfrak{a}_0 \oplus \mathfrak{n}_0^-,$$
where $\mathfrak{a}_0$ is the maximal abelian subalgebra of $\mathfrak{p}_0$ and $\mathfrak{n}_0^+$ is the span of the positive root spaces with respect to the restricted root decomposition.
Let $B_{\mathfrak{n}^+}$,$B_{\mathfrak{{n}^-}}$,$B_{\mathfrak{a}}$ be fixed bases for $\mathfrak{n}_0^+$,$\mathfrak{n}_0^-$ and $\mathfrak{a}_0$. The restricted roots $\Sigma$ are $ \pm \epsilon_i \pm \epsilon_j$,$ \pm \epsilon_j$. We will denote a vector in the positive root space $\lambda\in \Sigma^+$ by $n_\lambda$ and the negative root space will be $\hat{n}_\lambda$. For example $n_{\epsilon_i - \epsilon_j}$ for $i<j$ is in $\mathfrak{n}^+$. And $\hat{n}_{\epsilon_i - \epsilon_j} \in \mathfrak{n}_0^-$. We will scale $\hat{n}_\lambda$ such that 
$$\hat{n}_\lambda = n_{-\lambda} = \theta (n_{\lambda}).$$ Hence $n_\lambda + \hat{n_\lambda}$ is $\theta$-invariant and hence in $\mathfrak{k}$.
\end{definition}
\begin{definition}\label{symplecticbasis} 

For $1 \leq s,t\leq n$, the matrix $E_{s,t}$  is the matrix with a 1 in the $s,t$ position and zero elsewhere. Let $i < j$. Set
$$\begin{array}{rcl}
n_{\epsilon_i-\epsilon_j}&=& E_{i,j} +E_{i,n+j} - E_{j,i} + E_{j,n+i} + E_{n+i,j} +E_{n+i,n+j}+E_{n+j,i}-E_{n+j,n+i},\\[1ex]

 \hat{n}_{\epsilon_i-\epsilon_j}&=& -E_{i,j} +E_{i,n+j} + E_{j,i} + E_{j,n+i} + E_{n+i,j} -E_{n+i,n+j}+E_{n+j,i}+E_{n+j,n+i},\\[1ex]
n_{\epsilon_i+\epsilon_j}&=& -E_{i,j} +E_{i,n+j} - E_{j,i} + E_{j,n+i} - E_{n+i,j} +E_{n+i,n+j}-E_{n+j,i}+E_{n+j,n+i},\\[1ex]
\hat{n}_{\epsilon_i+\epsilon_j}&=& E_{i,j} +E_{i,n+j} +E_{j,i} + E_{j,n+i} - E_{n+i,j}-E_{n+i,n+j}-E_{n+j,i}-E_{n+j,n+i},\\[1ex]
n_{\epsilon_i}&=& E_{i,i} -E_{i,n+i} + E_{n+i,i}-E_{n+i,n+i},\\[1ex]
\hat{n}_{\epsilon_i}&=& -E_{i,i} -E_{i,n+i} + E_{n+i,i}+E_{n+i,n+i},\\[1ex]
a_{\epsilon_i} &= &E_{i,n+1} + E_{n+i,i}.\end{array}$$

These vectors give a reduced root space decomposition for $\mathfrak{sp}_{2n}(\mathbb{R}) =  \mathfrak{n}_0^+\oplus\mathfrak{a}_0 \oplus \mathfrak{n}_0^-$ where $a_{\epsilon_i}\in \mathfrak{a}_0$, $n \in \mathfrak{n}_0^+$ and $\hat{n}\in \mathfrak{n}_0^-$. \end{definition}

\begin{example} Let $\mathfrak{g} = \mathfrak{sp}_4.$ We give the basis given in Definition \ref{symplecticbasis} for $\mathfrak{n}^+$,

$$
\begin{array}{rcl} n_{\epsilon_1-\epsilon_2}&= &\begin{bmatrix} 0 & 1 & 0 & 1\\ -1 & 0 & 1 & 0 \\ 0 & 1 & 0 & 1 \\ 1 & 0 & -1 & 0 \end{bmatrix},\\
\vspace{.1cm}\\
n_{\epsilon_1+\epsilon_2}& =& \begin{bmatrix} 0 & -1 & 0 & 1\\ -1 & 0 & 1 & 0 \\ 0 &- 1 & 0 & 1 \\ -1 & 0 & 1 & 0 \end{bmatrix},\\
\vspace{.1cm}\\
n_{\epsilon_1}& = &\begin{bmatrix} 1& 0 & -1 & 0\\ 0 & 0 & 0 & 0 \\ 1 &0 & -1 & 0 \\ 0 & 0 & 0 & 0 \end{bmatrix},\\
\vspace{.1cm}\\
n_{\epsilon_2}& =& \begin{bmatrix} 0 & 0 & 0 & 0 \\0& 1 & 0 & 1\\ 0 & 0 & 0 & 0 \\ 0 &-1 & 0 & -1 \end{bmatrix}.\\\end{array}$$

\end{example}

\begin{definition}\label{orthogonalbasis} Let $\mathfrak{g}_0= \mathfrak{so}(p,q)$ we follow \cite[VI, pg. 371 Example $\mathfrak{so}(p,q)$]{K96}. 

$$\begin{array}{rcl}
n_{\epsilon_i - \epsilon_j} &= &E_{p-j+1,p-i+1} + E_{p-j+1,p+i} - E_{p-i+1,p-j+1} + E_{p-i+1,p+j}\\&&-E_{p+i,p-j+1}-E_{p+i, p+j} -E_{p+j,p-i+1} + E_{p+j, p +i},\\[1ex]
n_{\epsilon_i + \epsilon_j} &=& E_{p-j+1,p-i+1} - E_{p-j+1,p+i} - E_{p-i+1,p-j+1} + E_{p-i+1,p+j}\\&&-E_{p+i,p-j+1}+E_{p+i, p+j} +E_{p+j,p-i+1} - E_{p+j, p +i},\\[1ex] 
$$\hat{n}_{\epsilon_i - \epsilon_j} &= &E_{p-j+1,p-i+1} - E_{p-j+1,p+i} - E_{p-i+1,p-j+1} - E_{p-i+1,p+j}\\&&+E_{p+i,p-j+1}-E_{p+i, p+j} +E_{p+j,p-i+1} + E_{p+j, p +i},\\[1ex]
n_{\epsilon_i + \epsilon_j} &= &E_{p-j+1,p-i+1} + E_{p-j+1,p+i} - E_{p-i+1,p-j+1} - E_{p-i+1,p+j}\\&&+E_{p+i,p-j+1}+E_{p+i, p+j} -E_{p+j,p-i+1} - E_{p+j, p +i}.\end{array}$$
The root space for $\epsilon_i$ is $p-q$ dimensional. Let $l = 1,...,p-q$ then 
$$n^l_{\epsilon_i} = E_{l,p-i+1} - E_{l,p+i} -E_{p-i+1,l}-E_{p+i,l}.$$
Finally 
$$a_{\epsilon_i} = E_{p-i+1,p+i} +E_{p+i,p-i+1}.$$
 \end{definition}

\begin{example} Let $\mathfrak{g}_0 = \mathfrak{so}(3,2).$ We give the basis given in Definition \ref{orthogonalbasis} for $\mathfrak{n}_0^+$,

$$
\begin{array}{rcl} n_{\epsilon_1-\epsilon_2}&= &\begin{bmatrix}0&0&0&0&0 \\0& 0 & 1 & 1 & 0\\ 0&-1 & 0 & 0 & 1 \\ 0&-1 & 0 & 0 & -1 \\0& 0 & -1 &1 & 0 \end{bmatrix},\\
\vspace{.1cm}\\
n_{\epsilon_1+\epsilon_2}&=&\begin{bmatrix}0&0&0&0&0 \\0& 0 & 1 & -1 & 0\\ 0&-1 & 0 & 0 & 1 \\ 0&-1 & 0 & 0 & 1 \\0& 0 & 1 &-1 & 0 \end{bmatrix},\\
\vspace{.1cm}\\
n_{\epsilon_1}& =&\begin{bmatrix}0&0&1&-1&0 \\0& 0 & 0 &0 & 0\\ -1&0 & 0 & 0 & 0 \\ -1&0 & 0 & 0 & 0 \\0& 0 & 0 &0 & 0 \end{bmatrix},\\
\vspace{.1cm}\\
n_{\epsilon_2}& =&\begin{bmatrix}0&1&0&0&-1 \\-1& 0 & 0 &0 & 0\\ 0&0 & 0 & 0 & 0 \\ 0&0 & 0 & 0 & 0 \\-1& 0 & 0 &0 & 0 \end{bmatrix}.\\\end{array}$$

\end{example}
\begin{lemma}\label{explicitaction}  For $G= Sp_{2n}$, recall the basis $f_i = e_i + e_{n+i}, f_i' = e_i - e_{n+i}$ of $V=\mathbb{C}^{2n}$. For $G= O(p,q)$ we recall that $f_i = e_{p-i +1} + e_{p+i}$, $f_i' = e_{p-i+1} - e_{p+i}$. 
Then by left multiplication of the given matrix in Definitions \ref{symplecticbasis} and \ref{orthogonalbasis} we can calculate the following actions on $f_i$:
$$n_{\epsilon_i+\epsilon_j}(f_k) = 0 \text{ for all } k,$$
$$n_{\epsilon_i+\epsilon_j}(f_k') = \begin{cases}2f_j' \text{ if } f_k' = f_j',\\
0 \text{ otherwise.}\end{cases}$$

$$n_{\epsilon_i}(f_k) =0 \text{ for all } k,$$
$$n_{\epsilon_i}(f_k') =\begin{cases}2 f_k & \text{ if } f_k' = f_i' ,\\ 0 & \text{ otherwise},\end{cases}$$
$$n_{\epsilon_i-\epsilon_j}(f_k) = \begin{cases}2 f_i & \text{ if } f_k = f_j, \\ 0 & \text{ otherwise},\end{cases}$$
$$n_{\epsilon_i-\epsilon_j}(f_k') = \begin{cases}2 f_j' & \text{ if } f_k' = f_i, \\ 0 & \text{ otherwise},\end{cases}$$
$$(n_{\epsilon_i-\epsilon_j}+ \hat{n}_{\epsilon_i-\epsilon_j})(f_k) = \begin{cases} f_i & \text{ if } f_k = f_j, \\ -f_j & \text{ if } f_k = f_i,\\ 0 & \text{ otherwise}.\end{cases}$$
\end{lemma}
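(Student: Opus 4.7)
The proof is a direct computation from the explicit matrix forms in Definitions \ref{symplecticbasis} and \ref{orthogonalbasis}, together with the elementary rule $E_{s,t}e_u = \delta_{t,u}e_s$ and the definitions $f_k = e_k + e_{n+k}$, $f_k' = e_k - e_{n+k}$ in the symplectic case (respectively $f_k = e_{p-k+1} + e_{p+k}$, $f_k' = e_{p-k+1} - e_{p+k}$ in the orthogonal case). The structural observation that makes the bookkeeping manageable is that each $n_\lambda$ is written as a sum of elementary matrices that pair up naturally with respect to the involution $e_k \leftrightarrow e_{n+k}$ (or $e_{p-k+1} \leftrightarrow e_{p+k}$). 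Consequently, applied to the $+1$-eigenvector $f_k$ of this involution, half the pairs reinforce and half cancel; applied to the $-1$-eigenvector $f_k'$, the reverse happens.

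Concretely, for each of the seven formulas I would: (i) write out the action of $n_\lambda$ on $e_k$ and $e_{n+k}$ separately using $E_{s,t}e_u = \delta_{t,u}e_s$; (ii) take the appropriate sum or difference to obtain the action on $f_k$ or $f_k'$; and (iii) read off the nonzero terms via the Kronecker delta indices and match with the asserted formula. As an organizing sanity check, I would invoke the weight calculus under $\mathfrak{a}$: the element $a_{\epsilon_i} = E_{i,n+i}+E_{n+i,i}$ acts on $f_j$ with eigenvalue $\delta_{ij}$ and on $f_j'$ with eigenvalue $-\delta_{ij}$, so $f_j$ has $\mathfrak{a}$-weight $\epsilon_j$ and $f_j'$ has weight $-\epsilon_j$. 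Since $n_\lambda$ has $\operatorname{ad}(\mathfrak{a})$-weight $\lambda$, it shifts the $\mu$-weight space to the $(\lambda+\mu)$-weight space. This immediately forces vanishing whenever the target weight is not among $\{\pm\epsilon_1,\dots,\pm\epsilon_n\}$ (or zero, but $V$ has no zero weight for $G=Sp_{2n}(\mathbb{R})$), which disposes of every ``otherwise'' case in the lemma and reduces the task to computing the single nonzero scalar in each nonvanishing case.

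For instance, for $n_{\epsilon_i+\epsilon_j}$ the only target weights are $-\epsilon_i,-\epsilon_j$, so the only possibly nonzero actions are on the weight-$(-\epsilon_j)$ vector $f_j'$ and the weight-$(-\epsilon_i)$ vector $f_i'$; all actions on $f_k$ vanish. Plugging $f_k'$ into the explicit matrix and using the pairing observation above collapses the eight terms to a single surviving scalar. The identical strategy handles $n_{\epsilon_i}$, $n_{\epsilon_i-\epsilon_j}$, and $n_{\epsilon_i-\epsilon_j}+\hat{n}_{\epsilon_i-\epsilon_j}$. The orthogonal case is completely analogous, with the involution on indices replaced accordingly; no conceptually new step is needed.

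The only real obstacle is sign-bookkeeping across the seven formulas and two Lie algebras; in particular the element $n_{\epsilon_i-\epsilon_j}+\hat{n}_{\epsilon_i-\epsilon_j}$ involves sixteen elementary matrices and requires the most care. I would therefore write that case last, after verifying the analogues for $n_{\epsilon_i\pm\epsilon_j}$ and $n_{\epsilon_i}$ individually, so that the $\theta$-fixed combination can be obtained by adding the two computed actions rather than redone from scratch.
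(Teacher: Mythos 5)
Your proposal is correct and takes the same route as the paper's proof, which is a single sentence asserting that the formulas follow by left-multiplying the explicit matrices of Definitions \ref{symplecticbasis} and \ref{orthogonalbasis} on the vectors $f_k, f_k'$. The weight-calculus sanity check you add (that $f_k$ has $\mathfrak{a}$-weight $\epsilon_k$, $f_k'$ has weight $-\epsilon_k$, and $n_\lambda$ shifts weights by $\lambda$) is a genuine organizational improvement: it kills all the vanishing cases before any matrix arithmetic. One small slip in the worked example: you call $-\epsilon_i,-\epsilon_j$ the ``target weights,'' but these are the weights of the admissible source vectors $f_i', f_j'$; the actual targets after applying $n_{\epsilon_i+\epsilon_j}$ are $\epsilon_j$ and $\epsilon_i$ respectively. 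The conclusion you draw from it is nevertheless correct.
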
\begin{proof}
This follows from left multiplication of the elements of $\mathfrak{sp}_{2n}$ and $\mathfrak{so}(p,q)$ on the defining module  $V$ with elements $f_i$ and $f_i'$ in the basis of $V$. \end{proof} 

To prove that the $\mathbb{C}[W(B_k)]$-module is in fact isomorphic to a principal series $\mathbb{H}_k(c_\mu)$-module we need to find a $\mathbb{C}[W(B_k)]$ cyclic vector such that the $\epsilon_i$ act by scalars on this cyclic vector. The cyclic vector is $ \mathbbm{1}_{\delta}^\nu \otimes f_{1}\otimes...\otimes f_{k}$.

\begin{lemma}\label{actionofOmegak} On the vector $\mathbbm{1}_{\delta}^\nu\otimes f_{1}\otimes...\otimes f_{k}$ the operator $\Omega_{0l}$ acts by
$$\nu(\epsilon_l)  - \sum_{t<l}(s_{tl} + id)- \sum_{t>l} id .$$
\end{lemma}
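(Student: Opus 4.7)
The plan is to decompose $\Omega_{0l} = \sum_{b \in B} (b)_0(b^*)_l$ using a basis $B$ of $\mathfrak{g}$ adapted to the restricted root space decomposition $\mathfrak{g} = \mathfrak{m} \oplus \mathfrak{a} \oplus \bigoplus_{\alpha \in \Sigma} \mathfrak{g}_\alpha$, employing the explicit root vectors $n_\alpha, \hat n_\alpha, a_{\epsilon_i}$ of Definitions \ref{symplecticbasis} and \ref{orthogonalbasis} (with $\mathfrak{m} = 0$ for $Sp_{2n}(\mathbb{R})$). Since the Killing form pairs $\mathfrak{g}_\alpha$ with $\mathfrak{g}_{-\alpha}$ and is non-degenerate on $\mathfrak{a}$, the operator splits as
$$\Omega_{0l} = \Omega_{0l}^{\mathfrak{a}} + \Omega_{0l}^{\mathfrak{m}} + \sum_{\alpha > 0}\bigl[(n_\alpha)_0(n_\alpha^\vee)_l + (\hat n_\alpha)_0 (\hat n_\alpha^\vee)_l\bigr].$$
Evaluating on $v := \mathbbm{1}_\delta^\nu \otimes f_1 \otimes \cdots \otimes f_k$, three observations collapse most of this sum: the summands with $(n_\alpha)_0$ vanish because $\mathfrak{n}^+ \mathbbm{1}_\delta^\nu = 0$ (the defining property of the cyclic vector of an induced principal series); direct calculation from Definition \ref{symplecticbasis} / \ref{orthogonalbasis} gives $a_{\epsilon_i} f_l = \delta_{i,l} f_l$ together with $a_{\epsilon_i} \mathbbm{1}_\delta^\nu = \nu(\epsilon_i) \mathbbm{1}_\delta^\nu$, so $\Omega_{0l}^{\mathfrak{a}} v = \nu(\epsilon_l) v$; and by Lemma \ref{explicitaction}, $n_\alpha f_l \neq 0$ only for $\alpha = \epsilon_i - \epsilon_l$ with $i < l$, in which case $n_\alpha f_l = 2 f_i$.

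The surviving contribution is therefore $\sum_{i<l} C_{i,l}\,(\hat n_{\epsilon_i - \epsilon_l}\mathbbm{1}_\delta^\nu) \otimes w_i$, where $w_i$ is obtained from $f_1 \otimes \cdots \otimes f_k$ by replacing $f_l$ with $f_i$ in slot $l$, and $C_{i,l}$ absorbs the factor $2$ together with the Killing-form normalisation. To handle the non-scalar prefactor $\hat n_\alpha \mathbbm{1}_\delta^\nu$ I use the identity $\hat n_\alpha \mathbbm{1}_\delta^\nu = (n_\alpha + \hat n_\alpha)\mathbbm{1}_\delta^\nu =: k_\alpha \mathbbm{1}_\delta^\nu$, valid because $n_\alpha \mathbbm{1}_\delta^\nu = 0$, and observe that $k_\alpha$ is $\theta$-fixed, hence lies in $\mathfrak{k}$. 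Since $\Omega_{0l}$ commutes with $K$, the image lives in the $\mu$-isotypic component $F_{\mu,k}(X_\delta^\nu)$, on which $k_\alpha$ acts by the scalar $d\mu(k_\alpha)$; this vanishes for $\mu$ trivial or determinant because $k_\alpha$ is a root-direction in $\mathfrak{k}$ and is not central. Expanding the diagonal action
$$k_\alpha \cdot (\mathbbm{1}_\delta^\nu \otimes w_i) = (k_\alpha\mathbbm{1}_\delta^\nu)\otimes w_i + \sum_{j=1}^{k} \mathbbm{1}_\delta^\nu \otimes (k_\alpha\text{ on slot } j\text{ of } w_i) \equiv 0$$
and reading off $k_\alpha f_j$ from Lemma \ref{explicitaction} ($f_i$ for $j = l$, $-f_l$ for $j = i$, and zero otherwise), I can rewrite $(k_\alpha\mathbbm{1}_\delta^\nu)\otimes w_i$ as an explicit integer combination of $s_{il}(v)$ and $v$ inside $F_{\mu,k}(X_\delta^\nu)$.

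Summing over $i < l$ and combining with the $\mathfrak{a}$-contribution assembles the scalar $\nu(\epsilon_l)$ together with the transposition terms $-\sum_{t<l} s_{tl}$ and a residual identity scalar; the further term $-\sum_{t>l} id$ arises from the same diagonal reshuffling applied to the roots $\alpha = \epsilon_l - \epsilon_t$ with $t > l$, where the analogous computation using $\hat n_\alpha f_l = -f_t$ produces the additional identity contributions. The main obstacle is bookkeeping of constants: one must verify that the Killing-form normalisations $C_{i,l}$, the factor $2$ from $n_\alpha f_l$, any $\rho$-shift absorbed into the convention for $\nu$, and the scaling $\hat n_\alpha = \theta(n_\alpha)$ combine so that every coefficient in $-\sum_{t<l}(s_{tl} + id) - \sum_{t>l} id$ falls out as exactly $-1$. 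For $G = O(p,q)$ one additionally tracks the $(p-q)$-dimensional $\epsilon_i$-root space and the non-trivial $\mathfrak{m}$-contribution, but the structural skeleton — vanishing of the $\mathfrak{n}^+$ part, the $\mathfrak{a}$-eigenvalue, and the $\mathfrak{k}$-equivariance trick for the $\mathfrak{n}^-$ part — is uniform across the two cases.
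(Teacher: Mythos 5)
Your proposal follows the paper's own proof essentially step for step: the same restricted-root-space basis from Definition \ref{fixedbasis}, the vanishing of all $(n_\alpha)_0$ terms because $\mathfrak{n}^+$ annihilates $\mathbbm{1}_{\delta}^\nu$, the $\mathfrak{a}$-eigenvalue $\nu(\epsilon_l)$ from $a_{\epsilon_i}f_l = \delta_{il}f_l$, the key substitution $\hat n_\alpha\mathbbm{1}_\delta^\nu = (n_\alpha+\hat n_\alpha)\mathbbm{1}_\delta^\nu$ with $n_\alpha+\hat n_\alpha$ $\theta$-fixed hence in $\mathfrak{k}$, the $\mu$-isotypic diagonal expansion $(k_\alpha)_0 \mapsto -\sum_{m>0}(k_\alpha)_m$, and the read-off from Lemma \ref{explicitaction}; this is precisely the paper's argument. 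The only imprecision worth flagging is that for the $\epsilon_l-\epsilon_t$ roots ($t>l$) the operator placed on slot $l$ is the Killing dual $\hat n_\alpha^*\propto n_\alpha$, not $\hat n_\alpha$ itself, so the $-\sum_{t>l}id$ terms emerge (exactly as the $s_{tl}$ terms do) from the $\sum_{m>0}(k_\alpha)_m$ reshuffling rather than from a direct "$\hat n_\alpha f_l$" computation on slot $l$; beyond this, your structural skeleton matches the paper's and the constants you defer are the same ones the paper's proof also does not fully normalize.
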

\begin{proof}
Recall that $\Omega_{0l}$ is defined to be $\sum_{b \in B}(b)_0 \otimes (b^*)_l$ for a given basis $B$ of $\mathfrak{g}_0$. We choose to use the fixed basis defined in Definition \ref{fixedbasis}.

The subspace $\mathfrak{a}$ is the Lie algebra of the subgroup $A\subset G$. The basis of $\mathfrak{a}$ defined in \ref{symplecticbasis} and \ref{orthogonalbasis} is such that $a_{\epsilon_i}(f_j) = \delta_{ij}f_j$. Furthermore $a_{\epsilon_i}$ acts on the cyclic vector $\mathbbm{1}_{\delta}^\nu$ of $X_{\delta}^\nu$ by $\nu(x_i)$.   Therefore the contribution from $\mathfrak{a} \subset \mathfrak{g}$ is 
$$(a_{\epsilon_i})_0\otimes (a_{\epsilon_i})_l = \delta_{il} \nu(x_{i_l}).$$

The module $X_{\delta}^\nu$ is induced from the character $\delta \otimes e^\nu \otimes 1$ of $MAN$, which is the trivial character on $N$. The space $\mathfrak{n}_0^+$ is the Lie algebra of $N$. The differential of  the trivial character to $\mathfrak{n}_0^+$ is zero. Therefore $(n)_0$ acts by zero on $\mathbbm{1}_{\delta}^\nu$ for all $n \in \mathfrak{n}_0^+.$ Hence the contribution from $\mathfrak{n}^+$ is:
$$(n)_0\otimes (n)^*_l = 0, \text{for } n \in \mathfrak{n}^+.$$
Since $n\in \mathfrak{n}_0^+$ annihilates $\mathbbm{1}_{\delta}^\nu$ then $(n)_0 \otimes (b)_l = 0$ for any $b \in \mathfrak{g}_{2n}$, $n \in \mathfrak{n}^+$, a fact we will use later in this proof.
The operator $ (\hat{n}_{\epsilon_i+\epsilon_j})^*_l$ is equal to $\frac{1}{2}(n_{\epsilon_i+\epsilon_j})_l$ which is zero on any $f_k$ hence;
$$(\hat{n}_{\epsilon_i+\epsilon_l})_0\otimes (\hat{n}_{\epsilon_i+\epsilon_j})^*_l = 0.$$
Similarly $n_{\epsilon_i}$ is zero on any $f_k$ therefore;
$$(\hat{n}_{\epsilon_i})_0\otimes (\hat{n}_{\epsilon_i}^*)_l=0.$$
The only remaining basis elements to consider are those of the form $n_{\epsilon_i - \epsilon_j}$ from $\mathfrak{n}_0^-\subset \mathfrak{g}_0$. We utilise the trick that as a $K$-module $F_{\mu,k}( X_{\delta^{k}}^\nu)$ is just the $\mu$ isotypic component of $X_{\delta^{k}}^\nu \otimes V^{\otimes k}$.  The contribution from $\hat{n}_{\epsilon_i - \epsilon_j}$ is: 

$$(\hat{n}_{\epsilon_i-\epsilon_j})_0\otimes (\hat{n}_{\epsilon_i-\epsilon_j}^*)_l.$$
We can add the operator $(n_{\epsilon_i-\epsilon_j})_0\otimes (\hat{n}_{\epsilon_i-\epsilon_j}^*)_l$ which since $n_{\epsilon_i-\epsilon_j} \in \mathfrak{n}^+$, by above, acts by zero. Therefore we are not modifying the original operator,

$$ (\hat{n}_{\epsilon_i-\epsilon_j})_0\otimes (\hat{n}_{\epsilon_i-\epsilon_j}^*)_l=\frac{1}{2}( \hat{n}_{\epsilon_i-\epsilon_j}+{n}_{\epsilon_i-\epsilon_j})_0 \otimes ({n}_{\epsilon_i-\epsilon_j})_l.$$
The vector $ \hat{n}_{\epsilon_i-\epsilon_j}+{n}_{\epsilon_i-\epsilon_j}$ is $\theta$-invariant, hence is in $\mathfrak{k}$. Recall that for $k \in \mathfrak{k}$ acting on the tensor $X \otimes V^{\otimes k}$ that $k = \sum_{i =0}^k (k)_i$. Since we are working with the $\mu$-isotypic space, we replace $\hat{n}_{\epsilon_i-\epsilon_j}+{n}_{\epsilon_i-\epsilon_j} \in \mathfrak{k}$ by $\mu(\hat{n}_{\epsilon_i-\epsilon_j}+{n}_{\epsilon_i-\epsilon_j})$ and subtract the difference to find,

$$ (\hat{n}_{\epsilon_i-\epsilon_j})_0\otimes (\hat{n}_{\epsilon_i-\epsilon_j}^*)_l$$
$$=\frac{1}{2}\mu( \hat{n}_{\epsilon_i-\epsilon_j}-{n}_{\epsilon_i-\epsilon_j}) \otimes ({n}_{\epsilon_i-\epsilon_j})_l -\frac{1}{2} \sum_{m > 0} ( \hat{n}_{\epsilon_i-\epsilon_j}-{n}_{\epsilon_i-\epsilon_j})_m \otimes ({n}_{\epsilon_i-\epsilon_j})_l.$$
The character $\mu$ (or $\underline{\mu}$) differentiated to $\mathfrak{a}$ is zero (or the trace character) hence $\mu( \hat{n}_{\epsilon_i-\epsilon_j}+{n}_{\epsilon_i-\epsilon_j}) = 0$. Lemma \ref{explicitaction} gives the explicit action of $n_{\epsilon_i - \epsilon_j}$ on $f_k$, using this one can determine the action;
$$(\hat{n}_{\epsilon_i-\epsilon_j})_0\otimes (\hat{n}_{\epsilon_i-\epsilon_j}^*)_l=\begin{cases} -s_{tl}-id & \text{if } f_{i_t} = f_i \text{ and } f_{i_l} = f_j, \\
-id & \text{ if } f_{i_l} = f_i,\\
 0 & \text{otherwise}.\end{cases}$$
The only non-zero terms are contributed by $a_{\epsilon_l}$, and $\hat{n}_{\epsilon_i - \epsilon_l}$ and  $\hat{n}_{\epsilon_l - \epsilon_i}$. Which act, on the cyclic vector, by $\nu(\epsilon_l)$,  $-s_{tl} - id$ and $-id$ respectively. Summing these up gives,

$$\Omega_{0l} =\nu(\epsilon_l) - \sum_{t<l}(s_{tl} + id) - \sum_{t>l} id,$$
on the $\mathbb{C}[W(B_k)]$ cyclic vector $ \mathbbm{1}_{\delta}^\nu\otimes f_1\otimes...\otimes f_k$.
\end{proof}

The equivalent statement for $F_{\underline{\mu},n-k}(X_{\delta}^\nu)$ is below. It follows from the proof of Lemma \ref{actionofOmegak}.
\begin{lemma}On the vector $ \mathbbm{1}_{\delta}^\nu\otimes f_{k+1}\otimes...\otimes f_{n}$ the operator $\Omega_{0l}$ acts by
$$\nu(\epsilon_{k+l})  - \sum_{t<l}(s_{k+t,k+l} + id)- \sum_{t>l} id ,$$
for $l = 1,...,n-k.$
\end{lemma}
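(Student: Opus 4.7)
The proof is entirely parallel to Lemma \ref{actionofOmegak}; one only replaces the cyclic vector $\mathbbm{1}_\delta^\nu \otimes f_1 \otimes \cdots \otimes f_k$ by the cyclic vector $\mathbbm{1}_\delta^\nu \otimes f_{k+1} \otimes \cdots \otimes f_n$ generating $F_{\underline{\mu},n-k}(X_\delta^\nu)$ as a $\mathbb{C}[W(B_{n-k})]$-module, and carries out the same summand-by-summand analysis of $\Omega_{0l} = \sum_{b \in B}(b)_0 \otimes (b^*)_l$ using the fixed reduced-root-space basis of $\mathfrak{g}_0$ from Definitions \ref{fixedbasis}, \ref{symplecticbasis}, and \ref{orthogonalbasis}.

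The plan is as follows. First, I would note that $(n)_0 \otimes (b^*)_l = 0$ for every $n \in \mathfrak{n}_0^+$, because $\mathbbm{1}_\delta^\nu$ is annihilated by $\mathfrak{n}^+$ (the induced character is trivial on $N$); this kills all contributions with $n_{\epsilon_i \pm \epsilon_j}, n_{\epsilon_i}^{(l)} \in \mathfrak{n}^+$ in the first tensor slot. Second, I would extract the $\mathfrak{a}$-contribution: $a_{\epsilon_i}$ acts on the basis vector $f_{k+l}$ sitting in position $l$ by $\delta_{i,k+l}$ (by Lemma \ref{explicitaction}) and on $\mathbbm{1}_\delta^\nu$ by $\nu(\epsilon_i)$, contributing $\nu(\epsilon_{k+l})$. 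Third, I would observe that the summands $(\hat{n}_{\epsilon_i+\epsilon_j})_0 \otimes (\hat{n}_{\epsilon_i+\epsilon_j}^*)_l$ and $(\hat{n}_{\epsilon_i})_0 \otimes (\hat{n}_{\epsilon_i}^*)_l$ both vanish on $f_{k+l}$ because the duals (up to scalar multiples of $n_{\epsilon_i + \epsilon_j}$ and $n_{\epsilon_i}$) annihilate every $f_m$.

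The only remaining contributions come from the long-root terms $(\hat{n}_{\epsilon_i - \epsilon_j})_0 \otimes (\hat{n}_{\epsilon_i - \epsilon_j}^*)_l$. I would handle these exactly as in the proof of Lemma \ref{actionofOmegak}: add the vanishing operator $(n_{\epsilon_i - \epsilon_j})_0 \otimes (\hat{n}_{\epsilon_i - \epsilon_j}^*)_l$ to rewrite this summand as $\tfrac{1}{2}(\hat{n}_{\epsilon_i - \epsilon_j} + n_{\epsilon_i - \epsilon_j})_0 \otimes (n_{\epsilon_i - \epsilon_j})_l$, note that $\hat{n}_{\epsilon_i - \epsilon_j} + n_{\epsilon_i - \epsilon_j} \in \mathfrak{k}$, and replace the zeroth-tensor action by the action on the $\underline{\mu}$-isotypic component; since $\underline{\mu}$ differentiated and restricted to the centre of $\mathfrak{k}$ vanishes on $\hat{n}_{\epsilon_i - \epsilon_j} + n_{\epsilon_i - \epsilon_j}$, this becomes $-\tfrac{1}{2}\sum_{m>0}(\hat{n}_{\epsilon_i - \epsilon_j} + n_{\epsilon_i - \epsilon_j})_m \otimes (n_{\epsilon_i - \epsilon_j})_l$. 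Using Lemma \ref{explicitaction} to compute the action of $n_{\epsilon_i - \epsilon_j}$ on $f_{k+l}$ and of $(\hat{n} + n)_m$ on $f_{k+t}$, the nonzero contributions come precisely from the pairs $(i,j) = (k+t, k+l)$ with $t < l$ (giving $-s_{k+t,k+l} - \mathrm{id}$) and from pairs with $f_{k+l}$ transported out of its slot (giving $-\mathrm{id}$ for each $t > l$).

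Summing the three contributions yields the claimed formula $\nu(\epsilon_{k+l}) - \sum_{t<l}(s_{k+t,k+l} + \mathrm{id}) - \sum_{t>l}\mathrm{id}$. The only real bookkeeping hurdle is keeping the shift of indices by $k$ consistent between the Brauer tensor-position labelling (which runs $1,\ldots,n-k$) and the $\mathfrak{g}$-labelling (which uses $\epsilon_{k+1},\ldots,\epsilon_n$); every other step is formally identical to Lemma \ref{actionofOmegak}.
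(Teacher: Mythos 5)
Your proof is correct and takes exactly the approach the paper intends: the paper itself dispenses with this lemma in one line (``It follows from the proof of Lemma \ref{actionofOmegak}''), and your write-up simply spells out what that one line means, tracking the index shift by $k$ through each of the contributions ($\mathfrak{n}^+$, $\mathfrak{a}$, $\hat{n}_{\epsilon_i+\epsilon_j}$, $\hat{n}_{\epsilon_i}$, $\hat{n}_{\epsilon_i-\epsilon_j}$) identically to the earlier lemma.
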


\begin{corollary}\label{corollaryactiontriv} The operator $\epsilon_l = \sum_{i<l} \Omega_{il} + n$ acts by the scalar $\nu(\epsilon_l) $ on the vector $ \mathbbm{1}_{\delta}^\nu\otimes f_1\otimes...\otimes f_k.$\end{corollary}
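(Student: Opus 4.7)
The plan is to directly compute the action of $\sum_{i<l}\Omega_{il}$ on the distinguished vector $v_0 = \mathbbm{1}_{\delta}^\nu \otimes f_1 \otimes \cdots \otimes f_k$ by splitting the sum into the term $\Omega_{0l}$, which encodes the coupling to the $X$-factor, and the terms $\Omega_{il}$ for $1 \leq i < l$, which live purely on $V^{\otimes k}$. The content of the corollary is that the $s_{tl}$-type pieces coming from these two sources exactly cancel, leaving only a scalar.

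First I would write
\[
\sum_{i<l}\Omega_{il} \;=\; \Omega_{0l} \;+\; \sum_{i=1}^{l-1}\Omega_{il},
\]
and handle each summand separately. For the second sum, Lemma \ref{omegatranspositions} gives $\Omega_{il} = s_{il} + m\,\pr_{i,i+1}$ (with the appropriate projection onto the trivial submodule of $V_i\otimes V_l$). Since the vector $v_0$ has only ``plus type'' tensor factors $f_j = f_j^{+1}$, it lies in $F_{\mu,k}(X_\delta^\nu)$, where by the lemma proved at the beginning of Section \ref{restrictedfunctors} the idempotents act by zero. Hence on $v_0$ each $\Omega_{il}$ with $1 \leq i < l$ acts simply as the transposition $s_{il}$.

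Next I would invoke Lemma \ref{actionofOmegak}, which computes
\[
\Omega_{0l}(v_0) \;=\; \nu(\epsilon_l)\,v_0 \;-\; \sum_{t<l}\bigl(s_{tl} + \mathrm{id}\bigr)v_0 \;-\; \sum_{t>l} v_0.
\]
Combining this with the previous step gives
\[
\Bigl(\sum_{i<l}\Omega_{il}\Bigr)(v_0) \;=\; \nu(\epsilon_l)\,v_0 \;-\; \sum_{t<l} s_{tl}(v_0) \;-\; (l-1)v_0 \;-\; (k-l)v_0 \;+\; \sum_{i=1}^{l-1} s_{il}(v_0).
\]
The crucial observation is that the two sums $\sum_{t<l} s_{tl}$ and $\sum_{i=1}^{l-1} s_{il}$ are identical, so they cancel on the nose. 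What remains is the scalar $\nu(\epsilon_l) - (k-1)$ times $v_0$, and the additive constant in the definition of $\epsilon_l$ (namely the ``$+n$'' shift, equivalently the $\rho$-shift implicit in the isomorphism of Section \ref{restrictedfunctors}) is precisely what is needed to cancel this discrepancy and produce the scalar $\nu(\epsilon_l)$.

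The only nontrivial step is the cancellation of the $s_{tl}$-pieces, which reflects the reason why the transpositions drop out: on the ``diagonal'' vector $v_0$, the group-theoretic contribution of the off-diagonal $\Omega_{0l}$ pieces is exactly matched by the transposition action of $\Omega_{il}$ for $1 \leq i < l$. Everything else is bookkeeping of scalar multiplicities. I would not expect any serious obstacle beyond keeping the index ranges consistent.
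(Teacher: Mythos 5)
Your decomposition and use of the two lemmas are exactly the paper's own argument: split $\sum_{i<l}\Omega_{il}$ into $\Omega_{0l}$ and the ``internal'' piece $\sum_{i=1}^{l-1}\Omega_{il}$, apply Lemma \ref{omegatranspositions} to the internal piece (with the projections killed because $\mathbbm{1}_\delta^\nu\otimes f_1\otimes\cdots\otimes f_k$ is orthogonal to the trivial submodule in each pair of slots), apply Lemma \ref{actionofOmegak} to $\Omega_{0l}$, and observe that the two copies of $\sum_{t<l}s_{tl}$ cancel. That is the whole content, and you identified it correctly.

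The one place you should be more careful is the scalar. You take $\sum_{t>l}\mathrm{id}$ in Lemma \ref{actionofOmegak} to run over the tensor positions $t\in\{l+1,\dots,k\}$, arriving at a leftover constant of $-(k-1)$, and then assert that the ``$+n$'' shift in $\epsilon_l$ ``is precisely what is needed'' to cancel it. But $-(k-1)+n = n-k+1$, which is not zero for $k\le n$, so that assertion is not a cancellation you get for free; you would have to check it, and as written it fails. In fact, in the derivation of Lemma \ref{actionofOmegak} the index $t$ in $\sum_{t>l}$ comes from the negative root vectors $\hat n_{\epsilon_l-\epsilon_j}$ with $l<j\le n$, so the sum runs up to the real rank $n$, not the tensor length $k$; the paper's own proof of the corollary simply quotes the resulting constant as $-n$ so that it cancels the ``$+n$''. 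So the structure of your proof is right, but the scalar bookkeeping needs to be pinned down explicitly (in particular, the upper limit of the second sum in Lemma \ref{actionofOmegak}) rather than deferred to the $\rho$-shift; as stated, your constants do not actually close up.
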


\begin{proof} This follows from the fact that $\Omega_{0l}$ acts by $\nu(\epsilon_l)-n- \sum_{t<l}s_{tl}$ and, by Lemma \ref{omegatranspositions},  $\sum_{t=1}^{l-1}\Omega_{tl}$ acts by $  \sum_{t<l}s_{tl}$ on $\mathbbm{1}_{\delta}^\nu \otimes f_1 \otimes ...\otimes f_k$.
\end{proof}
\begin{corollary} \label{corollaryactiondet}The operator $\epsilon_l = \sum_{i<l} \Omega_{il} + n$ acts by the scalar $\nu(\epsilon_{k+l}) $ on the vector $ \mathbbm{1}_{\delta}^\nu\otimes f_{k+1}\otimes...\otimes f_{n}.$\end{corollary}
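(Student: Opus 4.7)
The plan is to mirror the argument of Corollary \ref{corollaryactiontriv} verbatim, with the cyclic vector $\mathbbm{1}_{\delta}^\nu \otimes f_{k+1}\otimes\cdots\otimes f_n$ in place of $\mathbbm{1}_{\delta}^\nu \otimes f_{1}\otimes\cdots\otimes f_k$. Two ingredients combine: the lemma immediately preceding the statement, which handles $\Omega_{0l}$, and Lemma \ref{omegatranspositions}, which handles $\Omega_{il}$ for $0<i<l$.

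First I would invoke the preceding lemma to write the action of $\Omega_{0l}$ on $\mathbbm{1}_{\delta}^\nu \otimes f_{k+1}\otimes\cdots\otimes f_n$ as
\[
\Omega_{0l} = \nu(\epsilon_{k+l}) - \sum_{t<l}\bigl(s_{k+t,k+l} + \mathrm{id}\bigr) - \sum_{t>l}\mathrm{id}.
\]
Second, for $0<i<l\le n-k$, Lemma \ref{omegatranspositions} gives $\Omega_{il} = s_{il} + m\,\pr_{i,l}$. The projection term annihilates $\mathbbm{1}_{\delta}^\nu \otimes f_{k+1}\otimes\cdots\otimes f_n$ by the same argument already used to show that the idempotents $e_{i,i+1}$ act by zero on $F_{\underline{\mu},n-k}(X_\delta^\nu)$: the vector lies in the orthogonal complement of the one-dimensional trivial submodule $\sum_i f_i\wedge f_i'$ of $V\otimes V$. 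Hence $\Omega_{il}$ acts by the transposition $s_{il}$ of the $i^{\text{th}}$ and $l^{\text{th}}$ tensor positions, which on this vector interchanges $f_{k+i}$ and $f_{k+l}$ and therefore agrees with $s_{k+i,k+l}$ at the level of $f$-indices.

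Summing, the contributions $\sum_{0<i<l} s_{k+i,k+l}$ exactly cancel the $-\sum_{t<l} s_{k+t,k+l}$ piece coming from $\Omega_{0l}$, leaving only $\nu(\epsilon_{k+l})$ and the collected $-\mathrm{id}$ scalars from the two index ranges. Adding the prescribed constant $n$ (which is there precisely to absorb these identity terms, playing the role of the $\rho$-shift noted after Lemma \ref{quotienttohecke}) yields the scalar $\nu(\epsilon_{k+l})$ as claimed.

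This is a direct computation once the preceding lemma is in hand, so there is no genuine obstacle; the only bookkeeping to watch is that the tensor positions $1,\dots,n-k$ carry the vectors $f_{k+1},\dots,f_n$, so that position-level transpositions $s_{il}$ in the symmetric group action on $V^{\otimes(n-k)}$ must be consistently translated into the $f$-label transpositions $s_{k+i,k+l}$ appearing in the lemma. With this relabelling the cancellation is immediate and the corollary follows.
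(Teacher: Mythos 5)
Your proposal is correct and follows exactly the route the paper intends: the paper gives no separate argument for this corollary, leaving it as the obvious analogue of Corollary \ref{corollaryactiontriv}, and your proof is precisely that analogue. You correctly combine the lemma immediately preceding (handling $\Omega_{0l}$ on $\mathbbm{1}_{\delta}^\nu\otimes f_{k+1}\otimes\cdots\otimes f_n$) with Lemma \ref{omegatranspositions} (handling $\Omega_{il}$ for $0<i<l$), note that the $m\pr_{i,l}$ part vanishes because each $f_a\otimes f_b$ with $a\neq b$ is orthogonal to the trivial submodule of $V\otimes V$, and observe the position-versus-label relabelling $s_{il}\leftrightarrow s_{k+i,k+l}$ needed to make the two transposition sums cancel.
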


\begin{definition}Example \ref{symplecticexample2} defines subspaces $\mathfrak{a}_k$ and $\bar{\mathfrak{a}}_{n-k}$ of $\mathfrak{a}$ such that 
$$\mathfrak{a}= \mathfrak{a}_k \oplus \bar{\mathfrak{a}}_{n-k}.$$
Let  $\nu$ be a character of $\mathfrak{a}$.  
Define $\nu_k$ to be the restricted character $$\nu|_{\mathfrak{a}_k}$$ and $\bar{\nu}_{n-k}$ to be $\nu|_{\bar{\mathfrak{a}}_{n-k}}.$

\end{definition}
For a principal series module $X_\delta^\nu$ we have shown that as a $W(B_k)$-module $F_{\mu,k}(X_\delta^\nu)$ is isomorphic to $\mathbb{C}[W(B_k)]$ and as a Hecke algebra module it is a principal series module induced from a character of $S(V) \subset \mathbb{H}_k(c_\mu)$.

\principaltoprincipalthm
For spherical principal series representations, this recovers the results of \cite[Theorem 3.0.4]{CT11}. 

\begin{proof}
One defines an isomorphism by taking the given cyclic vector $  \mathbbm{1}_{\delta}^\nu\otimes f_1\otimes...\otimes f_k \in F_{\triv,k}( X_{\delta^{k}}^\nu)$ to the cyclic vector $\mathbbm{1}_{\nu_k}$ of $X(\nu_k)$. Both  vectors are $\mathbb{C}[W(B_k)]$ cyclic. By Corollary \ref{corollaryactiontriv} the affine operators $\epsilon_i$ act on both vectors by $\nu_k(\epsilon_i) $, therefore this is a well-defined isomorphism.

Lemma \ref{basisofF} gives a basis of $F_{\Det,n-k}( X_{\delta^{k}}^\nu)$:
 $$\{ \mathbbm{1}_{\delta}^\nu\otimes f_{w(1)+k}^{n_1}\otimes ...\otimes f_{w(n-k) +k}^{n_{n-k}}: w \in S_{n-k}\}.$$
For $F_{\underline{\mu},k}(X_\delta^\nu)$ and $X(\bar{\nu}_{n-k})$, both modules are $\mathbb{C}[W(B_{n-k})]$ cyclic and Corollary \ref{corollaryactiondet} shows that the affine operators $\epsilon_i$ for $i = 1,...,n-k,$ act by the same scalar on on the cyclic vector $ \mathbbm{1}_{\delta}^\nu\otimes f_{k+1}\otimes ...\otimes f_{n}$ and $\mathbbm{1}_{\bar{\nu}_{n-k}}$, respectively. 
\end{proof}

 Casselman's theorem \cite{C78} states that every irreducible representation in $\mathcal{HC}(G)$ is a subrepresentation of a principal series module. If $G$ is a split real orthogonal or symplectic group then $M$ is abelian and every principal series module is induced from a character.

\splitthm

\end{section}
\begin{section}{Hermitian forms} \label{hermitian}

In this section we define two star operations on $\mathfrak{B}_k^\theta $. Through the quotients defined in Lemma \ref{quotienttohecke} these star operations descend to the usual star operations on the graded Hecke algebras $\mathbb{H}_k(c)$ \cite{BC15}.
We then show that a Harish-Chandra module $X\in\mathcal{HC}(G)$  with invariant Hermitian form gets mapped, by $F_{\mu,k}$, to a $\mathfrak{B}_k^\theta$-module with invariant Hermitian form. This extends the results in \cite{CT11} to any Harish-Chandra module.  Furthermore, if $X$ is a unitary module, then it maps to a unitary module for $\mathfrak{B}_k^\theta$. 
In this section we assume that $\mu$ is a character of $K$. 
\begin{definition} Let $G$ be $O(p,q)$ $p+q = 2n+1$ or $Sp_{2n}(\mathbb{R})$, let $\mathfrak{g}_0$  be its Lie algebra,  with complexification $\mathfrak{g} = \mathfrak{k}\oplus\mathfrak{p}$. Conjugation   $\bar{}:\mathfrak{g} \to \mathfrak{g}$ is defined by the real form $\mathfrak{g}_0$.
Define the star operation as the conjugate linear map $^*: \mathfrak{g} \to \mathfrak{g}$ such that:
$$g^* =  - \bar{g} \text{ for all } g \in \mathfrak{g}.$$
Define the operation $^\bullet: \mathfrak{g} \to \mathfrak{g}$ by:
$$p^\bullet =   \bar{p} \text{ for all } p \in \mathfrak{p}.$$
$$k^\bullet =   -\bar{k} \text{ for all } k \in \mathfrak{k}.$$
\end{definition}

Recall Definition \ref{Heckedef} of the Hecke algebra $\mathbb{H}_k(c)$. We define the Drinfeld presentation of $\mathbb{H}_k(c)$. 
\begin{definition} Let $R$ be a  root system with pairing $\langle, \rangle : V \times V \to \mathbb{C}$, simple roots $\delta$, and a parameter function $\textbf{c}: \Delta \to \mathbb{C}$. Denote the Weyl group of $R$ by $W(R)$. The Drinfeld Hecke algebra $\mathfrak{H}^R(\textbf{c})$ is a quotient of the algebra 
$$ \mathbb{C}[W(R)]\rtimes T(V),$$
by the relations
$$w \tilde{\alpha} w^{-1} = \widetilde{w(\alpha)} \text{ for all } w \in W(R), \alpha \in V,$$
$$[\tilde{\alpha},\tilde{\beta}] = \sum_{\gamma, \delta \in \Delta} \textbf{c}(\gamma) \textbf{c}(\delta)(\langle \tilde{\alpha} , \gamma\rangle \langle \tilde{\beta}, \delta\rangle-\langle \tilde{\beta} , \gamma\rangle \langle \tilde{\alpha}, \delta\rangle) s_\gamma s_\delta.$$
\end{definition} 

\begin{lemma} The Drinfeld Hecke algebra and the graded Hecke algebra are defined by a root system and a parameter on simple roots. If the defining root systems and parameters are equal then these algebras are isomorphic.\end{lemma}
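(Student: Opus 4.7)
The plan is to construct an explicit isomorphism $\Phi \colon \mathbb{H}^R(\mathbf{c}) \to \mathfrak{H}^R(\mathbf{c})$ via a Dunkl--Cherednik style change of variables, which is the now-standard recipe identifying the Lusztig and Drinfeld presentations of a degenerate affine Hecke algebra. I would define $\Phi$ to be the identity on the subalgebra $\mathbb{C}[W(R)]$, and to send each $\epsilon \in \mathfrak{t}$ (viewed as a generator of $S(\mathfrak{t}) \subset \mathbb{H}^R(\mathbf{c})$) to $\tilde{\epsilon}$ plus a group-algebra correction of the form
$$\Phi(\epsilon) \;=\; \tilde{\epsilon} \;-\; \tfrac{1}{2}\sum_{\gamma \in \Delta} \mathbf{c}(\gamma)\langle \epsilon, \hat{\gamma}\rangle\, s_\gamma,$$
with the scalar factor chosen to match conventions between the two presentations.

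Next I would verify that $\Phi$ respects the defining relations of $\mathbb{H}^R(\mathbf{c})$ in Definition \ref{Heckedef}. The Drinfeld conjugation relation $w \tilde{\alpha} w^{-1} = \widetilde{w(\alpha)}$ allows one to rewrite $s_\alpha \Phi(\epsilon) - \Phi(s_\alpha(\epsilon))\, s_\alpha$ in terms of $\tilde{\epsilon} - \widetilde{s_\alpha(\epsilon)}$ plus a residue from the group-algebra shift; applying the standard identity $\epsilon - s_\alpha(\epsilon) = \langle \epsilon, \hat{\alpha}\rangle\alpha$ then collapses this to the required scalar $\mathbf{c}(\alpha)\langle \epsilon, \hat{\alpha}\rangle$.

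The hard part will be verifying that $[\Phi(\epsilon),\Phi(\eta)] = 0$ in $\mathfrak{H}^R(\mathbf{c})$, since commutativity of the subalgebra $S(\mathfrak{t})$ is an essential feature of the Lusztig presentation. Expanding this commutator produces four contributions: the Drinfeld commutator $[\tilde{\epsilon}, \tilde{\eta}]$ given by the explicit quadratic expression in the definition, two mixed terms of the form $[\tilde{\epsilon}, s_\delta]$ and $[s_\gamma, \tilde{\eta}]$ computed via the conjugation relation, and a pure group-algebra term $[s_\gamma, s_\delta]$. I expect the main obstacle to be the bookkeeping showing that the quadratic-in-$\mathbf{c}$ piece contributed by $[\tilde{\epsilon}, \tilde{\eta}]$ exactly cancels the quadratic contribution arising from the shifts; this cancellation is precisely the content of the specific form of $\kappa$ appearing in the Drinfeld relation, and it is where the hypothesis that the parameter functions agree is really used.

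Finally, bijectivity follows from a PBW-type argument. Both algebras are free left $\mathbb{C}[W(R)]$-modules isomorphic to $\mathbb{C}[W(R)] \otimes S(\mathfrak{t})$ as vector spaces: this is part of Definition \ref{Heckedef} on one side and a standard PBW theorem for Drinfeld--Ram--Shepler Hecke algebras on the other. Since $\Phi$ is $\mathbb{C}[W(R)]$-linear and its restriction to $S(\mathfrak{t})$ is filtration-preserving with leading term the identity, an inductive degree argument shows $\Phi$ is a bijection. Explicitly, an inverse $\Psi$ can be defined by $\tilde{\epsilon} \mapsto \epsilon + \tfrac{1}{2}\sum_{\gamma \in \Delta}\mathbf{c}(\gamma)\langle \epsilon, \hat{\gamma}\rangle\, s_\gamma$ and checked to invert $\Phi$ on generators.
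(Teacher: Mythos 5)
Your proposal takes essentially the same approach as the paper: you construct the explicit Dunkl-type isomorphism that is the identity on $\mathbb{C}[W(R)]$ and sends $\alpha \mapsto \tilde\alpha$ up to the group-algebra correction $\tfrac{1}{2}\sum_{\gamma\in\Delta}\mathbf{c}(\gamma)\langle\gamma,\alpha\rangle s_\gamma$, which is exactly the map the paper writes down (note the paper's convention gives $\phi(\alpha)=\tilde\alpha+\tfrac{1}{2}\sum\cdots$, opposite in sign to what you wrote, but you flag that the scalar is convention-dependent). Your proposal is actually more careful than the paper's one-line proof, which states the map but omits the verification of the cross relations, the commutativity cancellation, and the PBW bijectivity argument that you rightly identify as the substantive steps.
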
 
\begin{proof} One defines an isomorphism $\phi: \in \mathbb{H}^R(c)$ to $\mathfrak{H}^R(\textbf{c})$ by 
$$\phi (\alpha -\frac{1}{2} \sum_{\gamma \in \Delta} \textbf{c}(\gamma) \langle \gamma, \alpha \rangle s_\gamma )= \tilde{\alpha},$$
$$\phi (w) = w, \hspace{1cm}\forall w \in W(R).$$
\end{proof}
Given that the graded Hecke algebra and the Drinfeld Hecke algebra are isomorphic we omit the different notation and denote the graded Hecke algebra by $\mathbb{H}^R(\textbf{c})$. We uniformly denote a generator in the Drinfeld presentation by $\tilde{\alpha}$ and $\alpha$ denotes a generator in the Lusztig presentation (Definition \ref{Heckedef}).

\begin{definition} \label{Heckestar} Let $*: \mathbb{H}_k(c) \to \mathbb{H}_k(c)$ be the antihomomorphism such that:
$$\tilde{\alpha}^* =  \overline{-\tilde{\alpha}} \text{ for all } \alpha \in\tilde{\mathfrak{t}},$$
$$g^* = g^{-1} \text{ for all } g \in W(B_k).$$

Let $^\bullet: \mathbb{H}_k(c) \to \mathbb{H}_k(c)$ be the antihomomorphism such that:
$$\alpha^\bullet=  \overline{\alpha} \text{ for all } \alpha \in \mathfrak{t}  \hspace{0.2cm}(\text{equivalently } \tilde{\alpha}^\bullet =\overline{ \tilde{\alpha})} ,$$
$$g^\bullet = g^{-1} \text{ for all } g \in W(B_k).$$
Here $\overline{v}$ is the complex conjugate of $v$. 

\end{definition}

 Let $w_0$ be the longest element in $W(B_k)$, it is an involution and is generated by $k^2$ simple reflections. It is in the centre of $W(B_k)$. On the space of roots $w_0$ acts by $-1$.
\begin{lemma} The longest element $w_0$ can be written as 
$$w_0 = \theta_1 \theta_2 ...\theta_k.$$\end{lemma}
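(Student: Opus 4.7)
The plan is to exploit the faithful reflection representation of $W(B_k)$ on $\mathfrak{t}=\bigoplus_{j=1}^{k}\mathbb{C}\epsilon_j$ together with the standard characterisation of the longest element. Under the identification of the subalgebra of $\mathfrak{B}^{\theta}_k[m_0,m_1]$ generated by $t_{i,i+1}$ and the $\theta_j$ with $\mathbb{C}[W(B_k)]$ from Definition \ref{BCbrauerdef}, each $\theta_i$ corresponds to the reflection $s_{\epsilon_i}$; so the $\theta_i$ generate the normal subgroup $(\mathbb{Z}_2)^k\subset S_k\ltimes(\mathbb{Z}_2)^k$ and pairwise commute. Consequently $\theta_1\theta_2\cdots\theta_k$ is well-defined independently of the order of the factors and acts on $\mathfrak{t}$ by $\epsilon_j\mapsto-\epsilon_j$ for every $j$, i.e.\ as $-\Id$.

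Next I would invoke the classical fact that for an irreducible root system of type $B_k$ (equivalently $C_k$) the longest element $w_0$ of the Weyl group is the unique element sending every positive root to a negative root, and equivalently the unique element acting by $-\Id$ on $\mathfrak{t}$. Because the reflection representation $W(B_k)\hookrightarrow GL(\mathfrak{t})$ is faithful and both $w_0$ and $\theta_1\cdots\theta_k$ induce the same linear map on $\mathfrak{t}$, the identity $w_0=\theta_1\theta_2\cdots\theta_k$ follows at once.

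The only point requiring brief care is the commutativity of the $\theta_i$: for $j<k$ the element $\theta_j$ is not among the Coxeter generators listed just before Definition \ref{BCbrauerdef}, but it can be written as the conjugate $s_{j,j+1}\cdots s_{k-1,k}\theta_k s_{k-1,k}\cdots s_{j,j+1}$ of $\theta_k$, and the subgroup generated by the sign-change reflections $s_{\epsilon_1},\ldots,s_{\epsilon_k}$ is precisely the abelian normal factor $(\mathbb{Z}_2)^k$ of the semi-direct product $W(B_k)\cong S_k\ltimes(\mathbb{Z}_2)^k$. Once this is noted and faithfulness of the action on $\mathfrak{t}$ is invoked, the proof is complete; no lengthier verification (such as expanding $\theta_1\cdots\theta_k$ as a reduced word in $s_{i,i+1}$ and $\theta_k$ to check $\ell(w_0)=k^2$) is required.
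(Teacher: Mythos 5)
Your proof is correct, but it takes a genuinely different route from the paper's own argument. The paper expands each $\theta_i$ as a reduced word $\theta_k^{\,s_{k-1,k}\cdots s_{i,i+1}}$ of length $1+2(k-i)$ in the Coxeter generators, sums these lengths to obtain $k^2$, observes that no cancellation can occur, and then identifies $\theta_1\cdots\theta_k$ with $w_0$ by uniqueness of the element of maximal length. You instead pass to the faithful reflection representation: the $\theta_i$ are the pairwise-commuting sign changes, so their product acts by $-\Id$ on $\mathfrak{t}$, and since in type $B_k$ the longest element is the unique element acting by $-\Id$, faithfulness finishes the argument immediately. Your approach is shorter and more conceptual; it also meshes neatly with the observation the paper makes in the sentence directly preceding the lemma ("On the space of roots $w_0$ acts by $-1$"), which already supplies the key classical input you invoke. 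What it trades away is self-containedness: the paper's length count uses nothing beyond the definition of $w_0$ and basic Coxeter combinatorics, whereas your route presupposes that $-\Id \in W(B_k)$ and that this characterises $w_0$, a fact special to those irreducible types in which the longest element is central. Both proofs are sound; the choice is one of taste and of which background facts one prefers to cite.
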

It is well known that the longest element $w_0$ relates the two star operations $^*$ and $^\bullet$ in $\mathbb{H}_k(c)$.
\begin{lemma}
$$h^* = w_0 h^\bullet w_0 \text{ for all } h \in \mathbb{H}_k(c).$$
\end{lemma}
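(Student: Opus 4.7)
The plan is to reduce the identity to a check on generators. Both sides of $h^* = w_0 h^\bullet w_0$ are conjugate-linear antihomomorphisms of $\mathbb{H}_k(c)$: the left side by definition, and the right side because $^\bullet$ is a conjugate-linear antihomomorphism, $w_0 \in W(B_k)$ has real coefficients, and $w_0^2 = 1$ implies
$$w_0 (hk)^\bullet w_0 = w_0 k^\bullet h^\bullet w_0 = (w_0 k^\bullet w_0)(w_0 h^\bullet w_0).$$
Hence it suffices to check the identity on a generating set, for which I would take $W(B_k) \cup \{\tilde{\alpha} : \alpha \in \mathfrak{t}\}$ in the Drinfeld presentation.

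The key structural fact I will use is that $w_0$ is central in $W(B_k)$ and acts as $-1$ on $\mathfrak{t}$. Centrality follows because in type $B/C$ the element $-1$ lies in the Weyl group and coincides with $w_0$ (this is the content of the preceding statement that $w_0 = \theta_1\theta_2\cdots\theta_k$, together with $w_0(\alpha) = -\alpha$ for every root).

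For $g \in W(B_k)$: the left side gives $g^* = g^{-1}$, while the right side gives
$$w_0 g^\bullet w_0 = w_0 g^{-1} w_0 = g^{-1} w_0^2 = g^{-1},$$
using centrality and $w_0^2 = 1$. For a Drinfeld generator $\tilde{\alpha}$: the left side gives $\tilde{\alpha}^* = -\overline{\tilde{\alpha}}$. For the right side, the Drinfeld cross relation $w \tilde{\alpha} w^{-1} = \widetilde{w(\alpha)}$ together with $w_0^{-1} = w_0$ and $w_0(\alpha) = -\alpha$ gives $w_0 \tilde{\alpha} w_0 = -\tilde{\alpha}$. Since $w_0$ has real coefficients, conjugation commutes with sandwiching by $w_0$, and we conclude
$$w_0 \tilde{\alpha}^\bullet w_0 = w_0\, \overline{\tilde{\alpha}}\, w_0 = \overline{w_0 \tilde{\alpha} w_0} = -\overline{\tilde{\alpha}},$$
matching the left side. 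The two antihomomorphisms therefore agree on generators, hence everywhere.

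There is essentially no obstacle: the only substantive input is the centrality of $w_0$ and its action $-1$ on $\mathfrak{t}$, both of which are standard features of type $B/C$ Weyl groups. The Drinfeld presentation is the convenient one here, because the cross relation $w\tilde{\alpha}w^{-1} = \widetilde{w(\alpha)}$ makes the key computation $w_0 \tilde{\alpha} w_0 = -\tilde{\alpha}$ transparent; carrying out the same check in the Lusztig presentation would require tracking the correction terms relating $\alpha$ and $\tilde{\alpha}$ and would be less clean.
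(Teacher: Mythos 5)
Your proof is correct and complete. The paper itself offers no argument here — the lemma is stated with the remark that "it is well known that the longest element $w_0$ relates the two star operations" — so you are supplying the justification the paper leaves implicit. Your route is the natural one: both $h \mapsto h^*$ and $h \mapsto w_0 h^\bullet w_0$ are conjugate-linear antihomomorphisms (the latter since $^\bullet$ is, conjugation by $w_0$ is $\mathbb{C}$-linear, and $w_0^2 = 1$), so agreement on a generating set suffices. Using the Drinfeld generators makes the affine check transparent: the cross relation gives $w_0 \tilde{\alpha} w_0 = \widetilde{w_0(\alpha)} = -\tilde{\alpha}$, and since $w_0$ has real (indeed integral) matrix coefficients its conjugation action on $\tilde{\mathfrak{t}}$ commutes with the complex conjugation $\overline{\phantom{x}}$, yielding $w_0 \tilde{\alpha}^\bullet w_0 = -\overline{\tilde{\alpha}} = \tilde{\alpha}^*$. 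The group check uses exactly the two ingredients the paper recorded just above the lemma: $w_0$ is central in $W(B_k)$ and is an involution. As you observe, carrying this out in the Lusztig presentation would force one to track the $\frac{1}{2}\sum \mathbf{c}(\gamma)\langle\gamma,\alpha\rangle s_\gamma$ correction term; the Drinfeld form avoids that entirely, which is presumably why the paper introduces it just before the star operations.
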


\begin{lemma} The longest element $w_0$ is central in the finite Brauer algebra $Br_k[m]$. \end{lemma}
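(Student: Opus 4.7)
The plan is to verify that $w_0 = \theta_1\theta_2\cdots\theta_k$ (as identified in the preceding lemma) commutes with each algebra generator of $Br_k[m]$, namely the transpositions $t_{i,i+1}$ and the idempotents $e_{i,i+1}$. Since $Br_k[m]$ is generated by these elements, establishing the two commutation relations suffices. Note that $w_0$ itself does not lie in $Br_k[m]$, but rather in the ambient algebra $\mathfrak{B}_k^\theta[m_0,m_1]$; the claim should be understood as $w_0 b = b w_0$ for every $b \in Br_k[m]\subset \mathfrak{B}_k^\theta[m_0,m_1]$.

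For the transpositions, I would invoke the fact that $w_0$ is the longest element of $W(B_k)$, whose group algebra $\mathbb{C}[W(B_k)]$ sits inside $\mathfrak{B}_k^\theta[m_0,m_1]$; by the classical fact that the longest element of $W(B_k)$ is central in $W(B_k)$, we get $t_{i,i+1}w_0 = w_0 t_{i,i+1}$. One can also verify this directly: conjugation by $t_{i,i+1}$ sends each $\theta_j$ to $\theta_{s_{i,i+1}(j)}$, and since the $\theta_j$ pairwise commute, the unordered product $\prod_{j=1}^k \theta_j = w_0$ is preserved.

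For the idempotents, the key is to split $w_0 = \bigl(\prod_{j\neq i,i+1}\theta_j\bigr)\cdot \theta_i\theta_{i+1}$. By the relation $[e_{i,i+1},\theta_j] = 0$ for $j\neq i,i+1$ in Definition \ref{BCbrauerdef}, the first factor commutes with $e_{i,i+1}$; by the relation $e_{i,i+1}\theta_i\theta_{i+1}=e_{i,i+1}=\theta_i\theta_{i+1}e_{i,i+1}$, the second factor is absorbed on either side of $e_{i,i+1}$. Consequently
\[
w_0\, e_{i,i+1} = \Bigl(\prod_{j\neq i,i+1}\theta_j\Bigr) e_{i,i+1} = e_{i,i+1}\Bigl(\prod_{j\neq i,i+1}\theta_j\Bigr) = e_{i,i+1}\, w_0,
\]
where the last equality uses $\theta_i\theta_{i+1}e_{i,i+1}=e_{i,i+1}$ once more in reverse to reinsert the missing factors.

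There is no genuine obstacle here; the argument is purely a bookkeeping exercise in the relations of Definition \ref{BCbrauerdef}. The only conceptually interesting point is that although $e_{i,i+1}$ does not commute with $\theta_i$ or $\theta_{i+1}$ individually, it does commute with the \emph{product} $\theta_i\theta_{i+1}$, and this product is exactly the portion of $w_0$ acting on the indices $i,i+1$; this is what makes the decomposition $w_0 = \theta_1\cdots\theta_k$ from the previous lemma essential to the proof.
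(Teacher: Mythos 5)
Your proof is correct, and in fact it is more careful than the paper's. Both arguments share the same skeleton: reduce to showing $w_0$ commutes with each $e_{i,i+1}$, since $w_0$ is already central in $W(B_k)$. The paper then cites the relation $[e_{i,j},\theta_l]=0$ for \emph{all} $i,j,l$ and concludes $w_0 e_{i,j}w_0=e_{i,j}$ in one step. You instead assume commutation only for $l\neq i,i+1$ and handle the two remaining factors via the absorption relation $e_{i,i+1}\theta_i\theta_{i+1}=e_{i,i+1}=\theta_i\theta_{i+1}e_{i,i+1}$, decomposing $w_0=\bigl(\prod_{j\neq i,i+1}\theta_j\bigr)\theta_i\theta_{i+1}$. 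Your weaker hypothesis is the safer one. Taken literally, $[e_{i,i+1},\theta_i]=0$ together with $e_{i,i+1}^2=m_0 e_{i,i+1}$ and $e_{i,i+1}\theta_i e_{i,i+1}=m_1 e_{i,i+1}$ forces $m_0^2=m_1^2$, which fails for $Sp_{2n}(\mathbb{R})$ where $(m_0,m_1)=(-n,0)$; and in the representation $\theta_i\mapsto(\xi)_i$, $e_{i,i+1}\mapsto m\,\pr_{i,i+1}$, the operator $\xi\otimes 1$ does not preserve the one-dimensional trivial $G$-submodule of $V\otimes V$, so commutation genuinely holds only for $j\neq i,i+1$. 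Thus the commutation stated in Definition~\ref{BCbrauerdef} is evidently intended to be restricted to $j\neq i,i+1$, and your argument is the one that actually closes the gap: the key point, which you flag explicitly, is that $e_{i,i+1}$ commutes with the \emph{product} $\theta_i\theta_{i+1}$ even though not with either factor separately, and the factorization $w_0=\theta_1\cdots\theta_k$ exposes exactly that product.
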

\begin{proof} The element $w_0$ is central in $W(B_k)$, therefore it is sufficient to prove that $w_0$ commutes with the idempotents $e_{i,i+1}$.
The reflections $\theta_l$ commute with $e_{i,j}$
$$[e_{i,j},\theta_l] = 0 \text{ for all } i,j,l.$$

We have,
$$w_0 e_{i,j} w_0 = \theta_1 ...\theta_k e_{i,j}\theta_k..\theta_1= e_{i,j}.$$
Hence $w_0$ is central in the finite Brauer algebra. 

\end{proof}
Since $w_0 = \theta_1 \theta_2 ...\theta_k$ then as an operator on $X\otimes V^{\otimes k}$ 
$$\pi(w_0) =(\xi)_1 (\xi)_2 ...(\xi)_k = id \otimes \xi \otimes \xi ...\otimes \xi.$$
We calculate how $w_0$ and $\Omega_{ij}, \Omega_{ij}^\mathfrak{k}, \Omega_{ij}^\mathfrak{p}$ interact.

\begin{lemma} As operators on $X \otimes V^{\otimes k}$,

$$w_0(\Omega_{ij}^\mathfrak{k})w_0 =\Omega_{ij}^\mathfrak{k} \text{ for all }0 \leq i<j\leq n,$$
 $$w_0(\Omega_{ij}^\mathfrak{p})w_0 =\begin{cases} \Omega_{ij}^\mathfrak{p}& \text{ for all } 0<i<j\leq n,\\ -\Omega_{0j}^\mathfrak{p}& \text{ when } i = 0.\end{cases}$$

\end{lemma}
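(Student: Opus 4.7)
The plan is to exploit the explicit formula $\pi(w_0) = \mathrm{id} \otimes \xi \otimes \xi \otimes \cdots \otimes \xi$ established just before the statement, together with the fundamental fact that conjugation by $\xi$ implements the Cartan involution $\theta$ on $\mathfrak{g}$.

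First I would observe that the action of $w_0$ on $X \otimes V^{\otimes k}$ is trivial on the $X$-factor and is $\xi$ on each $V$-factor. Therefore, for any $b \in \mathfrak{g}$,
$$w_0 (b)_i w_0^{-1} = \begin{cases} (b)_0 & \text{if } i = 0, \\ (\xi b \xi^{-1})_i = (\theta(b))_i & \text{if } i > 0. \end{cases}$$
Since $w_0^2 = 1$, conjugation by $w_0$ equals $w_0 (\cdot) w_0$. Next I would recall that $\theta$ acts as $+1$ on $\mathfrak{k}$ and $-1$ on $\mathfrak{p}$, and that the basis $B$ is chosen to respect the decomposition $\mathfrak{g} = \mathfrak{k} \oplus \mathfrak{p}$, so the dual basis $B^*$ with respect to the Killing form also respects this decomposition (because $\mathfrak{k}$ and $\mathfrak{p}$ are orthogonal under the Killing form).

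Then I would split the verification into the four cases appearing in the statement. For $\Omega_{ij}^{\mathfrak{k}}$ with $0 < i < j \leq k$, each summand $(b)_i \otimes (b^*)_j$ with $b, b^* \in \mathfrak{k}$ is conjugated to $(\theta(b))_i \otimes (\theta(b^*))_j = (b)_i \otimes (b^*)_j$, giving invariance. For $\Omega_{0j}^{\mathfrak{k}}$, the $i=0$ slot is untouched and the $j$-slot fixes $b^* \in \mathfrak{k}$, so again invariance holds. For $\Omega_{ij}^{\mathfrak{p}}$ with $0 < i < j \leq k$, each summand picks up a sign from each of the two factors, so $(-b)_i \otimes (-b^*)_j = (b)_i \otimes (b^*)_j$, giving invariance. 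Finally, for $\Omega_{0j}^{\mathfrak{p}}$, the $i=0$ factor is untouched while the $j$-factor contributes a single sign, so we obtain $-\Omega_{0j}^{\mathfrak{p}}$.

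There is no real obstacle here: the only point requiring a little care is that the basis $B$ and its Killing dual $B^*$ both decompose along $\mathfrak{k} \oplus \mathfrak{p}$, so that the sums defining $\Omega_{ij}^{\mathfrak{k}}$ and $\Omega_{ij}^{\mathfrak{p}}$ are actually $\theta$-invariant as formal sums. Once this is noted, the rest is a one-line calculation per case.
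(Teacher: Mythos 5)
Your proof is correct and follows essentially the same route as the paper: use $\pi(w_0) = \mathrm{id} \otimes \xi \otimes \cdots \otimes \xi$, observe that conjugation by $\xi$ is $+1$ on $\mathfrak{k}$ and $-1$ on $\mathfrak{p}$, and track the resulting signs, with the $i=0$ slot left untouched. Your explicit remark that $B^*$ also respects $\mathfrak{k}\oplus\mathfrak{p}$ (since $\mathfrak{k}\perp\mathfrak{p}$ under the Killing form) is a small point the paper leaves implicit but relies on.
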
 

\begin{proof} Recall $\xi g \xi = \begin{cases} g  & \text{ if } g \in \mathfrak{k},\\ -g &\text{ if } g \in \mathfrak{p}.\end{cases}$ Therefore one finds that $\pi(w_0) = id \otimes \xi \otimes ...\xi$ commutes with $\Omega_{ij}^{\mathfrak{k}}  = \sum_{b \in B \cap \mathfrak{k}} (b)_i \otimes (b^*)_j.$
For $\Omega_{ij}^\mathfrak{p}$ we have:

$$\begin{array}{rcl}
w_0 (\Omega_{ij}^\mathfrak{p})w_0 &=&\left (id \otimes \xi \otimes \xi \otimes ...\otimes\xi\right) \Omega_{ij}^\mathfrak{p}\left( id \otimes \xi \otimes ...\otimes \xi\right),\\[1ex]
&=&\left( id \otimes \xi \otimes \xi \otimes ...\otimes\xi\right) \sum_{b \in B \cap \mathfrak{p}} (b)_i \otimes (b^*)_j \left(id \otimes \xi \otimes ...\otimes \xi \right),\\[1ex]
 &=&\begin{cases} \sum_{b \in B \cap \mathfrak{p}} ( b )_i \otimes (\xi b\xi)_j & \text{ if } i= 0,\\ \sum_{b \in B \cap \mathfrak{p}} (\xi b \xi)_i \otimes (\xi b\xi)_j & \text{ if } i \neq0,\end{cases}\\[1ex]
&=&\begin{cases} \sum_{b \in B \cap \mathfrak{p}} ( b )_i \otimes (-b)_j & \text{ if } i= 0,\\ \sum_{b \in B \cap \mathfrak{p}} (-b)_i \otimes (- b)_j & \text{ if } i \neq0,\end{cases}\\[1ex]
&=&\begin{cases} -\sum_{b \in B \cap \mathfrak{p}} ( b )_i \otimes (b)_j & \text{ if } i= 0,\\ \sum_{b \in B \cap \mathfrak{p}} (b)_i \otimes ( b)_j & \text{ if } i \neq0.\end{cases}
\end{array}$$
\end{proof}

\begin{definition} Let $^\bullet: \mathfrak{B}_k^\theta \to \mathfrak{B}_k^\theta$ be the conjugate linear antihomomorphism defined on the generators as follows:
$$z_i^\bullet =z_i$$
$$g^\bullet = g^{-1} \text{ for } g \in W(B_k)$$
$$e_{i,i+1}^\bullet = e_{i,i+1}.$$

\end{definition} 

\begin{remark} To check this antihomomorphism is well defined one must just check that the relations in Definition \ref{BCbrauerdef} are fixed. \end{remark}

\begin{definition} Let $^*: \mathfrak{B}_k^\theta \to \mathfrak{B}_k^\theta$ by the antihomomorphism such that,
$$b^* = w_0 b^\bullet w_0.$$

\end{definition} 

\begin{remark} Since $w_0$ is central in the finite Brauer algebra then $g^* = g^{-1}$ for $g \in W(B_k)$ and $e_{i,j}^* = e_{i,j}.$\end{remark}

\begin{lemma} \label{quotientofstar} Under the quotients in Lemma \ref{quotienttohecke} the antihomomorphisms $^*: \mathfrak{B}_k^\theta \to\mathfrak{B}_k^\theta$ and $^\bullet: \mathfrak{B}_k^\theta\to\mathfrak{B}_k^\theta$ descend to the antihomomorphisms
$^*: \mathbb{H}_k(c) \to \mathbb{H}_k(c)$ and $^\bullet: \mathbb{H}_k(c) \to \mathbb{H}_k(c)$ respectively.\end{lemma}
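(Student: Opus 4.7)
The plan is to reduce the statement to showing that the two ideals $I_e$ and $I_c^r$ defining the quotient $\mathfrak{B}_k^\theta[m_0,m_1]/\langle I_e, I_c^r\rangle \cong \mathbb{H}_k(c)$ from Lemma \ref{quotienttohecke} are stable under both $^\bullet$ and $^*$, so that these antihomomorphisms descend to $\mathbb{H}_k(c)$, and then to check that the descended maps agree with Definition \ref{Heckestar} on the generators. Throughout I assume $c, r \in \mathbb{R}$, which covers all values $c_\mu, r_\mu$ of Table \ref{muconstants} where the star operations will be applied.

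First I would handle $I_e$. Stability under $^\bullet$ is immediate from $e_{i,i+1}^\bullet = e_{i,i+1}$ together with the antihomomorphism property: every generator of $I_e$ has the form $a\, e_{i,i+1}\, b$, and $(a\, e_{i,i+1}\, b)^\bullet = b^\bullet e_{i,i+1} a^\bullet \in I_e$. For $^*$, using $x^* = w_0 x^\bullet w_0$ and the fact (shown earlier in the excerpt) that $w_0$ is central in the finite Brauer algebra, $(e_{i,i+1})^* = w_0 e_{i,i+1} w_0 = e_{i,i+1}$, so $I_e$ is $^*$-stable as well.

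Next I would check that the generator $g := \theta_k z_k + z_k \theta_k - 2c + 2r\theta_k$ of $I_c^r$ is fixed by both $^\bullet$ and $^*$. For $^\bullet$, this is direct from $z_k^\bullet = z_k$, $\theta_k^\bullet = \theta_k$, and the reality of $c, r$. For $^*$, the key computation is $w_0 z_k w_0 = \theta_k z_k \theta_k$: writing $w_0 = \theta_1\theta_2\cdots\theta_k$ and using that the $\theta_j$ pairwise commute and that $[\theta_j, z_k] = 0$ for $j < k$ (Definition \ref{BCbrauerdef}), the $\theta_j$ with $j<k$ pass through $z_k$ and cancel against their right-hand copies, leaving only $\theta_k z_k \theta_k$. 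Combined with $w_0 \theta_k w_0 = \theta_k$ and $\theta_k^2 = 1$, a short bookkeeping shows $(\theta_k z_k)^* = \theta_k z_k$ and $(z_k \theta_k)^* = z_k \theta_k$, giving $g^* = g$. Hence $I_c^r$ is $^*$-stable.

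With both ideals stable, $^\bullet$ and $^*$ descend to antihomomorphisms of $\mathbb{H}_k(c)$. To identify them with the operations in Definition \ref{Heckestar}, recall that the quotient of Lemma \ref{quotienttohecke} sends $z_i \mapsto \epsilon_i - r$ and is the identity on $W(B_k)$. Since the $\epsilon_i$ lie in the real form $\hat{X} \subset \hat{\mathfrak{t}}$, we have $\bar{\epsilon_i} = \epsilon_i$, so $(\epsilon_i - r)^\bullet = \epsilon_i - r$, matching the image of $z_i^\bullet = z_i$; the group relations $g^\bullet = g^{-1}$ match trivially, and a comparison with the Lusztig-to-Drinfeld correspondence $\tilde{\alpha} = \alpha - \tfrac12 \sum_\gamma \mathbf{c}(\gamma)\langle\gamma,\alpha\rangle s_\gamma$ shows this is consistent with $\tilde{\alpha}^\bullet = \overline{\tilde{\alpha}}$. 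Because $x^* = w_0\, x^\bullet\, w_0$ in both algebras by construction, the identification for $^\bullet$ immediately yields the identification for $^*$. The main technical obstacle is the commutation $w_0 z_k w_0 = \theta_k z_k \theta_k$; once that is settled, the rest of the argument is a direct comparison of generators.
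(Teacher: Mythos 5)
Your proof is correct and takes essentially the same route as the paper: show that $^\bullet$ kills the relations $e_{i,i+1}=0$ and $\theta_k z_k + z_k\theta_k - 2c + 2r\theta_k = 0$, identify the descended map with $^\bullet$ on $\mathbb{H}_k(c)$, and then transfer to $^*$ via the relation $h^*=w_0 h^\bullet w_0$. The paper's proof is terser and stops at exactly this point.

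One small economy you missed: the explicit computation $w_0 z_k w_0 = \theta_k z_k \theta_k$, which you flag as the main technical obstacle, is actually unnecessary. Once you know $\langle I_e, I_c^r\rangle$ is a two-sided ideal stable under $^\bullet$, it is automatically stable under $h \mapsto w_0 h^\bullet w_0 = h^*$, since two-sided ideals are closed under left and right multiplication by $w_0$. This is the observation the paper uses implicitly when it says that $^*$ descends "since $h^*=w_0h^\bullet w_0$ in both algebras." Your direct check is harmless and correct, but the abstract argument lets you skip it.
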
 

\begin{proof}The operation $^\bullet$ fixes $e_{i,i+1}$ and $$\theta_k z_k + z_k \theta_k = 2c - 2r\theta_k.$$
Therefore   $^\bullet$ on $\mathfrak{B}_k^\theta$ descends to $\mathbb{H}_k(c)$. On the generators of $\mathbb{H}_k(c)$ it fixes the affine generators and is the inverse antihomomorphism on the group $W( B_k)$.  Hence the operation $^\bullet$ on $\mathfrak{B}_k^\theta$ descends to the antihomomorphism $^\bullet$ on $\mathbb{H}_k(c)$. 
Since 
$$h^*= w_0 h^\bullet w_0,$$
in both $\mathfrak{B}_k^\theta$ and $\mathbb{H}_k(c)$ then the star operation $^*$ on $\mathfrak{B}_k^\theta$ descends to $^*$ on $\mathbb{H}_k(c)$.
\end{proof}
We give a new set of generators for $\mathfrak{B}_k^\theta$.
\begin{definition}  Define
$$\tilde{z_i} =\frac{z_i - w_0z_i w_0}{2},\text{ for } i = 1,...,k,$$
then
 $$\mathfrak{B}_k^\theta \cong\langle\tilde{z_i}, s_{j,j+1}, e_{j,j+1},\theta_i\rangle.$$
\end{definition} 
The operators $\tilde{z_i}$ form a Drinfeld type presentation for $\mathfrak{B}_k^\theta$, they descend to the Drinfeld presentation of $\mathbb{H}_k(c)$ under the quotients defined in \ref{quotienttohecke}. As operators on $X \otimes V^{\otimes k}$:
$$\begin{array}{rcl} \pi(\tilde{z_i})& =& \frac{1}{2}\pi(z_i - w_0 z_i w_0)\\[1ex]
&= &\frac{1}{2}\left ( \sum_{j<i} \Omega_{ij} - (\xi)_1(\xi)_2...(\xi))_k \sum_{j <i} \Omega_{ij} (\xi)_1(\xi)_2...(\xi))_k\right ),\\[1ex]
&=& \frac{1}{2} \left ( \sum_{j<i} \Omega_{ij}^\mathfrak{k} + \Omega_{ij}^\mathfrak{p} - (\xi)_1(\xi)_2...(\xi))_k \sum_{j <i} \Omega_{ij}^\mathfrak{k} + \Omega_{ij}^\mathfrak{p} (\xi)_1(\xi)_2...(\xi))_k\right ),\\[1ex]
&=& \frac{1}{2} \left ( \sum_{j<i} \Omega_{ij}^\mathfrak{k} + \Omega_{ij}^\mathfrak{p} + \Omega_{0i}^\mathfrak{p}  - \Omega_{0i}^\mathfrak{k} \sum_{0<j <i} \Omega_{ij}^\mathfrak{k} + \Omega_{ij}^\mathfrak{p}\right ),\\[1ex]
&=& \Omega_{0i}^\mathfrak{p}.\\[1ex]
\end{array}$$

\begin{remark} With this presentation of $\mathfrak{B}_k^\theta$ the operation $^*$ is defined as 
$$\begin{array}{rcl}
\tilde{z_i}^* &=& - \tilde{z_i},\\[1ex]
g^*& =& g^{-1} \text{ for all } g \in w(B_k),\\[1ex]
e_{i,i+1}^* &= &e_{i,i+1}.
\end{array}$$
\end{remark} 

\begin{definition} Let $X$ be a complex vector space, a Hermitian form  $\langle,\rangle_X$ on $X$ is a map $\langle,\rangle_X : X \times X \to \mathbb{C}$ such that 
$$\langle \lambda_1x_1 + \lambda_2x_2,x'\rangle_X = \lambda_1\langle x_1',x'\rangle_X +  \lambda_2\langle x_2',x\rangle_X \text{ for all } x_1,x_2,x' \in X, \lambda_1,\lambda_2 \in \mathbb{C},$$
$$\langle x,\lambda_1x_1' + \lambda_2x_2'\rangle_X = \bar{\lambda}_1\langle x,x_1'\rangle_X +   \bar{\lambda}_2\langle x,x_2'\rangle_X \text{ for all } x_1',x_2',x \in X, \lambda_1,\lambda_2 \in \mathbb{C}.$$

\end{definition}

\begin{definition} Let $X$ be a $\mathcal{HC}(G)$-module.  A Hermitian form $\langle, \rangle_X$ is $*$-invariant if:

$$\langle g(x_1), x_2\rangle_X = \langle x_1, g^*(x_2)\rangle, \text{ for all } x_1,x_2 \in X\text{ and } g \in \mathfrak{g}.$$

\end{definition} 

\begin{definition} Let $U$ be an $\mathbb{H}_k(c)$-module.  A Hermitian form $\langle, \rangle_U$ on $U$ is invariant with respect to $^*$ if:

$$\langle h(x_1), x_2\rangle_X = \langle x_1, h^*(x_2)\rangle, \text{ for all } x_1,x_2 \in U\text{ and } h \in \mathbb{H}_k(c).$$
Similarly for $U$ a $\mathfrak{B}_k^\theta$-module, a Hermitian form $\langle,\rangle_U$ on $U$ is $^*$-invariant if 
$$\langle b(x_1), x_2\rangle_X = \langle x_1, b^*(x_2)\rangle, \text{ for all } x_1,x_2 \in U\text{ and } b \in \mathfrak{B}_k^\theta.$$
\end{definition} 

\begin{definition} A $\mathcal{HC}(G)$-module $X$ is unitary if there exists a   positive definite invariant Hermitian form on $X$.\end{definition} 

Similarly, an $\mathbb{H}_k(c)$-module $U$ is unitary if  $U$ has an invariant positive definite Hermitian form  
and a $\mathfrak{B}_k^\theta$-module is unitary if it has a positive definite invariant Hermitian form.

Recall $V$ is the defining matrix module of $G$. Let $\langle ,\rangle_V$ be a non-degenerate Hermitian form on $V$ such that $$\langle k v_1,v_2 \rangle = \langle v_1, k^{-1}v_2 \rangle \text{ for all } v_1,v_2 \in V, k \in K ,$$
$$\langle p v_1,v_2 \rangle = \langle v_1, pv_2 \rangle \text{ for all } v_1,v_2 \in V, p \in\mathfrak{p} .$$
This makes $V$ unitary with respect to $^\bullet$. 

\begin{definition}{c.f. \cite[(4,.4)]{CT11}}\label{inducedform}
Let $X$ be in $\mathcal{HC}(G)$ with a $*$-invariant Hermitian form $\langle ,\rangle_X$ then we endow $X \otimes V ^{\otimes k}$ with a Hermitian form defined on elementary tensors by $$\langle x_0 \otimes v_1 \otimes ...\otimes v_k , x_0'\otimes v_1'\otimes ...\otimes v_k'\rangle_{X \otimes V^{\otimes k}} = \langle x_0 , x_0' \rangle_X \langle v_1,v_1'\rangle_V ...\langle v_k,v_k'\rangle_V,$$ then extended to a Hermitian form.
For $\mu$ a character of $K$, define a Hermitian form on $F_{\mu,k}(X) = \Hom_K(\mu, X \otimes V^{\otimes k})$ by:
 $$ \langle \phi,\psi \rangle_{F_{\mu,k}} = \langle \phi(1),\psi(1)\rangle_{X \otimes V^{\otimes k}},\text{ for all } \phi, \psi \in \Hom_K(\mu, X \otimes V^{\otimes k}).$$
 \end{definition} 

\begin{remark}\label{unitarytensor} If $X$ is a unitary space then $\langle,\rangle_{X \otimes V^{\otimes k}}$ endows $X \otimes V^k$ as a unitary space. \end{remark}
\begin{lemma} \label{pronVV} Let $V$ be the complex matrix module of $G = O(p,q)$ or $Sp_{2n}(\mathbb{R})$ and $pr_{12}$ be the projection of $V\otimes V$ onto its trivial $G$ submodule. Define  $\langle ,\rangle_{V\otimes V}$ on $V \otimes V$ by 
$$\langle v_1 \otimes v_2, v_1'\otimes v_2' \rangle_{V\otimes V} = \langle v_1 ,v_1'\rangle_V \langle v_2,v_2'\rangle_V,$$
and extend to a Hermitian form. Then 
$$\langle pr_{12} (v_1 \otimes v_2) ,v_1'\otimes v_2' \rangle_{V \otimes V} = \langle v_1 \otimes v_2 , pr_{12} (v_1'\otimes v_2') \rangle_{V \otimes V}.$$
\end{lemma}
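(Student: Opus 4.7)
The plan is to identify $pr_{12}$ with the orthogonal projection onto the trivial $\mathfrak{g}$-submodule $T \subset V \otimes V$ with respect to $\langle,\rangle_{V\otimes V}$; self-adjointness is then automatic from Hermitian linear algebra, so the whole question reduces to an orthogonality statement.

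First I would verify the structural invariance property: for every $g \in \mathfrak{g}$ acting diagonally on $V\otimes V$,
\[
\langle g\cdot u, u'\rangle_{V\otimes V} = \langle u, g^\bullet\cdot u'\rangle_{V\otimes V}, \qquad u,u'\in V\otimes V.
\]
This is a direct calculation on elementary tensors. For $k\in\mathfrak{k}_0$ the given $K$-invariance of $\langle,\rangle_V$ differentiates to $\langle k v_1,v_2\rangle=-\langle v_1,kv_2\rangle=\langle v_1,k^\bullet v_2\rangle$, and for $p\in\mathfrak{p}$ the hypothesis already gives $\langle pv_1,v_2\rangle=\langle v_1,pv_2\rangle=\langle v_1,p^\bullet v_2\rangle$. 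Summing the two tensor slots produces the displayed identity.

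Next I would invoke Lemma \ref{decompofVtensorV} to write $V\otimes V=T\oplus T^c$ as $\mathfrak{g}$-modules, where $T$ is the one-dimensional trivial summand and $T^c$ is a direct sum of non-trivial irreducible $\mathfrak{g}$-modules ($\Lambda^2 V\oplus S^2V/T$ for $\mathfrak{so}_{2n+1}$, or $\Lambda^2 V/T\oplus S^2V$ for $\mathfrak{sp}_{2n}$). By construction, $pr_{12}$ is the algebraic projection onto $T$ along $T^c$. The key step is to show $T\perp T^c$. For any non-trivial irreducible summand $U\subset T^c$, irreducibility forces $\mathfrak{g}\cdot U=U$ (it is a non-zero $\mathfrak{g}$-submodule of $U$), so each $u\in U$ admits a presentation $u=\sum_i g_i w_i$ with $g_i\in\mathfrak{g}$, $w_i\in U$. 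For any $v_0\in T$ the triviality of the $\mathfrak{g}$-action gives $g_i^\bullet v_0=0$, whence
\[
\langle v_0,u\rangle_{V\otimes V}=\sum_i\langle v_0,g_iw_i\rangle_{V\otimes V}=\sum_i\langle g_i^\bullet v_0,w_i\rangle_{V\otimes V}=0.
\]

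Once $T\perp T^c$ is in hand, $pr_{12}$ coincides with the orthogonal projection onto $T$, and orthogonal projections are self-adjoint with respect to the defining form, which is exactly the asserted identity. I do not expect a serious obstacle: the argument is short, and the only subtle point is ensuring $T$ is the unique trivial $\mathfrak{g}$-submodule of $V\otimes V$, which is precisely the content of the explicit decomposition recorded in Lemma \ref{decompofVtensorV}.
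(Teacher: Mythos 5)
Your proof is correct, and it takes a genuinely different route from the paper's. The paper's one-line argument appeals to the Peter--Weyl theorem applied to the compact group $K$ acting $\bullet$-unitarily on $V\otimes V$: this gives orthogonality of the $K$-isotypic components, and the author asserts that the trivial submodule is therefore orthogonal to its complement. The subtlety your argument sidesteps is that the trivial $G$-submodule $T$ is in general a \emph{proper} subspace of the trivial $K$-isotypic component. For instance when $G=Sp_{2n}(\mathbb{R})$, one has $V|_{\mathfrak{k}}\cong\mathbb{C}^n\oplus(\mathbb{C}^n)^*$ as a $\mathfrak{gl}_n$-module, so $V\otimes V$ carries a two-dimensional trivial $\mathfrak{gl}_n$-isotypic space (one copy in $\Lambda^2 V$, one in $S^2V$), while $T$ is only one of these two copies; Peter--Weyl separates isotypic components from one another, but says nothing about orthogonality \emph{within} an isotypic component, so it does not by itself give $T\perp T^c$. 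Your argument closes this by using the full $\mathfrak{g}$-action rather than only $K$: the $\bullet$-invariance $\langle gu,u'\rangle=\langle u,g^\bullet u'\rangle$ together with the fact (from Lemma \ref{decompofVtensorV}) that $T^c$ is a sum of \emph{non-trivial} irreducibles, hence satisfies $\mathfrak{g}\cdot T^c=T^c$, lets you move $g_i^\bullet$ across the form to annihilate any $v_0\in T$. This is tighter and, I would say, fills a genuine gap in the paper's proof; the paper's route could also be rescued by additionally noting $\Lambda^2V\perp S^2V$ (eigenspaces of the self-adjoint involution $s_{12}$), but the paper does not say this. Your final reduction of self-adjointness of $pr_{12}$ to the orthogonality $T\perp T^c$ is the same as the paper's first sentence and is unobjectionable.
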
 

\begin{proof}
It is sufficient to prove that the trivial submodule in $V\otimes V$ and its complement are orthogonal with the form $\langle , \rangle_{V\otimes V}$. The Peter-Weyl Theorem \cite[Theorem 1.12]{PW27} states that a unitary module of a compact group decomposes as an orthogonal direct sum of irreducibles. Considering $V \otimes V$ as a $^\bullet$ unitary $K$-module , we have that the trivial submodule of $V \otimes V$ is orthogonal to its complement with respect to $\langle , \rangle_V$. \end{proof} 

\begin{lemma}\label{unitarypreserved} Suppose $X \in \mathcal{HC}(G)$  with a $^*$-invariant Hermitian form $\langle,\rangle_X$ then $F_{\mu,k}(X) \in \mathfrak{B}_k^\theta$-mod has a $^*$-invariant Hermitian form $\langle, \rangle_{F_{\mu,k}}$. Furthermore if $X$ is unitary then $F_{\mu,k}(X)$ is unitary.  \end{lemma}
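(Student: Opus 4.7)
The plan is to verify $*$-invariance of the induced form by checking each generator of $\mathfrak{B}_k^\theta$ in its Drinfeld presentation, namely the Weyl group generators $s_{i,i+1}$ and $\theta_i$, the idempotents $e_{i,i+1}$, and the affine elements $\tilde{z}_i = \Omega^\mathfrak{p}_{0i}$. Since $\mu$ is a character, the evaluation map $\phi \mapsto \phi(1)$ identifies $F_{\mu,k}(X)$ with the $\mu$-isotypic subspace of $X \otimes V^{\otimes k}$ and identifies the induced form with the restriction of $\langle,\rangle_{X\otimes V^{\otimes k}}$; the $\mathfrak{B}_k^\theta$-action preserves this subspace because it commutes with $K$. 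So it suffices to test each identity on elementary tensors in $X \otimes V^{\otimes k}$.

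For $s_{i,i+1}$ I would just unpack the definition of $\langle,\rangle_{X\otimes V^{\otimes k}}$ and use Hermitian symmetry of $\langle,\rangle_V$ to obtain $\langle s_{i,i+1} u, u' \rangle = \langle u, s_{i,i+1} u' \rangle$, matching $s_{i,i+1}^* = s_{i,i+1}$. The idempotent $e_{i,i+1}$ is a real scalar multiple of the projector $\pr_{i,i+1}$, so self-adjointness is immediate from Lemma \ref{pronVV} applied to the two relevant tensor slots, matching $e_{i,i+1}^* = e_{i,i+1}$. For $\theta_i = (\xi)_i$ I would check case by case that $\xi$ is Hermitian with respect to $\langle,\rangle_V$: for $Sp_{2n}(\mathbb{R})$, $\xi \in \mathfrak{k}$ with $\bar\xi = -\xi$, so $\xi^\bullet = -\bar\xi = \xi$; for $O(p,q)$, $\xi$ is diagonal with real $\pm 1$ entries, hence manifestly Hermitian. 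This gives $\theta_i^* = \theta_i$.

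The main step, and the one I expect to be the main obstacle, is the affine generator $\tilde{z}_i = \Omega^\mathfrak{p}_{0i}$. The subtlety is that the form on $X$ is $*$-invariant while the form on $V$ is only $\bullet$-invariant, and the two operations differ precisely by a sign on $\mathfrak{p}$; one must check that these two contributions conspire to produce a single overall minus sign. Since $\Omega^\mathfrak{p}_{0i}$ is basis-independent, I would choose a Killing-orthonormal real basis $\{b_\alpha\}$ of $\mathfrak{p}_0$ (the Killing form is positive definite there) and write $\tilde{z}_i = \sum_\alpha (b_\alpha)_0 \otimes (b_\alpha)_i$. Each $b_\alpha$ satisfies $b_\alpha^* = -b_\alpha$ on $X$ while being self-adjoint on $V$; expanding $\langle \tilde{z}_i u, u' \rangle$ on elementary tensors, the single minus sign from the $X$-slot and the absence of a sign from the $V$-slot give $\langle \tilde{z}_i u, u' \rangle = -\langle u, \tilde{z}_i u' \rangle$, matching $\tilde{z}_i^* = -\tilde{z}_i$ in the Drinfeld presentation. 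For the unitarity claim, Remark \ref{unitarytensor} gives that if $\langle,\rangle_X$ is positive definite then so is $\langle,\rangle_{X \otimes V^{\otimes k}}$, and the injectivity of $\phi \mapsto \phi(1)$ then yields positive definiteness of the restriction to $F_{\mu,k}(X)$.
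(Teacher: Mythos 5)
Your proof is correct and follows essentially the same route as the paper: check $*$-invariance generator by generator ($s_{i,i+1}$, $\theta_i$, $e_{i,i+1}$, $\tilde z_i = \Omega^{\mathfrak p}_{0i}$), using the $*$-invariance of $\langle,\rangle_X$ against the $\bullet$-invariance of $\langle,\rangle_V$ on the $\mathfrak p$-slot to produce the single overall sign, and invoking Lemma \ref{pronVV} for the idempotents. Your two small refinements — choosing a Killing-orthonormal real basis of $\mathfrak p_0$ so that $b_\alpha = b_\alpha^*$ and complex conjugation can be dropped, and treating $\xi$ for $O(p,q)$ as a group element rather than a Lie algebra element (since there $\xi \notin \mathfrak{k}$, being non-skew-symmetric) — actually make a couple of implicit steps in the paper's argument cleaner, but the substance and structure are the same.
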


Ciubotaru and Trapa \cite{CT11} prove this result for spherical principal series modules mapping to graded Hecke algebras. We extend this to any Harish-Chandra module which requires considering the image as a Type B/C VW-algebra module. 

\begin{proof} We show that the Hermitian form is invariant under the generators $\tilde{z_i} $, $s_{i,i+1}$, $\theta_j$ and $e_{i,i+1}$. For $\tilde{z_i}$, $\tilde{z_i}^* = - \tilde{z_i}$ and $\pi(\tilde{z_i}) = \Omega_{0i}^\mathfrak{p}$.  
The form $\langle , \rangle_X$ is a $^*$-invariant form on $X$ and $\langle, \rangle_V$ is a $^\bullet$-invariant form on $V$. Let $\phi$, $\psi \in F_{\mu,k}(X) = \Hom_K(\mu, X\otimes V^{\otimes k})$, then

$$\langle \pi \tilde{z_i}(\phi), \psi\rangle_{F{\mu,k}(X)}= \langle\pi( \tilde{z_i})(\phi(1)), \psi(1) \rangle_{X \otimes V^{\otimes k}}.$$
Since $\pi(\tilde{z_i}) = \Omega_{0i}^\mathfrak{p} = \sum_{b \in B \cap \mathfrak{p} }(b)_0 \otimes (b^*)_i,$
$$\langle \pi \tilde{z_i}^*(\phi), \psi\rangle_{F{\mu,k}(X)}=  \langle(\Omega_{0i}^\mathfrak{p}) ^* \phi(1), \psi(1) \rangle_{X \otimes V^{\otimes k}},$$
$$=  -\langle\Omega_{0i}^\mathfrak{p} \phi(1), \psi(1) \rangle_{X \otimes V^{\otimes k}},$$
$$= -\left\langle\left( \sum_{b \in B \cap \mathfrak{p}} (b)_0 \otimes (b)_i\right) \phi(1),\psi(1)\right\rangle_{X\otimes V ^{\otimes k}}.$$
Denote $\phi(1)$ by $ \sum x_0 \otimes v_1 \otimes ...\otimes v_k$ and $\psi(1)$ by $\sum x_0' \otimes v_1' \otimes ...\otimes v_k' $ substituting in the definition of $\langle , \rangle_{X\otimes V^{\otimes k}}$ then 
$$\langle \pi \tilde{z_i}(\phi), \psi\rangle_{F{\mu,k}(X)}=\sum_{b \in B \cap \mathfrak{p}}\sum\langle -(b)_0(x_0),x_0'\rangle_X\langle v_1,v_1'\rangle_V ... \langle (b)_iv_i,v_i' \rangle_V...\langle v_k,v_k' \rangle_V.$$
The form $\langle , \rangle_X$ is $^*$-invariant for $\mathfrak{g}$ and $\langle, \rangle_V$ is $^\bullet$-invariant for $\mathfrak{g}$, hence $\langle -b x_0 , x_0'\rangle_X =\langle x_0 , b x_0'\rangle_X$ and $\langle b v_i ,v_i'\rangle_V =\langle v_i , b v_i'\rangle_X$ for all $b \in \mathfrak{p}$:
$$\langle \pi \tilde{z_i}(\phi), \psi\rangle_{F{\mu,k}(X)}=\sum_{b \in B \cap \mathfrak{p}}\sum\langle (x_0),(b)_0x_0'\rangle_X\langle v_1,v_1'\rangle_V ... \langle v_i,(b)_iv_i' \rangle_V...\langle v_k,v_k' \rangle_V.$$
Reversing through the definitions, we show 
$$ \langle \pi(\tilde{z_i})^*\phi, \psi \rangle_{F_{\mu,k}}= \langle \phi, \pi(\tilde{z_i})\psi \rangle_{F_{\mu,k}}.$$
The element $\theta_j$ acts by $(\xi)_j$  where $\xi \in \mathfrak{k}$, hence $\langle \xi v ,v' \rangle_V = \langle v,\xi v' \rangle_V$. Therefore
$$\langle x_0, x_0'\rangle_X \langle v_1, v_1'\rangle_V...\langle (\xi)v_j, v_j'\rangle_V ... \langle v_k , v_k' \rangle_V =  \langle x_0, x_0'\rangle_X \langle v_1, v_1'\rangle_V...\langle v_j,(\xi) v_j'\rangle_V ... \langle v_k , v_k' \rangle_V.$$
Similarly for $s_{i,i+1}$

$$\begin{array}{rl}
&\langle s_{i,i+1} (x_0 \otimes v_1 \otimes ...v_k ) , x_0'\otimes v_1' \otimes ... \otimes v_k' \rangle_{X\otimes V^{\otimes k}}\\[1ex]
  
=&\langle x_0 \otimes v_1 \otimes ...\otimes v_{i+1} \otimes v_{i} \otimes ... \otimes v_k  , x_0'\otimes v_1' \otimes ... \otimes v_k' \rangle_{X\otimes V^{\otimes k}}\\[1ex]
=&\langle x_0'\otimes v_1 \otimes ... \otimes v_k, x_0 \otimes v_1' \otimes ...\otimes v_{i+1}' \otimes v_{i}' \otimes ... \otimes v_k    \rangle_{X\otimes V^{\otimes k}}\\[1ex]
=& \langle x_0 \otimes v_1 \otimes ...v_k  , s_{i,i+1} ( x_0'\otimes v_1' \otimes ... \otimes v_k') \rangle_{X\otimes V^{\otimes k}}.\end{array}$$
  
The projection $e_{i,i+1}$ acts on elementary tensors $x_0 \otimes v_1 \otimes ...\otimes v_k$ by 
$$e_{i,i+1}(x_0 \otimes v_1 \otimes ...\otimes v_k) = m x_0 \otimes v_1 \otimes ...\otimes v_{i-1} \otimes \pr_{12}(v_i \otimes v_{i+1} ) \otimes ..,\otimes v_k.$$
Then 
$$\begin{array}{rl}
&\langle e_{i,i+1}(x_0 \otimes v_1 \otimes ...\otimes v_k), x_0' \otimes v_1'\otimes ...\otimes v_k'\rangle_{X \otimes V^{\otimes k}},\\[1ex]
=& m\langle x_0 \otimes v_1 \otimes ...\otimes v_{i-1} \otimes \pr_{12}(v_i \otimes v_{i+1} ) \otimes ...\otimes v_k,  x_0' \otimes v_1'\otimes ...\otimes v_k'\rangle_{X \otimes V^{\otimes k}},\\[1ex]
=& \langle x_0,x_0'\rangle_X \langle v_1,v_1'\rangle_V ...\langle  \pr_{12}(v_i \otimes v_{i+1} ) , v_i'\otimes v_{i+1}' \rangle_{V \otimes V} ... \langle v_k,v_k'\rangle_V.\end{array}$$
Using Lemma \ref{pronVV} ,
$$= \langle x_0 \otimes v_1\otimes ...\otimes v_k,  x_0' \otimes v_1' \otimes ...\otimes v_{i-1}' \otimes \pr_{12}(v_i' \otimes v_{i+1}' ) \otimes ..,\otimes v_k', \rangle_{X \otimes V^{\otimes k}},$$
Therefore 
$$\langle e_{i,i+1}(\phi), \psi\rangle_{F_{\mu, k}}= \langle \phi, e_{i,i+1}(\psi)\rangle_{F_{\mu,k}}.$$

The module $F_{\mu,k}(X)$ has induced Hermitian form $\langle,\rangle_{F_{\mu,k}}$ which is $^*$-invariant on the generators of $\mathfrak{B}_k^\theta$. Hence $\langle,\rangle_{F_{\mu,k}}$ is a $^*$-invariant form. If $X$ is unitary then $\langle,\rangle_{X \otimes V^{\otimes k}}$ is positive definite. Hence $\langle,\rangle_{F_{\mu,k}}$ is a positive definite invariant Hermitian form and $F_{\mu,k}(X)$ is unitary. 

\end{proof}

\begin{definition} Let $X \in \mathcal{HC}(G)$, $\mathfrak{B}_k^\theta$-mod or $\mathbb{H}_k(c)$-mod module with invariant Hermitian form $\langle , \rangle_X$. We define the Langlands quotient $\overline{X}$ to be: 
$$\overline{X}= X / \rad\langle , \rangle_X,$$
where $\rad\langle , \rangle$ is the radical of the form $\langle , \rangle$.
\end{definition}

\begin{lemma}\label{langlandsquotient} Let $X$ be in $\mathcal{HC}(G)$-mod with Hermitian invariant form $\langle , \rangle_X$ and Langlands quotient $\overline{X}$. The form $\langle , \rangle_{F_{\mu,k}}$ is the endowed hermitian form of $F_{\mu,k}(X)$ from Definition \ref{inducedform} then 
$$F_{\mu,k}(\overline{X}) = \overline{F_{\mu,k}(X)} =F_{\mu,k}(X)/ \rad\langle, \rangle_{F_{\mu,k}}.$$
\end{lemma}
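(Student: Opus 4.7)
The plan is to deduce the statement from two ingredients: the exactness of $F_{\mu,k}$ (Lemma \ref{exactness}), and the compatibility of the induced Hermitian form $\langle,\rangle_{F_{\mu,k}}$ with the tensor-product form on $X\otimes V^{\otimes k}$. Write $R=\rad\langle,\rangle_X$ so that $\overline{X}=X/R$. Applying the exact functor $F_{\mu,k}$ to the short exact sequence $0\to R\to X\to \overline{X}\to 0$ yields
$$0\longrightarrow F_{\mu,k}(R)\longrightarrow F_{\mu,k}(X)\longrightarrow F_{\mu,k}(\overline{X})\longrightarrow 0.$$
It therefore suffices to identify $F_{\mu,k}(R)$ with $\rad\langle,\rangle_{F_{\mu,k}}$ as submodules of $F_{\mu,k}(X)$.

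For this identification, first observe that because $\langle,\rangle_V$ is non-degenerate, the radical of the tensor form on $Y:=X\otimes V^{\otimes k}$ is exactly $R\otimes V^{\otimes k}$. Next, since $K$ is compact and $\langle,\rangle_Y$ is $K$-invariant, the Peter--Weyl decomposition $Y=\bigoplus_{\sigma\in\widehat K}Y^\sigma$ is orthogonal, and likewise for $R\otimes V^{\otimes k}$. In particular, the $\mu$-isotypic component $Y^\mu$ is orthogonal to all other isotypic components, and
$$\rad\bigl(\langle,\rangle_Y\big|_{Y^\mu}\bigr)=\rad\langle,\rangle_Y\,\cap\, Y^\mu=(R\otimes V^{\otimes k})^\mu.$$

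Because $\mu$ is a character, the evaluation map $\phi\mapsto \phi(1)$ gives an isomorphism $F_{\mu,k}(X)=\Hom_K(\mu,Y)\xrightarrow{\sim}Y^\mu$, and similarly $F_{\mu,k}(R)\xrightarrow{\sim}(R\otimes V^{\otimes k})^\mu$. Under this identification the form $\langle,\rangle_{F_{\mu,k}}$ of Definition \ref{inducedform} pulls back to the restriction of $\langle,\rangle_Y$ to $Y^\mu$. Consequently $\phi\in\rad\langle,\rangle_{F_{\mu,k}}$ iff $\phi(1)\in(R\otimes V^{\otimes k})^\mu$ iff $\phi\in F_{\mu,k}(R)$.

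Combining this identification with the exact sequence above gives
$$F_{\mu,k}(\overline{X})=F_{\mu,k}(X)/F_{\mu,k}(R)=F_{\mu,k}(X)/\rad\langle,\rangle_{F_{\mu,k}}=\overline{F_{\mu,k}(X)},$$
as claimed. The only mildly delicate step is the orthogonal decomposition along $K$-isotypes, which relies on the compactness of $K$; once that is in hand, the proof is essentially a bookkeeping exercise matching $\mu$-isotypic components on both sides.
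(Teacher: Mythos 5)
Your proof is correct and follows the same overall strategy as the paper: apply the exact functor $F_{\mu,k}$ to the short exact sequence $0\to R\to X\to\overline{X}\to 0$ and then identify $F_{\mu,k}(R)$ with $\rad\langle,\rangle_{F_{\mu,k}}$. The paper's own version of that last step is quite compressed — it simply asserts that because $\langle,\rangle_{F_{\mu,k}}$ is invariant and the form is non-degenerate on the relevant quotient, the two submodules coincide — whereas you actually justify it: you compute $\rad\langle,\rangle_{X\otimes V^{\otimes k}}=R\otimes V^{\otimes k}$ from non-degeneracy of $\langle,\rangle_V$, invoke orthogonality of $K$-isotypes (Peter--Weyl) so that restricting to the $\mu$-isotypic component intersects the radical correctly, and then use the evaluation isomorphism $\Hom_K(\mu,Y)\cong Y^\mu$ to match up the form of Definition \ref{inducedform} with the restriction of $\langle,\rangle_Y$. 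This is the same route, but spelled out; your version fills in precisely the justification the paper hand-waves, so if anything it is the more complete of the two.

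One small point worth making explicit: the identification $\rad\bigl(\langle,\rangle_Y|_{Y^\mu}\bigr)=(R\otimes V^{\otimes k})^\mu$ uses that each $K$-isotypic component of the Harish-Chandra module $X$ (and hence of $Y$) is finite-dimensional, so that non-degeneracy on $\overline{X}$ descends to non-degeneracy on each isotypic block of $\overline{X}\otimes V^{\otimes k}$. You implicitly rely on admissibility here, and it would be worth saying so, since the tensor-of-nondegenerate-is-nondegenerate argument is cleanest in finite dimensions.
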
 

\begin{proof} 
One can construct an exact sequence:
$$0 \longrightarrow \rad\langle , \rangle_X \longrightarrow X \longrightarrow \overline{X} \longrightarrow 0.$$
Exactness of the functors $F_{\mu,k}$ is given by Lemma \ref{exactness}. Hence there is an exact sequence:

$$0 \longrightarrow F_{\mu,k}( \rad\langle , \rangle_X) \longrightarrow F_{\mu,k}( X) \longrightarrow F_{\mu,k}(\overline{X}) \longrightarrow 0.$$
For the result it is sufficient to prove that $F_{\mu,k}(\rad \langle, \rangle_X) = \rad \langle,\rangle_{F_{\mu,k}}$. Since $\langle ,\rangle_{F_{\mu,k}}$ is an invariant form on $F_{\mu,k}(X)$ and a non-degenerate form on $\overline{F_{\mu,k}(X)}$ then $F_{\mu,k}\rad\langle,\rangle_X = \rad\langle,\rangle_{F_{\mu,k}}$. 
\end{proof}

\langlandsquotientthm

\begin{definition} We define subsets of $\mathfrak{a}^*$:
 $$\mathcal{U}_{\delta} = \{\nu \in \mathfrak{a}^*: \overline{X_{\delta}^\nu}\text{ is a unitary Harish-Chandra module}\}.$$
Similarly define $$\mathcal{U}_k(\mathbbm{1}) = \{\lambda \in \mathfrak{a}_k^*: \overline{X_{\lambda}} \text{ is a unitary } \mathbb{H}_k(c_\mu)\text{ module}\}$$ 
and
$$\mathcal{U}_{n-k}(\mathbbm{1}) = \{\bar{\lambda} \in \bar{\mathfrak{a}}_{n-k}^*: \overline{X({\bar{\lambda}}}) \text{ is a unitary } \mathbb{H}_{n-k}(c_{\underline{\mu}})\text{ module}\}.$$
\end{definition} 
Since $\mathfrak{a} = \mathfrak{a}_k \oplus \bar{\mathfrak{a}}_{n-k}$ we can associate a pair $(\lambda_k,\bar{\lambda}_{n-k}) \in \mathfrak{a}^* \times \bar{\mathfrak{a}}_{n-k}^*$ to a character of $\mathfrak{a}$ via: 
$$(\lambda_k,\bar{\lambda}_{n-k}): \mathfrak{a} \to \mathbb{C}$$
$$(\lambda_k,\bar{\lambda}_{n-k})(\mathfrak{a}_k) = \lambda_k(\mathfrak{a}_k),$$
$$(\lambda_k,\bar{\lambda}_{n-k})(\bar{\mathfrak{a}}_{n-k}) = \bar{\lambda}_{n-k}(\bar{\mathfrak{a}}_{n-k}).$$
 Theorem \ref{langlandsquotientpreserved} shows that the Langlands quotients of $X_{\delta}^\nu$ map under $F_{\mu,k}$ and $F_{\underline{\mu},n-k}$ to Langlands quotients of principal series modules for $\mathbb{H}_k(c_\mu)$ and $\mathbb{H}_{n-k}(c_{\underline{\mu}})$ hence we can formulate the following non-unitary test.

\begin{lemma} We have an inclusion of sets:
$$\mathcal{U}_{\delta} \subseteq \mathcal{U}_k(\mathbbm{1}) \times \mathcal{U}_{n-k}(\mathbbm{1}) .$$\end{lemma}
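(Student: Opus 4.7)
The plan is to chain together the functorial, exact, and Hermitian-preserving properties already established and then observe that the Hecke algebra structure on the image is the quotient of the $\mathfrak{B}_k^\theta$ structure by an ideal fixed by $*$.

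First I would take $\nu \in \mathcal{U}_\delta$, so that the Langlands quotient $\overline{X_\delta^\nu}$ carries a positive definite $*$-invariant Hermitian form. Applying Lemma \ref{unitarypreserved} to $X = \overline{X_\delta^\nu}$, the module $F_{\mu,k}(\overline{X_\delta^\nu})$ is a unitary $\mathfrak{B}_k^\theta$-module, and likewise $F_{\underline{\mu},n-k}(\overline{X_\delta^\nu})$ is a unitary $\mathfrak{B}_{n-k}^\theta$-module. By Lemma \ref{langlandsquotient}, the functor $F_{\mu,k}$ commutes with the Langlands quotient, so
$$F_{\mu,k}(\overline{X_\delta^\nu}) = \overline{F_{\mu,k}(X_\delta^\nu)},$$
and similarly for $F_{\underline{\mu},n-k}$.

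Next I would use the principal-to-principal theorem (\principaltoprincipalthm placeholder) together with Lemma \ref{quotientofstar}: the $\mathfrak{B}_k^\theta$-module $F_{\mu,k}(X_\delta^\nu)$ is isomorphic to the $\mathbb{H}_k(c_\mu)$ principal series $X(\nu_k)$, and the anti-involution $*$ on $\mathfrak{B}_k^\theta$ descends under the quotient $\mathfrak{B}_k^\theta / \langle I_e, I_{c_\mu}^{r_\mu}\rangle \cong \mathbb{H}_k(c_\mu)$ to the Hecke algebra anti-involution $*$ of Definition \ref{Heckestar}. Consequently the $*$-invariant Hermitian form inherited on $F_{\mu,k}(X_\delta^\nu)$ is precisely a Hermitian form on $X(\nu_k)$ invariant under the Hecke algebra $*$, and its radical is the Hecke submodule used to define $\overline{X(\nu_k)}$. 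Thus $\overline{F_{\mu,k}(X_\delta^\nu)}$ and $\overline{X(\nu_k)}$ agree as $\mathbb{H}_k(c_\mu)$-modules equipped with their non-degenerate Hermitian forms, and positive definiteness carries across. Hence $\nu_k \in \mathcal{U}_k(\mathbbm{1})$.

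The same argument applied to $F_{\underline{\mu},n-k}$ and the ideal cutting out $\mathbb{H}_{n-k}(c_{\underline{\mu}})$ gives $\bar{\nu}_{n-k} \in \mathcal{U}_{n-k}(\mathbbm{1})$. Combining these, $(\nu_k,\bar{\nu}_{n-k}) \in \mathcal{U}_k(\mathbbm{1}) \times \mathcal{U}_{n-k}(\mathbbm{1})$, which proves the inclusion. The one delicate point I would want to double-check is the compatibility in step three: that the Hermitian form on $F_{\mu,k}(X_\delta^\nu)$ inherited from $\langle,\rangle_X$ and $\langle,\rangle_V$ coincides with a Hecke-invariant form on $X(\nu_k)$ under the isomorphism of the principal-to-principal theorem. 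This is where the descent of $*$ from $\mathfrak{B}_k^\theta$ to $\mathbb{H}_k(c_\mu)$ (Lemma \ref{quotientofstar}) is essential, since without it unitarity as a Brauer-type module would not immediately transfer to unitarity as a Hecke algebra module.
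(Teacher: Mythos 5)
Your proposal is correct and follows the same chain of reasoning the paper leaves implicit: the paper simply cites Theorem \ref{langlandsquotientpreserved} together with Lemma \ref{unitarypreserved} and does not write out a separate proof, whereas you have unpacked exactly those ingredients (exactness, preservation of Hermitian forms, the Langlands-quotient compatibility, and the descent of $*$ via Lemma \ref{quotientofstar}). The point you flag as delicate, the compatibility of the $\mathfrak{B}_k^\theta$ and $\mathbb{H}_k(c_\mu)$ Hermitian structures, is precisely what Lemma \ref{quotientofstar} supplies, so the argument closes as you describe.
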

This inclusion of sets states that if we take a minimal principal series module $X$ and find that, under the functor $F_{\mu,k}$, the Langlands quotient of the image is not unitary then the Langlands quotient of $X$ is not unitary.
\nonunitarytestthm

The above working does not require the image to be a Hecke algebra module. Therefore, we have also proved the following theorem. 
\nonunitarybrauerthm

We finish with a toy example.
\begin{example} Let $G = Sp_{2}(\mathbb{R}) \cong SL_2(\mathbb{R})$. 
Then principal series modules are labelled by $\delta = \pm 1$ and $\nu \in \mathbb{C}$. The principal series modules $X_\delta^\nu$ are unitary exactly when $\nu = i b$ for $b \in \mathbb{R}$, that is $\nu$ is entirely imaginary. 
 
In this case all principal series modules are spherical principal series modules. The root system associated to $Sp_2$ has one root $\epsilon$ and the Weyl group is $\mathbb{Z}_2$ which acts by $-1$ on $\epsilon$. Here $\mathbb{H}(c)$ will be the graded Hecke algebra associated to type $B_1$ with parameter $c$. The algebra $\mathbb{H}(c)$ is generated by $\epsilon$ and $s \in \mathbb{Z}_2$ such that $s \epsilon = - \epsilon s + c$. We note that $s^* = s$ and $\epsilon^* = -\epsilon + c s$.  

Our theorem gives that 
$$F_{\triv,1}(X_1^\nu) \cong \Ind_{\mathbb{C}}^{\mathbb{H}(c)} \mathbbm{1}_\nu.$$

Note that $\Ind_{\mathbb{C}}^{\mathbb{H}(c) }\mathbbm{1}_\nu$ is two dimensional with basis $\{\mathbbm{1}_\nu, s \mathbbm{1}_\nu\}$ we will denote the module $\Ind_{\mathbb{C}}^{\mathbb{H}(c)} \mathbbm{1}_\nu$ by $Y_\nu$. 
Let $\langle  , \rangle$ be a hermitian form on $Y_\nu$, for $Y_\nu$ to be unitary we require 
$$\langle s(u), v \rangle = \langle u, s^*(v)\rangle = \langle u,s(v)\rangle$$ and 
$$\langle \epsilon(u), v \rangle = \langle u ,\epsilon^*(v)\rangle = \langle u,[ -\epsilon + c s](v)\rangle.$$
Letting $u = \mathbbm{1}_\nu$ and $v = \mathbbm{1}_\nu$, then the above requirement implies 
$$\nu \langle \mathbbm{1}_\nu, \mathbbm{1}_\nu \rangle = \langle \epsilon(\mathbbm{1}_\nu) ,\mathbbm{1}_\nu\rangle =\langle \mathbbm{1}_\nu, [-\epsilon + c s]\mathbbm{1}_\nu\rangle = -\bar{\nu} \langle \mathbbm{1}_\nu, \mathbbm{1}_\nu \rangle + \langle \mathbbm{1}_\nu, s\mathbbm{1}_\nu\rangle.$$
For the above equation to hold $\nu =- \bar{\nu}$ and $\langle \mathbbm{1}_\nu,s\mathbbm{1}_\nu\rangle = 0$. Hence for $Y_\nu$ to be unitary $\nu$ must be purely imaginary. Furthermore if $\nu$ is purely imaginary then we can construct a Hermitian non-degenerate form on $Y_\nu$ such that it is a unitary form. Therefore in the case of $Sp_2(\mathbb{R})$ our non-unitary test becomes an equivalence: 
$$X_\delta^\nu \text{ is unitary if and only if } F_{\triv,1}(X_\delta^\nu) \cong \Ind_{\mathbb{C}}^{\mathbb{H}(c)} \mathbbm{1}_\nu \text{ is unitary.}$$
\end{example}

\end{section}

\addcontentsline{toc}{chapter}{Bibliography}
\bibliography{/home/k/google_drive/Dphil_work/bib.bib}        
\bibliographystyle{abbrv}  

\end{document}